\documentclass[12pt, reqno]{amsart}
\usepackage{amssymb, amsmath}
\usepackage{amsfonts}
\usepackage{color, dsfont}

\setlength{\textwidth}{15.7cm}
\setlength{\textheight}{53.2pc}
\setlength{\evensidemargin}{.2cm}
\setlength{\oddsidemargin}{.2cm}

\newtheorem{thm}{Theorem}[section]
\newtheorem{defn}[thm]{Definition}
\newtheorem{cor}[thm]{Corollary}
\newtheorem{lem}[thm]{Lemma}
\newtheorem{prop}[thm]{Proposition}
\newtheorem{hyp}[thm]{Premise}
\theoremstyle{remark}
\newtheorem{rem}[thm]{Remark}
\newtheorem{eg}[thm]{Example}

\newcommand{\norm}[1]{\left\lVert#1\right\rVert}
\newcommand{\abs}[1]{\left\lvert#1\right\rvert}
\newcommand{\inp}[1]{\left\langle#1\right\rangle}
\newcommand{\roo}[2]{|#1\rangle \langle#2|}
\newcommand{\sg}[1]{(#1(t))_{t \geq 0}}
\newcommand{\N}{\mathbb{N}}
\newcommand{\R}{\mathbb{R}}

\newcommand{\C}{\mathbb{C}}

\newcommand{\Vt}{\tilde{V}}
\newcommand{\Hb}{\mathfrak{H}}
\newcommand{\Tf}{\mathfrak{T}(\Hb)}
\newcommand{\Tsa}{\mathfrak{T}_s(\Hb)}
\newcommand{\Tc}{\mathcal{T}}
\newcommand{\SC}{\mathcal{S}}
\newcommand{\Dc}{\mathcal{D}}
\newcommand{\K}{\mathcal{K}}
\newcommand{\lop}{\mathcal{L}}
\newcommand{\loph}{\mathcal{L}(\Hb)}

\newcommand{\A}{\mathcal{A}}

\newcommand{\G}{\mathcal{G}}
\newcommand{\V}{\mathcal{V}}
\newcommand{\Img}{\operatorname{Im}}

\newcommand{\wslim}{\operatorname{w^*-lim}}
\newcommand{\Real}{\operatorname{Re}}
\newcommand{\x}{\times}
\newcommand{\ov}{\overline}
\newcommand{\ld}{\lambda}
\newcommand{\Ld}{\Lambda}
\newcommand{\ap}{\alpha}
\newcommand{\sm}{\sigma}
\newcommand{\ep}{\epsilon}
\newcommand{\One}{\mathds{1}}
\newcommand{\ab}{{\bar a}}

\newcommand{\Dl}{\Delta_\ld}
\newcommand{\BR}{BR(\ld, A)}
\newcommand{\Ups}{\Upsilon}
\newcommand{\Sp}{\operatorname{Span}}
\newcommand{\Tr}{\operatorname{Tr}}
\newcommand{\ruv}{\roo{u}{v}}
\newcommand{\St}{\tilde{S}}
\newcommand{\Gt}{\tilde{G}}

\numberwithin{equation}{section}

\begin{document}
\setcounter{section}{0}
\setcounter{subsection}{0}

\title[New Approaches to Honesty and Applications to QDS]{New Approaches to Honesty Theory and Applications in Quantum Dynamical Semigroups}
\author{Chin Pin Wong}
\address{Mathematical Institute, University of Oxford, Andrew Wiles Building, Radcliffe Observatory Quarter, Woodstock Road, Oxford, OX2 6GG, UK}
\email{wong@maths.ox.ac.uk}
\keywords{substochastic semigroups, quantum dynamical semigroups, honesty, conservativity}
\subjclass[2010]{47D06, 47D07, 47N50, 81S25}
\maketitle


\begin{abstract} We prove some new characterisations of honesty of the perturbed semigroup in Kato's Perturbation Theorem on abstract state spaces via three approaches, namely mean ergodicity of operators, adjoint operators and uniqueness of the perturbed semigroup. We then apply Kato's Theorem on abstract state spaces and the honesty theory linked to it to the study of quantum dynamical semigroups. We show that honesty is the natural generalisation of the notion of conservativity.
\end{abstract}

\section{Introduction}
This paper originates from a perturbation theorem for substochastic semigroups (positive semigroups which are contractive on the positive cone) which is known as Kato's Perturbation Theorem \cite{K, ALM}. The main idea in Kato's original work in \cite{K} tells us that if $A$ is the generator of a substochastic semigroup on $L^1$ and $B$ is a positive operator satisfying certain conditions, then there is an extension $G$ of $A+B$ that generates a perturbed substochastic semigroup. Although this theorem is useful as a generation result, with applications in various problems such as birth and death problems, fragmentation problems \cite{K, AB04} and transport equations \cite{A91, V87}, (see \cite[Chapters 7-10]{AB06} for a survey of the results), our interest in this theorem lies mainly in the honesty theory derived from it. 

Honesty is a property of the perturbed semigroup in Kato's Theorem. We will give the precise technical definition of honesty in Section \ref{SS-K-ASS}; for now, it suffices to think of honesty theory as the study of the consistency between the perturbed semigroup and the system it describes in the following sense: A substochastic semigroup on $L^1$ is often used to model the time evolution of some quantity. The nature of the modelled process often requires that the described quantity should be preserved, i.e.\ the semigroup describing the evolution is conservative (stochastic). However, in some cases, the semigroup turns out not to be conservative even though the modelled system should have this property. This phenomenon is what we will call \emph{dishonesty}. For a system modelled by a strictly substochastic semigroup, we have a loss term representing the loss due to the system. Dishonesty in this case would mean that the described quantity is lost from the system faster than predicted by the loss term. 

Apart from consistency with the system, honesty of the semigroup is also interesting from a purely mathematical point of view. It is a well-known result \cite[Theorem 6.13]{AB06}, \cite[Remark 1.7]{MK-V}, \cite[Theorem 2.1]{ALM} that honesty of the semigroup is equivalent to its generator $G$ being precisely equal to $\ov{A+B}$ (Kato's Theorem tells us $G \supseteq A+B$). Thus honesty characterises when a core of $A+B$ is also a core of $G$.

The honesty of the perturbed semigroup in Kato's Theorem in $L^1$ has been extensively studied, with results going back to Kato's seminal paper \cite{K} where Kato studied the stochasticity of the perturbed semigroup on $\ell^1$. Other early results include \cite{V87} and \cite{A91}. More recently, Voigt and Mokhtar-Kharroubi in \cite{MK-V}, introduced a more systematic approach to studying the problem on $L^1$, that is, via functionals involving the resolvents of generators. Note that an important area in the study of honesty involves identifying characterisations of honesty which can then be used to determine if a semigroup in a given setting is honest.  This follows naturally since the honest semigroups are the consistent semigroups. 

In this paper, we are interested in a recent generalisation of Kato's Theorem to abstract state spaces and the honesty theory linked to it. Abstract state spaces are real ordered Banach spaces with norm additive on its generating positive cone. Examples of such spaces include preduals of von Neumann algebras, or more generally, subspaces of duals of C$^*$-algebras. This generalisation was proven by Arlotti, Lods and Mokhtar-Kharroubi in \cite[Theorem 2.1]{ALM} (see also the special case for the space of trace class operators in \cite{MK08}). More significantly, in \cite[Section 3]{ALM}, the authors also showed that earlier results in the honesty theory of Kato's Theorem on $L^1$ hold for the case of abstract state spaces as well by generalising the functional approach involving resolvents. Furthermore, they introduced an alternative approach to honesty using functionals which are defined using the Dyson-Phillips series representation of the semigroup \cite[Section 4]{ALM}.

In Sections \ref{S-NewHT} and \ref{S-HonPot}, we will introduce some new approaches to the study of honesty in abstract state spaces. In particular we will present three new approaches to characterising honesty, namely a mean ergodic approach, an approach involving adjoint operators and finally, an approach involving uniqueness of the perturbed semigroup in Kato's Theorem. The approach involving adjoint operators will turn out important when considering the adjoint semigroup in applications as we will see in Section \ref{S-HTQDS} while the approach involving uniqueness of the perturbed semigroup in tied to classical questions about the uniqueness of solutions to Kolmogorov differential equations. Finally, we will also investigate the preservation of honesty under the addition of a potential term. 

In the second half of this paper, we will investigate an application of Kato's Theorem on abstract state spaces in the study of quantum dynamical semigroups. The development of Kato's original theorem in \cite{K} was inspired by the study of classical Kolmogorov differential equations on $\ell^1$, which are in turn linked to the study of stochastic processes. The non-commutative analogue of stochastic processes is linked to the study of quantum mechanics and is known as the study of quantum stochastic processes or quantum flows. The counterpart to a Markov process in the classical setting is a quantum Markov process while the corresponding semigroups are known as quantum Markov semigroups or quantum dynamical semigroups. The extension of Kato's Theorem to abstract state spaces allows us to apply this theorem to this non-commutative setting as we will demonstrate in Sections \ref{S-KQDS} and \ref{S-HTQDS}. 

The link between Kato's Theorem (the original $L^1$ version) and quantum dynamical semigroups has been known since the 1970s, when Davies in \cite{D77} showed that the techniques used in Kato's paper \cite{K} could be used to show the existence of a quantum dynamical semigroup satisfying certain conditions. 
More precisely, the application of Kato's Theorem is restricted to the special class of quantum dynamical semigroups whose generators can be represented in Lindblad form (see Definition \ref{DefLind}). Although the relation to Kato's Theorem has been noted, the actual application of Kato's Theorem in abstract state spaces to quantum dynamical semigroups has yet to be written up in the literature. We will do so in Section \ref{S-KQDS} as this setting will be utilised in Section \ref{S-HTQDS}. However, it should be noted that the theory of quantum dynamical semigroups has largely developed independently of Kato's Theorem, see for example \cite{C99, FC, FM, F99}.

The application of Kato's Theorem in constructing quantum dynamical semigroups leads naturally to questions about the role of honesty for these semigroups. Note that although the relation with Kato's Theorem has long been identified, the link to honesty has yet to be established as honesty theory was developed in a systematic manner much more recently. However, just like the commutative case on $L^1$, there is also a notion of conservativity of quantum dynamical semigroups that has been studied independently of honesty theory (see \cite{C99, FC, F99} for example) and we will in fact, show that honesty is a generalisation of this notion in Section \ref{S-HTQDS}. More precisely, we will show that previously known results for the conservative case regarding domain characterisation and uniqueness of the semigroup can be extended to the substochastic case via honesty theory. This abstract approach to conservativity yields sleeker proofs and new insights into quantum dynamical semigroups.

\section{Kato's Theorem and Honesty Theory in Abstract State Spaces}\label{SS-K-ASS}
Let us begin by clarifying some terminology. Suppose $X$ is an ordered Banach space with positive cone $X_+$. We will often use $\inp{\cdot,\cdot}$ to denote the duality between $X$ and its dual space $X^*$. If $T$ is a linear operator on $X$, we will use $T^*$ to denote its adjoint (dual) operator. We say that the linear operator $T$ is positive if $Tu \in X_+$ for all $u \in X_+$. $T$ will be called substochastic (resp.\ stochastic) if $T$ is positive and $\norm{Tx} \leq \norm{x}$ (resp.\ $\norm{Tx} = \norm{x}$) for all $x \in X_+$. A one-parameter semigroup of operators in $X$ will be called substochastic (resp.\ stochastic) if $U(t)$ is substochastic (resp.\ stochastic) for all $t \geq 0$.

In this section, we will introduce Kato's Theorem and the honesty theory related to it. We will be interested solely in the theory of honesty of Kato's Perturbation Theorem in abstract state spaces.

An abstract state space is a real ordered Banach space, $X$, with a generating positive cone, $X_+$, on which the norm is additive, i.e. $\norm{u+v} = \norm{u} + \norm{v}$ for all $u,v\in X_+$. The additivity of the norm ensures that the norm is monotone i.e.\ $\norm{u} \leq \norm{v}$ if $0 \leq u \leq v$. Moreover, the additivity of the norm and the generating cone allow us to extend the norm on the positive cone to a linear functional on $X$ given by
\begin{equation}\label{E-Psi}
\Psi:X \to \R, \quad \inp{\Psi,u} = \norm{u},\quad u \in X_+.
\end{equation}  
For these spaces, Arlotti et al.\ proved the following generalisation of Kato's Perturbation Theorem. 
\begin{thm}\label{K-Ass} \cite[Theorem 2.1]{ALM} Suppose $X$ is an abstract state space and the operators $A$ and $B$ with $D(A) \subseteq D(B) \subseteq X$ satisfy:
\begin{enumerate}
\item $A$ generates a substochastic semigroup $(U_A(t))_{t \geq 0}$,
\item $Bu \geq 0$ for $u \in D(A)_+$,
\item\label{E-MCond} $\inp{\Psi, (A+B)u} \leq 0$ for all $u \in D(A)_+ = D(A) \cap X_+$.
\end{enumerate}
Then there exists an extension $G$ of $A+B$ that generates a substochastic $C_0$-semigroup $(V(t))_{t \geq 0}$ on $X$. The generator $G$ satisfies, for all $\ld>0$ and $u \in X$,
\begin{equation*}
R(\ld,G)u = R(\ld,A)\sum_{k=0}^\infty (BR(\ld,A))^ku.
\end{equation*}

Moreover, $(V(t))_{t \geq 0}$ is the minimal substochastic $C_0$-semigroup whose generator is an extension of $(A+B)$.
\end{thm}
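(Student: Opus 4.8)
The plan is to realise the claimed series as the resolvent of the sought generator $G$ and then recover the semigroup from it by the Hille--Yosida theorem. Write $C_\ld := BR(\ld,A)$; by hypothesis~(i) the operator $R(\ld,A)$ is positive with $\inp{\Psi,R(\ld,A)f}=\norm{R(\ld,A)f}\le \ld^{-1}\norm{f}$ for $f\in X_+$, and by~(ii) each $C_\ld$ maps $X_+$ into $X_+$. The fundamental estimate comes from condition~(iii): for $f\in X_+$ set $u=R(\ld,A)f\in D(A)_+$, so that $Au=\ld u-f$, and then
\begin{equation*}
\inp{\Psi,C_\ld f}=\inp{\Psi,Bu}\le -\inp{\Psi,Au}=\inp{\Psi,f}-\ld\inp{\Psi,R(\ld,A)f}.
\end{equation*}
Iterating this with $f$ replaced by $C_\ld^k f$ and summing a telescoping bound, I would obtain, for every $f\in X_+$ and every $n$, the key inequality $\ld\sum_{k=0}^{n}\inp{\Psi,R(\ld,A)C_\ld^k f}\le\norm{f}$, using that $\inp{\Psi,C_\ld^{n+1}f}\ge0$.

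From here convergence of the series is the first real point. Since every partial sum $S_n f:=\sum_{k=0}^n R(\ld,A)C_\ld^k f$ lies in $X_+$, the difference $S_m f-S_n f$ ($m>n$) is also positive, so its norm equals $\inp{\Psi,S_m f-S_n f}$, which is a tail of the convergent real series controlled by the key inequality; hence $(S_n f)$ is Cauchy and, by completeness of $X$, converges. Because $X_+$ is generating and the norm is additive on it, this defines a bounded positive operator $R(\ld):=\sum_{k=0}^\infty R(\ld,A)C_\ld^k$ on all of $X$ with $\norm{\ld R(\ld)}\le1$ (the contraction bound on $X_+$ extends to $X$ via the infimal decomposition norm). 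I would then verify the resolvent identity directly from the series and the resolvent identity for $R(\ld,A)$, so that $R(\ld)$ is a pseudoresolvent; injectivity and dense range follow from showing $\ld R(\ld)x\to x$ as $\ld\to\infty$, which in turn follows by isolating the $k=0$ term $\ld R(\ld,A)f\to f$ and using that the remainder is positive with $\Psi$-value $\le\norm{f}-\ld\inp{\Psi,R(\ld,A)f}\to0$. Thus $R(\ld)=R(\ld,G)$ for a densely defined closed $G$, and the contraction bound lets the Hille--Yosida generation theorem produce a $C_0$-contraction semigroup $(V(t))_{t\ge0}$; positivity of $R(\ld)$ passes to $V(t)$ through the exponential formula, giving substochasticity.

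To see $G\supseteq A+B$, I would take $g\in X_+$, set $u=R(\ld,A)g\in D(A)_+$, and compute the telescoping sum $R(\ld)(I-C_\ld)g=R(\ld,A)\bigl(g-\lim_N C_\ld^{N+1}g\bigr)$; the tail estimate forces $\norm{R(\ld,A)C_\ld^{N+1}g}=\inp{\Psi,R(\ld,A)C_\ld^{N+1}g}\to0$, so $R(\ld)(\ld-A-B)u=R(\ld,A)g=u$. Since $\operatorname{Span}(R(\ld,A)X_+)=R(\ld,A)X=D(A)$, linearity extends this to all $u\in D(A)$, giving $Gu=(A+B)u$. For minimality, I would let $(\Vt(t))_{t\ge0}$ be any substochastic $C_0$-semigroup whose generator $\Gt$ extends $A+B$; positivity of $R(\ld,\Gt)$ together with $R(\ld,\Gt)\ge R(\ld,A)+R(\ld,A)B\,R(\ld,\Gt)$ yields by induction $R(\ld,\Gt)\ge R(\ld,A)\sum_{k=0}^n C_\ld^k$ for all $n$, hence $R(\ld,\Gt)\ge R(\ld)$ on $X_+$, and monotonicity of the (inverse) Laplace transform upgrades this to $\Vt(t)\ge V(t)$ on $X_+$ for all $t$.

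The main obstacle I anticipate is the convergence step and the passage of the cone estimates to the whole space: the bound $\norm{\ld R(\ld)}\le1$ is only immediate on $X_+$, and turning it into genuine contractivity and dense range on $X$—so that Hille--Yosida applies—relies essentially on norm additivity and the generating cone. The minimality induction is the second delicate point, since it requires justifying the resolvent domination $R(\ld,\Gt)\ge R(\ld,A)+R(\ld,A)B\,R(\ld,\Gt)$ for the a priori larger generator $\Gt$, which needs a careful positivity argument rather than a direct computation.
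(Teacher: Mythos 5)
The paper does not prove this theorem itself --- it is quoted verbatim from \cite[Theorem 2.1]{ALM} --- so I am comparing your argument against the standard proof there (which runs Kato's original scheme: one first treats $A+rB$ for $r<1$ via a convergent Neumann series and then lets $r\uparrow 1$, rather than summing the series for $r=1$ directly). Your direct-series route is a legitimate and well-known alternative, and its core is sound: the telescoping estimate $\ld\sum_{k=0}^{n}\inp{\Psi,R(\ld,A)(BR(\ld,A))^{k}f}\le\norm{f}$ from condition (iii), monotone convergence of the partial sums on $X_+$ via norm additivity, injectivity and dense range of the pseudoresolvent from $\ld R(\ld)f\to f$, and the identity $R(\ld)(\ld-A-B)R(\ld,A)g=R(\ld,A)g$ giving $G\supseteq A+B$. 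One caveat you correctly flag but should make explicit: in a general abstract state space the decomposition norm is only \emph{equivalent} to the given norm, so what you get on all of $X$ is $\norm{(\ld R(\ld))^{n}}\le M$ uniformly in $n$ (each power being positive and contractive on the cone), which still suffices for Hille--Yosida; substochasticity is then recovered on the cone from the exponential formula.

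The one genuine defect is in the minimality step. The inequality you propose to iterate, $R(\ld,\Gt)\ge R(\ld,A)+R(\ld,A)B\,R(\ld,\Gt)$, puts the perturbation on the wrong side: $B R(\ld,\Gt)$ is not even well defined, since $R(\ld,\Gt)$ maps into $D(\Gt)$ and nothing forces $D(\Gt)\subseteq D(B)$; moreover substituting an operator inequality into $R(\ld,A)B[\,\cdot\,]$ would require positivity of $B$ off $D(A)_+$. The correct identity is
\begin{equation*}
R(\ld,\Gt)=R(\ld,A)+R(\ld,\Gt)\bigl[(\ld-A)-(\ld-\Gt)\bigr]R(\ld,A)=R(\ld,A)+R(\ld,\Gt)BR(\ld,A),
\end{equation*}
which uses only that $\Gt$ agrees with $A+B$ on $D(A)=R(\ld,A)X$ and that $BR(\ld,A)$ is everywhere defined and positive. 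Iterating it gives $R(\ld,\Gt)=\sum_{k=0}^{n}R(\ld,A)(BR(\ld,A))^{k}+R(\ld,\Gt)(BR(\ld,A))^{n+1}$, and dropping the positive remainder yields $R(\ld,\Gt)\ge R(\ld,G)$ on $X_+$; your passage from resolvent domination to $\Vt(t)\ge V(t)$ via the exponential formula is then fine. With that correction the proof goes through.
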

Henceforth, we will refer to Theorem \ref{K-Ass} as Kato's Theorem.

We will discuss two approaches to honesty theory of Kato's Theorem as they will be required in the subsequent sections. We begin by introducing the functional approach involving resolvent operators from \cite[Section 3]{ALM}. Consider again the operators from Theorem \ref{K-Ass}. We are interested in the functional \[a_0:D(G) \to \R, \qquad a_0(u)=-\inp{\Psi, Gu}.\] Since $\sg{V}$ is substochastic, for $u\in D(G)_+$, we have \[\inp{\Psi, Gu}= \lim_{t \to 0} t^{-1}\inp{\Psi, V(t)u-u} =\lim_{t \to 0} t^{-1}(\norm{V(t)u}-\norm{u}) \leq 0,\] so $a_0$ is positive on $D(G)$. Moreover, from the definition, it is easy to see that $a_0$ is continuous on $D(G)$ with respect to the graph norm. We denote the restriction of $a_0$ to $D(A)$ by $a$, i.e.
\begin{equation}\label{E-ab}
a_0|_{D(A)} = a:D(A) \to \R, \qquad a(u) = -\inp{\Psi, Au+Bu}.
\end{equation}       

We now use $a$ to define our second functional. Fix $\ld>0$ and $u \in X_+$. Since $R(\ld, A)$ and $BR(\ld,A)$ are positive operators, the sequence $R^{(n)}u:= \sum_{k=0}^n R(\ld, A)(BR(\ld, A))^k u$, $n \in \N$ is non-decreasing and in fact, converges to $R(\ld, G)u$. Therefore, we have $a(R^{(n)} u) = a_0(R^{(n)} u) \leq a_0(R(\ld, G)u)$ for all $n \in \N$, i.e.\ $(a(R^{(n)} u))_n$ is a bounded, monotone real sequence, which must then be convergent. Taking $u=u_+ -u_- \in X$, $u_+, u_- \in X_+$, we see that this convergence holds for any $u \in X$. Therefore, we can define a new functional on $D(G)$ by
\[\ab_\ld(R(\ld, G)u) = \sum_{k=0}^\infty a(R(\ld, A)(BR(\ld, A))^k u).\]

It can be shown \cite[Proposition 3.1]{ALM} that $\ab_\ld|_{D(A)} = a$ and that the definition of $\ab_\ld$ is independent of $\ld$. Thus we define $\ab:= \ab_\ld$. From the inequality $a(R^{(n)} u) \leq a_0(R(\ld, G)u)$ for $u \in X_+$, it follows that $\ab(R(\ld,G)u) \leq  a_0(R(\ld,G)u)$. Hence, $\ab$ is continuous on $D(G)$ with respect to the graph norm.

These two functionals now allow us to define a positive functional, $\Dl \in X^*$ which will be key in characterising the honesty of the semigroup,
\begin{equation}\label{E-Dl}
\inp{\Dl,u} = a_0(R(\ld, G)u)-\ab(R(\ld, G)u),\quad u \in X.
\end{equation}

To see this, we need the technical definition of honesty as given in \cite{ALM}. To motivate the definition, consider the following: For any $u \in X_+$ and any $t \geq 0$, we have $\int_0^t V(s)u\,ds \in D(G)$ with
$V(t)u-u=G\int_0^t V(s)u\,ds.$
Since the semigroup is positive, we have
\begin{equation}\label{E-SgGen}
\norm{V(t)u}-\norm{u} = -a_0\left(\int_0^t V(s)u\,ds\right).
\end{equation}

We define honesty to be the following:
\begin{defn}\label{Def-HAss}\cite[Definition 3.8]{ALM} The perturbed semigroup $\sg{V}$ in Kato's Theorem is said to be honest if and only if 
\begin{equation}\label{E-SgHon}
\norm{V(t)u}-\norm{u} = -\ab\left(\int_0^t V(s)u\,ds\right) \qquad \text{ for all }t \geq 0, u \in X_+.
\end{equation}
Otherwise, the semigroup is said to be dishonest.
\end{defn}
\begin{rem}\label{HonStoc} Note that if equality holds in condition (iii) in Kato's Theorem, then $\ab=0$. Hence an honest semigroup in this case is simply a stochastic semigroup.
\end{rem}

Comparing \eqref{E-SgGen} and \eqref{E-SgHon}, we see that $(V(t))_{t \geq 0}$ is honest if and only if 
\begin{equation}\label{E-aabHon}
a_0\left(\int_0^t V(s)u\,ds\right) = \ab\left(\int_0^t V(s)u\,ds\right)\qquad \text{ for all }t \geq 0, u \in X_+.
\end{equation}
Further calculations (see for example \cite[Theorem 3.11]{ALM}) show that \eqref{E-aabHon} holds if and only if $a_0(R(\ld, G)u) =\ab(R(\ld, G)u)$ for some $\ld>0$. Therefore $\sg{V}$ is honest if and only if $\Dl = 0$, i.e. no loss occurs. This is precisely the equivalence of (i) and (ii) in Theorem \ref{HT-R}, which states some well-known characterisations of honesty. The result as stated below, can be derived from \cite[Theorem 3.5]{ALM} and Definition \ref{Def-HAss}.  
\begin{thm}\label{HT-R} Suppose $X$ is an abstract state space and $A, B, \sg{V}$ are as in Kato's Theorem and $\ld >0$. The following are equivalent.
\begin{enumerate}
\item $\sg{V}$ is honest. 
\item $\Dl=0$. 
\item $\lim_{n \to \infty}\norm{[BR(\ld, A)]^n u}=0$ for all $u \in X_+$.
\item $G=\ov{A+B}$.
\end{enumerate}
\end{thm}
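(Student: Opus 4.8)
The plan is to close the cycle (i)$\Leftrightarrow$(ii)$\Leftrightarrow$(iii)$\Rightarrow$(iv)$\Rightarrow$(ii), treating (i)$\Leftrightarrow$(ii) as already settled by the discussion preceding the statement (honesty is equivalent to $\Dl=0$). The computational heart of the argument is an explicit formula for $\Dl$ on the positive cone. Fix $\ld>0$, $u\in X_+$, and set $K:=BR(\ld,A)$ and $w_k:=K^k u\in X_+$, so $w_{k+1}=Kw_k$. Since $a(v)=-\inp{\Psi,(A+B)v}$, the resolvent identity $AR(\ld,A)=\ld R(\ld,A)-I$ gives
\[ a(R(\ld,A)w_k)=\inp{\Psi,w_k}-\ld\inp{\Psi,R(\ld,A)w_k}-\inp{\Psi,w_{k+1}}. \]
Summing over $k=0,\dots,N$, the first and third terms telescope, leaving $\inp{\Psi,u}-\inp{\Psi,w_{N+1}}-\ld\inp{\Psi,R^{(N)}u}$, where $R^{(N)}u=\sum_{k=0}^N R(\ld,A)w_k\to R(\ld,G)u$. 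Letting $N\to\infty$, the left side converges by the definition of $\ab$ and $\ld\inp{\Psi,R^{(N)}u}\to\ld\inp{\Psi,R(\ld,G)u}$, so $\inp{\Psi,w_{N+1}}=\norm{w_{N+1}}$ converges as well, and
\[ \ab(R(\ld,G)u)=\inp{\Psi,u}-\lim_{n\to\infty}\norm{K^n u}-\ld\inp{\Psi,R(\ld,G)u}. \]
The same resolvent manipulation applied to $a_0$, via $GR(\ld,G)=\ld R(\ld,G)-I$, gives $a_0(R(\ld,G)u)=\inp{\Psi,u}-\ld\inp{\Psi,R(\ld,G)u}$. Subtracting yields the key identity
\[ \inp{\Dl,u}=\lim_{n\to\infty}\norm{(BR(\ld,A))^n u},\qquad u\in X_+. \]
Because $\Dl$ is positive and $X_+$ generates $X$, the condition $\Dl=0$ holds if and only if this limit vanishes for every $u\in X_+$; this is precisely (ii)$\Leftrightarrow$(iii).

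For (iii)$\Rightarrow$(iv) I would show that $D(A)$ is a core for $G$ directly from the partial sums. For $u\in X_+$ the vectors $R^{(N)}u$ lie in $D(A)=D(A+B)$ and converge to $R(\ld,G)u$, while $(\ld-A)R(\ld,A)=I$ yields the telescoping identity
\[ (\ld-A-B)R^{(N)}u=(I-K)\sum_{k=0}^N K^k u=u-K^{N+1}u, \]
which converges to $u$ exactly by (iii). Hence $R(\ld,G)u\in D(\ov{A+B})$ with $\ov{A+B}\,R(\ld,G)u=GR(\ld,G)u$; since $X_+$ generates $X$ this forces $D(G)=R(\ld,G)X\subseteq D(\ov{A+B})$, and as $\ov{A+B}\subseteq G$ always holds by Kato's Theorem, we conclude $G=\ov{A+B}$.

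For (iv)$\Rightarrow$(ii) I would use graph-norm continuity. Since $\ab|_{D(A)}=a=a_0|_{D(A)}$, every $v\in D(A)$ satisfies $\inp{\Dl,(\ld-G)v}=a_0(v)-\ab(v)=0$. If $G=\ov{A+B}$ then $D(A)$ is a core for $G$, so each $w\in D(G)$ is a graph-norm limit of vectors in $D(A)$; as both $a_0$ and $\ab$ are continuous on $D(G)$ in the graph norm, the functional $v\mapsto a_0(v)-\ab(v)=\inp{\Dl,(\ld-G)v}$ is graph-norm continuous and hence vanishes on all of $D(G)$. Since $\ld-G$ maps $D(G)$ onto $X$, this gives $\Dl=0$, completing the cycle.

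The two resolvent/telescoping computations are routine; the delicate points I expect are, first, justifying the passage to the limit in the formula for $\Dl$ — in particular deducing the convergence of $\norm{w_{N+1}}$ from that of $R^{(N)}u$ and of the series defining $\ab$, rather than assuming it — and, second, the structural input to (iv)$\Rightarrow$(ii), where it is precisely the identity $\ab|_{D(A)}=a$ together with the graph-norm continuity of both functionals that makes the vanishing of $a_0-\ab$ on $D(A)$ survive the passage to the graph-norm limit along a core. Everything else reduces to positivity and the additivity $\inp{\Psi,w}=\norm{w}$ on $X_+$.
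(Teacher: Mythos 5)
Your proof is correct, and it takes essentially the route the paper relies on: the telescoping resolvent computation yielding $\inp{\Dl,u}=\lim_{n\to\infty}\norm{(BR(\ld,A))^nu}$ for $u\in X_+$ is precisely the content of the cited \cite[Theorem 3.5]{ALM}, from which (ii)$\Leftrightarrow$(iii) is immediate, and your core/graph-norm arguments for (iii)$\Rightarrow$(iv) and (iv)$\Rightarrow$(ii) are the standard completions. The only difference is that you supply in full the details the paper outsources to the reference, including the correct justification that convergence of $\norm{K^{N+1}u}$ is deduced from, not assumed alongside, the convergence of the partial sums defining $\ab$.
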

The second approach to honesty we will utilise is a spectral approach. The spectral characterisation of honesty is based on Theorem \ref{HTSpect} which is a general result that is not merely restricted to the operators $A,B$ satisfying the conditions of Kato's Theorem.
\begin{thm}\label{HTSpect}\cite[Theorem 4.3]{AB06} Let $A,B$ be linear operators on a Banach space $X$ with $D(A) \subseteq D(B)$. Suppose $A+B$ has a 
extension $G$ and $\Ld:=\rho(A) \cap \rho(G) \neq \emptyset$. Then
\begin{enumerate}
\item $1 \notin \sm_p(BR(\ld, A))$ for any $\ld \in \Ld$.
\item $1 \in \rho(BR(\ld, A))$ for some/all $\ld \in \Ld$ if and only if $D(G) = D(A)$ and $G = A+B$.
\item $1 \in \sm_c(BR(\ld, A))$ for some/all $\ld \in \Ld$ if and only if $D(G) \supsetneq D(A)$ and $G = \ov{A+B}$.
\item $1 \in \sm_r(BR(\ld, A))$ for some/all $\ld \in \Ld$ if and only if $G \supsetneq \ov{A+B}$.
\end{enumerate}
\end{thm}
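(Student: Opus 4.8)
The plan is to reduce everything to the single bounded operator $BR(\ld,A)$ and one resolvent factorisation. Fix $\ld \in \Ld$. For $u \in D(A)$ we have $Bu = BR(\ld,A)(\ld - A)u$, so, since $G$ extends $A+B$,
\[ (\ld - G)u = (\ld - (A+B))u = (I - BR(\ld,A))(\ld - A)u, \qquad u \in D(A). \]
Because $\ld - A : D(A) \to X$ is a bijection with bounded inverse $R(\ld, A)$ and $\ld - G : D(G) \to X$ is a bijection with bounded inverse $R(\ld, G)$, applying $R(\ld, G)$ and substituting $v = (\ld-A)u$ yields the identity
\[ R(\ld, A) = R(\ld, G)\,(I - BR(\ld,A)) \quad \text{on } X. \]
Everything is then read off from this identity together with the injectivity of $R(\ld, A)$ and $R(\ld, G)$ and the facts $\Img R(\ld, A) = D(A)$ and $\Img R(\ld, G) = D(G)$.

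For (i), if $(I - BR(\ld,A))v = 0$ then $R(\ld,A)v = 0$ by the identity, hence $v = 0$; so $I - BR(\ld,A)$ is injective, i.e.\ $1 \notin \sm_p(BR(\ld,A))$. For (ii), the identity gives $R(\ld,G)\big(\Img(I - BR(\ld,A))\big) = \Img R(\ld, A) = D(A)$, while $R(\ld, G)X = D(G)$; since $R(\ld, G)$ is injective, $I - BR(\ld,A)$ is surjective iff $D(A) = D(G)$. Combined with injectivity (and the open mapping theorem, which makes the inverse automatically bounded), this says $1 \in \rho(BR(\ld,A))$ iff $D(A) = D(G)$, in which case $G = A+B$ because $G$ extends $A+B$ and the two share a domain.

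For (iii) and (iv) I must detect, within the non-surjective case, whether $\Img(I - BR(\ld,A))$ is dense. The crux is the claim
\[ \ov{\Img(I - BR(\ld,A))} = \Img(\ld - \ov{A+B}). \]
The inclusion $\supseteq$ is routine: for $u \in D(\ov{A+B})$, an approximating sequence $u_n \in D(A)$ gives $(I - BR(\ld,A))(\ld - A)u_n = (\ld - (A+B))u_n \to (\ld - \ov{A+B})u$, placing $\Img(\ld-\ov{A+B})$ inside the closure of $\Img(I-BR(\ld,A))$. The reverse inclusion is where the real work lies: given $y = \lim (I - BR(\ld,A))v_n$, put $u_n := R(\ld,A)v_n \in D(A)$, so $(\ld - G)u_n = (I - BR(\ld,A))v_n \to y$; then boundedness of $R(\ld, G)$ forces $u_n = R(\ld, G)(\ld-G)u_n \to R(\ld,G)y =: u$, and since simultaneously $u_n \to u$ and $(\ld-(A+B))u_n \to y$, we get $u \in D(\ov{A+B})$ with $(\ld - \ov{A+B})u = y$. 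This use of the continuity of $R(\ld, G)$ to upgrade convergence of the images to convergence of the preimages is the main obstacle, and the one step that genuinely exploits $\ld \in \rho(G)$.

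Granting the claim, $\Img(\ld - \ov{A+B}) = (\ld - G)D(\ov{A+B})$ since $\ov{A+B} \subseteq G$, and applying the bijection $R(\ld, G)$ shows $\Img(I - BR(\ld,A))$ is dense iff $D(\ov{A+B}) = D(G)$ iff $G = \ov{A+B}$. Thus in the non-surjective case ($D(A) \subsetneq D(G)$), density of the range corresponds to $1 \in \sm_c(BR(\ld,A))$ and to $G = \ov{A+B}$, giving (iii), while non-density corresponds to $1 \in \sm_r(BR(\ld,A))$ and to $G \supsetneq \ov{A+B}$, giving (iv). Finally, the right-hand conditions in (ii)--(iv) make no reference to $\ld$, so each holds for some $\ld \in \Ld$ iff it holds for all; and since $1 \notin \sm_p(BR(\ld,A))$ always, exactly one of $\rho, \sm_c, \sm_r$ occurs, matching the mutually exclusive and exhaustive trichotomy $D(A) = D(G)$, $\;D(A) \subsetneq D(G) = D(\ov{A+B})$, $\;D(\ov{A+B}) \subsetneq D(G)$, which completes the ``some/all'' equivalences.
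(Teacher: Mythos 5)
Your proof is correct. There is nothing in the paper to compare it against: Theorem \ref{HTSpect} is imported verbatim from \cite[Theorem 4.3]{AB06} and the paper gives no proof, and your argument is essentially the standard one behind that reference, resting on the factorisation $(\lambda-(A+B))u=(I-BR(\lambda,A))(\lambda-A)u$ for $u\in D(A)$ and the resulting identity $R(\lambda,A)=R(\lambda,G)(I-BR(\lambda,A))$ on $X$. Two hypotheses you use silently are worth stating: $A+B$ is closable because it is contained in the closed operator $G$ (needed for $\ov{A+B}$ and for the limit step in your identification of $\ov{\Img(I-BR(\lambda,A))}$ with $\Img(\lambda-\ov{A+B})$), and $BR(\lambda,A)$ should be closed (e.g.\ bounded, as it is wherever the paper invokes the theorem) for the trichotomy $\rho/\sigma_c/\sigma_r$ and the open-mapping step in (ii) to be legitimate.
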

We can use Theorem \ref{HTSpect} to obtain some characterisations of honest semigroups. Henceforth, we will let $A, B$ denote the operators in Kato's Theorem unless stated otherwise. We need a simple but useful observation which follows from the equality $(\ld-A-B)R(\ld, A) = I-\BR$ and the fact that $\ker(T^*)$ is the annihilator of $\Img(T)$ for a densely defined linear operator $T$.
\begin{lem}\label{BddImg}\cite[Lemma 3.1]{TK} For all $\ld>0$, $\Img(\ld-A-B) = \Img(I-\BR)$ and $\ker(\ld-(A+B)^*)= \ker(I-(\BR)^*)$.
\end{lem}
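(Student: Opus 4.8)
The statement is essentially an exercise in domain bookkeeping combined with the standard duality between ranges and kernels of adjoints, and the sentence preceding the lemma already indicates the route. The plan is to first establish the operator identity $(\ld - A - B)R(\ld, A) = I - \BR$ as an equality of everywhere-defined operators on $X$, then transfer it to the two range statements, and finally dualise to obtain the kernel statement.

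First I would check the identity carefully at the level of domains. Since $R(\ld, A)$ maps $X$ bijectively onto $D(A)$ and $D(A) \subseteq D(B)$, the composition $\BR$ is defined on all of $X$; moreover it is bounded, as it is one of the terms in the convergent series of Kato's Theorem. The operator $\ld - A - B$ has domain $D(A) \cap D(B) = D(A)$. Writing $(\ld - A)R(\ld, A) = I$ and subtracting $\BR$ gives $(\ld - A - B)R(\ld, A) = I - \BR$, valid on all of $X$. For the range equality I would then apply both sides to $X$: because $R(\ld, A)$ is a bijection of $X$ onto $D(A)$, the left-hand side sweeps out $(\ld - A - B)(D(A)) = \Img(\ld - A - B)$, while the right-hand side yields $\Img(I - \BR)$, so the two ranges coincide.

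For the kernel equality I would invoke the fact quoted in the hint: for a densely defined operator $T$ one has $\ker(T^*) = \Img(T)^\perp$, the annihilator of the range. This applies to $T = \ld - A - B$, which is densely defined since $A$ generates a $C_0$-semigroup and hence $D(A)$ is dense, and to the bounded operator $T = I - \BR$. Using $(\ld - A - B)^* = \ld - (A+B)^*$ and $(I - \BR)^* = I - (\BR)^*$ together with the range equality just established, the two annihilators coincide, which gives $\ker(\ld - (A+B)^*) = \ker(I - (\BR)^*)$.

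The only point requiring any care — rather than a genuine obstacle — is ensuring that $\ld - A - B$ is densely defined and that $\BR$ is everywhere defined and bounded, so that both adjoints and the annihilator identity are legitimate; these follow respectively from the density of $D(A)$, the inclusion $D(A) \subseteq D(B)$, and the convergence of the series in Kato's Theorem.
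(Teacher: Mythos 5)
Your argument is correct and follows exactly the route the paper indicates in the sentence preceding the lemma: the everywhere-defined identity $(\ld-A-B)R(\ld,A)=I-BR(\ld,A)$ combined with the bijectivity of $R(\ld,A)$ from $X$ onto $D(A)$ gives the range equality, and the annihilator relation $\ker(T^*)=\Img(T)^\perp$ for densely defined $T$ gives the kernel equality. The domain bookkeeping you supply (density of $D(A)$, $D(A)\subseteq D(B)$, boundedness of $BR(\ld,A)$) is exactly what is needed and matches the paper's cited proof.
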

From Theorems \ref{HT-R} and \ref{HTSpect}, we have that $\sg{V}$ is honest if and only if $1 \in \rho(BR(\ld, A)) \cup \sm_c(BR(\ld, A))$  for some $\ld>0$, which holds if and only if $\Img(I-\BR)$ is dense in $X$. Combining this with Lemma \ref{BddImg}, we have:

\begin{prop}\label{H-dense} Let $X$ be an abstract state space and $A,B$, $\sg{V}$ be as in Theorem \ref{K-Ass}. The semigroup $\sg{V}$ is honest if and only if either of the following equivalent conditions hold:
\begin{enumerate}
\item $\Img(\ld-(A+B))=\Img(I-\BR)$ is dense in $X$ for some/all $\ld > 0$. 
\item $\ker(\ld-(A+B)^*) =\ker(I-(\BR)^*) = \{0 \}$ for some/all $\ld>0$.	
\end{enumerate}
\end{prop}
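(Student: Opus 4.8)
The plan is to obtain the proposition as a corollary of the spectral characterisation already assembled in the text, using the identification of honesty with the condition $G = \ov{A+B}$. First I would record that Theorem \ref{HTSpect} is applicable here: Kato's Theorem exhibits the resolvents $R(\ld, A)$ and $R(\ld, G)$ for every $\ld > 0$, so $(0,\infty) \subseteq \rho(A) \cap \rho(G) = \Ld$, and it is precisely the availability of \emph{every} $\ld>0$ that will produce the ``some/all'' quantifiers in the statement.

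Next, combining Theorem \ref{HT-R} (honesty is equivalent to $G = \ov{A+B}$) with Theorem \ref{HTSpect}, I would observe that $G = \ov{A+B}$ corresponds exactly to cases (ii) and (iii) of the latter: in case (ii) one has $G = A+B$, which is then closed since $G$ is a generator, so $G = \ov{A+B}$; in case (iii) this equality is asserted directly. Thus honesty is equivalent to $1 \in \rho(\BR) \cup \sm_c(\BR)$ for some (hence all) $\ld>0$. Since Theorem \ref{HTSpect}(i) rules out $1 \in \sm_p(\BR)$ and the spectrum partitions as $\rho \sqcup \sm_p \sqcup \sm_c \sqcup \sm_r$, this is the same as saying $1 \notin \sm_r(\BR)$.

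The crux is then to convert this spectral condition into the range condition (i). Because $1 \notin \sm_p(\BR)$, the operator $I - \BR$ is injective for every $\ld>0$, so the only distinction between $1 \in \rho \cup \sm_c$ and $1 \in \sm_r$ lies in whether $\Img(I - \BR)$ is dense: in the former case it is dense (equal to $X$ when $1 \in \rho$, dense but proper when $1 \in \sm_c$), whereas $1 \in \sm_r$ is exactly the injective-but-non-dense-range alternative. Applying Lemma \ref{BddImg} to replace $\Img(I - \BR)$ by $\Img(\ld - (A+B))$ then yields (i).

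Finally, for the passage from (i) to (ii) I would use the standard fact, already noted in the lead-in to Lemma \ref{BddImg}, that $\ker((I - \BR)^*)$ is the annihilator of $\Img(I - \BR)$; by the Hahn--Banach theorem the range is dense if and only if this annihilator is $\{0\}$, and Lemma \ref{BddImg} identifies $\ker((I - \BR)^*)$ with $\ker(\ld - (A+B)^*)$. I expect the only delicate point to be the careful matching of the four spectral alternatives of Theorem \ref{HTSpect} to the honesty condition and to the density dichotomy; once the spectrum is partitioned and $1 \notin \sm_p(\BR)$ is invoked, the remaining steps are routine functional-analytic bookkeeping.
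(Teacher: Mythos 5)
Your proposal is correct and follows essentially the same route as the paper: the paper also derives the proposition by combining Theorem \ref{HT-R} with Theorem \ref{HTSpect} to identify honesty with $1 \in \rho(\BR) \cup \sm_c(\BR)$, hence with density of $\Img(I-\BR)$, and then invokes Lemma \ref{BddImg} together with the annihilator relation for the kernel condition. Your write-up merely makes explicit a few steps (the partition of the spectrum and the Hahn--Banach duality) that the paper leaves implicit.
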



It is a well-known fact that when working with positive operators, it is generally sufficient to work in real spaces (see for example \cite[Example 2.87]{AB06}) as the theory on real spaces can be extended to complex spaces via the process of complexification. This statement applies to Kato's Theorem and honesty theory as well. However, it is sometimes more useful to apply the complex versions as we will see in Sections \ref{S-KQDS} and \ref{S-HTQDS}. So let us briefly elaborate on this. In the rest of this section, we let $X$ be an abstract state space and $A$, $B$, $G$ and $\sg{V}$ be as in Theorem \ref{K-Ass}. We will use $X_C$ to denote the complexification of $X$ 
and  $A_C$, $B_C$, $G_C$ and $\sg{V_C}$ to denote the respective complexified operators.
By using the complexified forms of the functionals $a_0$ and $\ab$, we can define honesty in the complexification of abstract state spaces in terms of complex functionals:
\begin{defn}\label{Def-HAssC} The semigroup $(V_C(t))_{t \geq 0}$ is said to be honest if and only if 
\begin{equation*}
\norm{V_C(t)u}_C-\norm{u}_C = -\ab_C\left(\int_0^t V_C(s)u\,ds\right) \qquad \text{ for all }t \geq 0, \text{all } u \in (X_C)_+.
\end{equation*}
Otherwise, the semigroup is said to be dishonest.
\end{defn}
Using this definition of complex honesty, it can be shown that real honesty is equivalent to complex honesty. Moreover, we can derive analogous characterisations for complex honesty from real honesty, for example, the conditions in Proposition \ref{H-dense} for the complex case would then hold for all $\ld \in \C_+$ where $\C_+:=\{\ld \in \C \::\: \Real(\ld) >0\}$.

\section{New Characterisations of Honesty Theory}\label{S-NewHT}
We will prove some new characterisations of honesty in abstract state spaces. 

\subsection{Honesty and Mean Ergodicity}\label{SS-HTME}
We begin by applying the Mean Ergodic Theorem to obtain a characterisation of honesty. The advantage of this approach is that it allows us to characterise not only when the semigroup is honest but also characterise the exact form of the generator when it is honest. More precisely, we can find conditions which differentiate when the generator $G=A+B$ and when $G= \ov{A+B}$. 

%

We begin by noting that the operator $\BR$ is substochastic and hence, power bounded. This follows from the inequality $\inp{\Psi, (A+B)R(\ld,A)u} =-\norm{u}+\norm{BR(\ld,A)u} +\ld \norm{ R(\ld,A)u} \leq 0$ for $u \in X_+$. Now we can prove the following characterisation of honesty:
\begin{thm}\label{HT-ME}
 Let $X$ be an abstract state space and suppose $A,B$, $\sg{V}$ are as in Theorem \ref{K-Ass}. Then the semigroup $\sg{V}$ is honest if and only if $\BR$ is mean
ergodic for some $\ld>0$. Moreover, the generator $G=A+B$ if and only if $\BR$ is uniformly ergodic. 
\end{thm}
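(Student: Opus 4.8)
The plan is to transfer the whole question to two classical results of ergodic theory applied to the power-bounded operator $T := \BR$ (power-boundedness was already recorded above), and then to read off honesty and the precise form of $G$ from the spectral dictionary of Theorem \ref{HTSpect} together with Proposition \ref{H-dense}. The structural fact that makes the ergodic theory collapse onto exactly the honesty conditions is that Theorem \ref{HTSpect}(i) gives $1 \notin \sm_p(T)$, i.e.\ $\ker(I - T) = \{0\}$; consequently any Cesàro (ergodic) projection attached to $T$ must be the zero operator.

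For the first equivalence I would invoke the Mean Ergodic Theorem for power-bounded operators, in the form: $T$ is mean ergodic if and only if $X = \ker(I - T) \oplus \overline{\Img(I - T)}$. Since $\ker(I - T) = \{0\}$, this direct-sum decomposition holds precisely when $\overline{\Img(I - T)} = X$, i.e.\ when $\Img(I - \BR)$ is dense in $X$. By Proposition \ref{H-dense} this density is exactly honesty of $\sg{V}$, which settles ``honest $\Longleftrightarrow$ mean ergodic''. (One could also obtain the forward direction more directly: honesty gives $\norm{T^n u} \to 0$ for $u \in X_+$ by Theorem \ref{HT-R}(iii), hence $T^n \to 0$ strongly on all of $X$, and Cesàro means of a null sequence vanish; but the range characterisation is cleaner and handles both implications at once.)

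For the second assertion I would use the Dunford--Lin uniform ergodic theorem: for an operator with $\norm{T^n}/n \to 0$ (immediate here from power-boundedness), uniform ergodicity is equivalent to $\Img(I - T)$ being \emph{closed}. If $T$ is uniformly ergodic it is in particular mean ergodic, so by the first part $\sg{V}$ is honest and $\Img(I - T)$ is dense; closedness then upgrades this to $\Img(I - T) = X$, and combined with $\ker(I - T) = \{0\}$ we find that $I - T$ is a bijection, i.e.\ $1 \in \rho(T)$. Theorem \ref{HTSpect}(ii) then delivers $G = A + B$. Conversely, if $G = A + B$ then Theorem \ref{HTSpect}(ii) gives $1 \in \rho(T)$, so $I - T$ is boundedly invertible, $\Img(I - T) = X$ is closed, and uniform ergodicity follows (either from Dunford--Lin, or directly from $\frac1n \sum_{k=0}^{n-1} T^k = \frac1n (I - T^n)(I - T)^{-1}$, whose norm is $O(1/n)$).

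I expect the principal difficulty to be expository rather than mathematical: pinning down the two ergodic theorems in exactly the generality required, namely for power-bounded operators on a space that need not be reflexive---abstract state spaces such as $L^1$ or preduals of von Neumann algebras are typically non-reflexive, so mean ergodicity is a genuine hypothesis rather than automatic. The one load-bearing point worth emphasising is that $1$ is never an eigenvalue of $T$, so the continuous-spectrum case $1 \in \sm_c(T)$---honest, but with $D(G) \supsetneq D(A)$ and $G = \overline{A + B} \supsetneq A + B$---is precisely the honest-yet-not-uniformly-ergodic regime. This confirms that uniform ergodicity is the sharp dividing line between $G = A + B$ and $G = \overline{A + B}$, and is what upgrades the mean-ergodic criterion to the ``moreover'' statement.
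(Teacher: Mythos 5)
Your proposal is correct and follows essentially the same route as the paper: both parts rest on the observation that $\ker(I-\BR)=\{0\}$ (Theorem \ref{HTSpect}(i)), after which the Mean Ergodic Theorem reduces mean ergodicity to density of $\Img(I-\BR)$ (honesty, via Proposition \ref{H-dense}) and the Uniform Ergodic Theorem reduces uniform ergodicity to $\Img(I-\BR)=X$, i.e.\ $1\in\rho(\BR)$ and hence $G=A+B$ by Theorem \ref{HTSpect}(ii). The extra remarks (the direct strong-convergence argument via Theorem \ref{HT-R}(iii) and the explicit $O(1/n)$ computation) are correct but not needed.
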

\begin{proof}
Fix $\ld>0$. Since $\BR$ is power-bounded, $\BR$ is Ces\'{a}ro bounded and $\lim_{n\to \infty}\frac{1}{n}\norm{(\BR)^n u}=0$ for all $u \in X$. Moreover, Theorem \ref{HTSpect} tells us that $\ker(I-\BR) = \{0\}$. Hence by the Mean Ergodic Theorem \cite[Theorem 2.1.3]{Kr}, the mean ergodicity of $\BR$ is equivalent to the condition $X = \ov{\Img(I-\BR)}$. Applying Proposition \ref{H-dense}, it follows that $\BR$ is mean ergodic if and only if $\sg{V}$ is honest. 

To prove the second assertion, we use the Uniform Ergodic Theorem. Since $\BR$ is power bounded, it satisfies the condition $\lim_{n\to
\infty}\frac{1}{n}\norm{(\BR)^n}=0$. Hence by the Uniform Ergodic Theorem \cite[Theorem 2.2.1]{Kr} and the fact that $\ker(I-\BR) =\{0\}$, it follows that $\BR$ is uniformly ergodic if and only
if $\Img(I-\BR) = X$. But $\Img(I-\BR) = X$ if and only if $I-\BR$ is invertible (as $\ker(I-\BR) =\{0\}$). Hence by Theorem \ref{HTSpect}, it follows that
$G=A+B$ if and only if $\BR$ is uniformly ergodic.
\end{proof}

The mean ergodic characterisation given in Theorem \ref{HT-ME} is motivated by some results of Tyran-Kami\'{n}ska in \cite{TK} and \cite{TK09}. In \cite[Theorem 1.3]{TK} Tyran-Kami\'{n}ska proves a generation theorem for an honest semigroup in a variant of Kato's Theorem in real Banach lattices under the additional condition that $\BR$ is mean ergodic. In \cite[Theorem 3.4]{TK09} on the other hand, she proves that the perturbed semigroup in Kato's original theorem in $L^1$ (satisfying condition (iii) in Theorem \ref{K-Ass} with equality) is stochastic if and only if $\BR$ is mean ergodic.

\subsection{Honesty and Adjoint Operators}
Next, we give a characterisation for honesty based on adjoint operators. The spectral characterisation in Theorem \ref{HTSpect} and Proposition \ref{H-dense} showed that honesty of the semigroup is related to the existence of eigenvectors of $(\BR)^*$ and $(A+B)^*$. The following result tells us that it suffices to consider the existence of positive subeigenvectors of $(\BR)^*$ and $(A+B)^*$ instead of eigenvectors.
\begin{thm}\label{HT-dual}
Let $X$ be an abstract state space and suppose $A,B$, $\sg{V}$ are as in Theorem \ref{K-Ass}. Fix $\ld>0$. The following are equivalent:
\begin{enumerate}
 \item $\sg{V}$ is dishonest.
\item There exists $f_\ld \in X^*\backslash\{0\}$ such that $((\BR)^{*n} f_\ld)$ is weak$^*$-convergent and $\wslim_{n \to \infty} (\BR)^{*n} f_\ld \neq 0$.
\item There exists $f_\ld \in X_+^*\backslash\{0\}$ such that $((\BR)^{*n} f_\ld)$ is weak$^*$-convergent and $\wslim_{n \to \infty} (\BR)^{*n} f_\ld \neq 0$.
\item There exists $f_\ld \in X_+^*\backslash\{0\}$ such that $(\BR)^{*} f_\ld \geq f_\ld$.
\item There exists $f_\ld \in X_+^*\backslash\{0\}$ such that $(A+B)^{*} f_\ld \geq \ld f_\ld$.
\item There exists $f_\ld \in X_+^*\backslash\{0\}$ such that $(A+B)^{*} f_\ld = \ld f_\ld$.
\item There exists $f_\ld \in X^*\backslash\{0\}$ such that $(A+B)^{*} f_\ld = \ld f_\ld$.
\end{enumerate}
\end{thm}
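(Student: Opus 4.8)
The plan is to reduce every one of the seven statements to the single criterion $\ker(I-(\BR)^*)\neq\{0\}$, which by Proposition \ref{H-dense}(ii) together with Lemma \ref{BddImg} is exactly dishonesty, and then to arrange the conditions into one cycle of implications. The two non-routine ingredients will be a canonical \emph{positive} fixed point of $(\BR)^*$, which manufactures positivity, and a monotone weak$^*$-convergence argument that upgrades subeigenvectors to genuine fixed points.

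First I would record the structural facts: $(\BR)^*$ is positive and weak$^*$-continuous (being an adjoint), and since $\sg{V}$ is substochastic one has $(\BR)^*\Psi\leq\Psi$, because $\inp{\Psi-(\BR)^*\Psi,u}=\norm{u}-\norm{\BR u}\geq 0$ for $u\in X_+$. Iterating the positive operator $(\BR)^*$ shows that $((\BR)^{*n}\Psi)_n$ is non-increasing and bounded below by $0$; testing against $u\in X_+$ gives $\inp{(\BR)^{*n}\Psi,u}=\norm{(\BR)^n u}$, a non-increasing non-negative real sequence, hence convergent. As $X_+$ generates $X$, the sequence is weak$^*$-convergent to some $h\in X_+^*$ with $(\BR)^* h=h$ and $\inp{h,u}=\lim_n\norm{(\BR)^n u}$ for $u\in X_+$. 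By Theorem \ref{HT-R}(iii), honesty is precisely the statement that this limit vanishes for all $u\in X_+$, i.e.\ $h=0$; so dishonesty is equivalent to $h\neq 0$. This $h$ is the engine for positivity: it provides at once a nonzero positive fixed point of $(\BR)^*$ and, via Lemma \ref{BddImg}, a nonzero positive eigenvector of $(A+B)^*$ at $\ld$, giving (i) $\Rightarrow$ (vi).

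For the rest of the cycle I would run (vi) $\Rightarrow$ (v) $\Rightarrow$ (iv) $\Rightarrow$ (iii) $\Rightarrow$ (ii) $\Rightarrow$ (i), folding in (vii) through (vi) $\Rightarrow$ (vii) $\Rightarrow$ (i). The implications (vi) $\Rightarrow$ (v), (vi) $\Rightarrow$ (vii) and (iii) $\Rightarrow$ (ii) are immediate inclusions of eigenvectors among subeigenvectors and of positive functionals among general ones. The step (v) $\Rightarrow$ (iv) rests on the identity $(I-(\BR)^*)f=R(\ld,A)^*(\ld-(A+B))^* f$ for $f\in D((A+B)^*)$, obtained by dualising $(\ld-(A+B))R(\ld,A)=I-\BR$ and testing against $u\in X$; since $R(\ld,A)^*$ is positive and $(\ld-(A+B))^* f=\ld f-(A+B)^* f\leq 0$, this forces $(\BR)^* f\geq f$. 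The step (iv) $\Rightarrow$ (iii) is the monotone-convergence argument run upward: from $(\BR)^* f\geq f\geq 0$ the sequence $((\BR)^{*n}f)_n$ is non-decreasing, while $\inp{(\BR)^{*n}f,u}=\inp{f,(\BR)^n u}\leq\norm{f}\,\norm{u}$ for $u\in X_+$ by substochasticity, so it is weak$^*$-convergent to some $g\geq f$; as $f\neq 0$ is positive, $g\neq 0$, giving (iii). Finally (ii) $\Rightarrow$ (i) and (vii) $\Rightarrow$ (i) close the loop: in (ii) the nonzero weak$^*$-limit $g$ satisfies $(\BR)^* g=g$ by weak$^*$-continuity, so $\ker(I-(\BR)^*)\neq\{0\}$, and in (vii) Lemma \ref{BddImg} places the eigenvector directly in that kernel.

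The main obstacle I expect is producing the \emph{positive} (sub)eigenvectors of (iii)--(vi) rather than the general functionals of (ii) and (vii): the spectral picture behind Proposition \ref{H-dense} only delivers a possibly non-positive element of $\ker(I-(\BR)^*)$, so positivity must be recovered separately, which is exactly what the canonical vector $h$ and the monotone weak$^*$-limit in (iv) $\Rightarrow$ (iii) accomplish. The two technical points to handle with care are the weak$^*$-continuity of $(\BR)^*$, needed to pass limits through the fixed-point relations, and the justification of the weak$^*$-convergence of the monotone sequences, which I reduce to convergence of real, bounded, monotone sequences obtained by testing against the generating cone $X_+$, with boundedness furnished by substochasticity.
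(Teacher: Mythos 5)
Your proof is correct and follows essentially the same route as the paper: the same cycle (vi) $\Rightarrow$ (v) $\Rightarrow$ (iv) $\Rightarrow$ (iii) $\Rightarrow$ (ii) $\Rightarrow$ (i), with (v) $\Rightarrow$ (iv) via the dualised identity $(\BR)^*-I \supseteq R(\ld,A^*)((A+B)^*-\ld)$, (iv) $\Rightarrow$ (iii) via the monotone bounded sequences $\inp{(\BR)^{*n}f_\ld,u}$ tested against the generating cone, and (i) $\Leftrightarrow$ (vii) via Proposition \ref{H-dense}. The only substantive deviation is the step (i) $\Rightarrow$ (vi): the paper invokes the fact that the defect functional $\Dl$ is a positive element of $\ker(I-(\BR)^*)$ which is nonzero exactly under dishonesty, whereas you construct the positive fixed point $h=\wslim_n(\BR)^{*n}\Psi$ directly and identify $h\neq 0$ with dishonesty through Theorem \ref{HT-R}(iii); since $\inp{h,u}=\lim_n\norm{(\BR)^nu}$ for $u\in X_+$, your $h$ is in fact the same functional $\Dl$, so this amounts to a more self-contained derivation of the cited input rather than a different argument.
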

\begin{proof}
(i) $\Leftrightarrow$ (vii) follows directly from Proposition \ref{H-dense} (ii). (i) $\Rightarrow$ (vi) because $\Dl \in \ker(I-(\BR)^*)=\ker(\ld-(A+B)^*)$ \cite[Theorem 3.24]{ALM} and dishonesty of the semigroup implies $\Dl \neq 0$. (vi) $\Rightarrow$ (v) is obvious. To show (v) $\Rightarrow$ (iv), note that
$\BR -I = (A+B - \ld) R(\ld,A)$. Hence $(\BR)^* -I \supseteq R(\ld, A^*)((A+B)^*)-\ld)$. Since $R(\ld, A^*)$ is a positive operator, it follows that
(v) $\Rightarrow$ (iv). 

To show (iv) $\Rightarrow$ (iii), we first note that $(\BR)^*$ is a power bounded operator as $\BR$ is. Moreover, (iv) and the
positivity of $(\BR)^*$ implies that for all $u \in X_+$, $\inp{ (\BR)^{*n} f_\ld, u } \geq \inp{ (\BR)^{*(n-1)} f_\ld, u }$ for all $n \in \N$. Hence $\{ \inp{(\BR)^{*n} f_\ld, u } \}$ is a monotonically increasing sequence in $\R$ which is bounded above, hence converges. Since the sequence is bounded below by
$\inp{f_\ld, u}$ and $f_\ld \neq 0$, it converges to a non-zero element. Since this holds for all $u \in X_+$ and $X_+$ is generating, $\wslim (\BR)^{*n} f_\ld$ exists and is non-zero.  

(iii) $\Rightarrow$ (ii) is clear. It remains to show (ii) $\Rightarrow$ (i). Let $F:= \wslim (\BR)^{*n} f_\ld$ and $F_n:= (\BR)^{*n} f_\ld$, $n \in \N$. Take $u \in X$.
Then \[\inp{F_n, u} = \inp{(\BR)^*F_{n-1}, u} = \inp{F_{n-1}, \BR u }.\] Letting $n \to \infty$ on both sides, we have $\inp{F, u} = \inp{F, \BR u } =
\inp{(\BR)^*F, u}$, i.e. $ (I-(\BR)^*)F =0$. Thus by Proposition \ref{H-dense} (ii), $\sg{V}$ is dishonest.
\end{proof}

\subsection{Honesty and Uniqueness of the Kato Semigroup}\label{S-HTUniq} 
The final characterisation of honesty that we present is based on the uniqueness of the perturbed semigroup in Kato's Theorem. Recall from Theorem \ref{K-Ass} that the generator $G$ of the perturbed semigroup in Kato's Theorem is an extension of $A+B$. It turns out that the Kato semigroup is the unique substochastic semigroup generated by an extension of $A+B$ if and only if the Kato semigroup is honest. More precisely,
\begin{thm}\label{H-UAB} Let $X$ be an abstract state space and suppose $A,B$ satisfy the conditions of Theorem \ref{K-Ass} with perturbed semigroup $\sg{V}$ and generator $G$.  
\begin{enumerate} 
\item If $\sg{V}$ is honest, then $\sg{V}$ is the unique substochastic semigroup whose generator is an extension of $A+B$.
\item If $\sg{V}$ is dishonest, then there are infinitely many substochastic semigroups whose generators are extensions of $A+B$. 
\end{enumerate}
\end{thm}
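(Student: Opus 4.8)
The plan is to treat the two parts rather differently, since part (i) is a uniqueness statement whereas part (ii) is a construction of a whole family of distinct semigroups.

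For part (i), I would argue by contradiction. Suppose $\sg{V}$ is honest but there is another substochastic semigroup $\sg{W}$ whose generator $\tilde{G}$ is also an extension of $A+B$. Honesty gives, via Theorem \ref{HT-R}, that $G = \ov{A+B}$. The key point is that any closed extension of $A+B$ must contain $\ov{A+B} = G$, so $\tilde{G} \supseteq G$. Now both $G$ and $\tilde{G}$ generate substochastic semigroups, and the minimality clause in Kato's Theorem \ref{K-Ass} forces $\tilde{G} \supseteq G$ in the sense that $\sg{V}$ is dominated by $\sg{W}$. But two semigroup generators satisfying $\tilde{G} \supseteq G$ with both generating $C_0$-semigroups must in fact be equal: a proper extension of a generator cannot itself be a generator, because a generator's resolvent set is nonempty, and if $\ld \in \rho(G) \cap \rho(\tilde{G})$ then $(\ld - G)$ and $(\ld - \tilde{G})$ are both bijections from their domains onto $X$, which together with $\tilde{G} \supseteq G$ forces $D(\tilde{G}) = D(G)$. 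Hence $\tilde{G} = G$ and $\sg{W} = \sg{V}$. The main obstacle here is to justify cleanly that the competitor's generator really does contain $\ov{A+B}$; closedness of the generator handles this, but I would want to be careful that $\tilde{G}$ extends $A+B$ and is closed, hence extends the closure.

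For part (ii), the idea is that dishonesty produces genuine slack which can be used to build larger generators. By Theorem \ref{HT-R}, dishonesty means $G \supsetneq \ov{A+B}$, equivalently by Theorem \ref{HTSpect}(iv) that $1 \in \sm_r(\BR)$, i.e.\ $\Img(I - \BR)$ is not dense. I would exploit the one-parameter freedom coming from the loss functional $\Dl \neq 0$. Concretely, I expect one can rescale: for each $c \in [0,1)$ consider a modified perturbation, or more directly, interpolate between the minimal semigroup $\sg{V}$ generated by $G$ and a maximal substochastic semigroup whose generator is a strictly larger extension of $A+B$. The existence of at least one such strictly larger generator follows from the residual-spectrum condition; to get \emph{infinitely many}, I would construct a continuum of intermediate extensions $G_c$ of $A+B$, each generating a substochastic semigroup, parametrised so that distinct $c$ give distinct domains or distinct resolvents. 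A natural device is to fix a nonzero $f \in \ker(\ld - (A+B)^*) \cap X_+^*$ (which exists by Theorem \ref{HT-dual}) and use it to define a family of one-dimensional extensions of the resolvent $R(\ld, A+B)$ along the direction witnessing the deficiency of $\Img(I - \BR)$.

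The hard part will be part (ii): producing a genuinely infinite family rather than just two distinct semigroups, and verifying that each constructed generator really does generate a \emph{substochastic} semigroup (positivity and contractivity on the cone), not merely a $C_0$-semigroup. I anticipate the cleanest route is to first establish that dishonesty yields one maximal extension $G_{\max} \supsetneq G$ generating a substochastic semigroup, then show that the resolvents of the admissible generators form a convex set: any convex combination $\ld$-resolvent between $R(\ld, G)$ and $R(\ld, G_{\max})$ is again the resolvent of a generator of a substochastic semigroup, and these are pairwise distinct. Checking that convex combinations of resolvents remain pseudo-resolvents satisfying the Hille–Yosida positivity and contraction estimates is the technical crux; the abstract-state-space structure (monotonicity and additivity of the norm on $X_+$) should be what makes the substochasticity estimates go through.
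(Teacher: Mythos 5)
Your argument for part (i) is correct and is essentially the ``primal'' version of the paper's proof: the paper passes to adjoints ($\Gt\supseteq A+B$ gives $\Gt^*\subseteq (A+B)^*=G^*$, and two generators of adjoint semigroups in an extension relation must coincide), whereas you work directly with $\Gt\supseteq\ov{A+B}=G$ and the standard fact that a proper extension of a generator cannot again be a generator of a $C_0$-semigroup (injectivity of $\ld-\Gt$ on the larger domain forces $D(\Gt)=D(G)$). Both routes are sound; yours is slightly more elementary, the paper's avoids having to say anything about closedness of $\Gt$ because taking adjoints does that automatically.

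Part (ii), however, has a genuine gap. You never actually produce a second substochastic semigroup: the claim that ``the existence of at least one such strictly larger generator follows from the residual-spectrum condition'' does not follow. The condition $1\in\sm_r(\BR)$ only tells you that $G\supsetneq\ov{A+B}$, i.e.\ that the \emph{minimal} generator already properly extends the closure; it says nothing about the existence of a substochastic generator $G_{\max}\supsetneq G$, and constructing one is precisely the content of the theorem. Moreover, the interpolation device you propose is problematic: convex combinations $cR(\ld,G_1)+(1-c)R(\ld,G_2)$ of resolvents of two different operators do not in general satisfy the resolvent identity, so they are not pseudo-resolvents; and even if you define $G_c:=\ld-\bigl(cR(\ld,G_1)+(1-c)R(\ld,G_2)\bigr)^{-1}$ from a single $\ld$, you would still have to verify positivity and the bound $\norm{\mu R(\mu,G_c)u}\leq\norm{u}$ on $X_+$ for \emph{all} $\mu>0$, which is not addressed. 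The paper instead gives a completely explicit construction: for any $u_0\in X_+\setminus\{0\}$ with $\norm{u_0}\leq 1$, set $\Gt u=Gu+(a_0-\ab)(u)\,u_0$ on $D(G)$. This extends $A+B$ because $a_0=\ab$ on $D(A)$, it satisfies $\inp{\Psi,\Gt u}\leq 0$ on $D(G)_+$ by positivity of $a_0,\ab$, and the resolvent equation is solved uniquely and positively by $v=R(\ld,G)u+\ap_u R(\ld,G)u_0$ with $\ap_u$ essentially proportional to $\inp{\Dl,u}$ (which is where dishonesty, i.e.\ $\Dl\neq 0$, enters to make $\Gt\neq G$); a Hille--Yosida-type lemma for substochastic semigroups then yields generation, and varying $u_0$ gives infinitely many distinct semigroups. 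Your instinct to use a positive element of $\ker(\ld-(A+B)^*)$ is in the right direction --- the functional $\Dl$ is exactly such an element --- but without the explicit rank-one perturbation of $G$ (not of $R(\ld,A+B)$) and the substochasticity verification, the proof is incomplete.
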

We will require a few generation results for substochastic semigroups in order to prove Theorem \ref{H-UAB}. The first result is a version of the Hille-Yosida Theorem for substochastic semigroups which can be proven in essentially the same way, only with additional positivity constraints. 
\begin{lem}\label{HYSubst} Let $X$ be an ordered Banach space. An operator $A$ on $X$ with dense domain generates a substochastic (resp.\ stochastic) semigroup if and only if for every $\ld>0$, $A$ has a resolvent $R(\ld, A)$ with domain $X$ and $\ld R(\ld, A)$ is a substochastic (resp.\ stochastic) operator. 
\end{lem}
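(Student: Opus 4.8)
The plan is to mirror the classical Hille--Yosida construction, inserting positivity and contractivity checks at each stage, and to treat the two implications separately. Throughout I would exploit that $X_+$ is closed and generating, so that the substochastic bound on the cone upgrades to a uniform operator bound $\norm{U(t)} \leq M$, which is what makes the various integrals and limits behave.

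For the necessity direction, suppose $A$ generates a substochastic semigroup $\sg{U}$. Since $\sg{U}$ is bounded, the Laplace representation $R(\ld,A)u = \int_0^\infty e^{-\ld t}U(t)u\,dt$ is valid for every $\ld > 0$ and defines the resolvent on all of $X$. Positivity of $R(\ld,A)$ then follows because $U(t)u \in X_+$ for $u \in X_+$, $X_+$ is closed and convex, and the Bochner integral of an $X_+$-valued function lies in $X_+$. For the norm estimate I would apply the integral triangle inequality together with $\norm{U(t)u} \leq \norm{u}$ to obtain $\norm{\ld R(\ld,A)u} \leq \ld\int_0^\infty e^{-\ld t}\norm{U(t)u}\,dt \leq \norm{u}$ for $u \in X_+$, so that $\ld R(\ld,A)$ is substochastic.

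For the sufficiency direction I would run the Yosida approximation. Set $A_\ld = \ld(\ld R(\ld,A) - I)$ and write, with $S := \ld R(\ld,A)$,
\[
e^{tA_\ld} = e^{-\ld t}\sum_{k=0}^\infty \frac{(\ld t)^k}{k!}\,S^k.
\]
The powers $S^k$ are positive, so $e^{tA_\ld}u$ is a norm-convergent positive combination of elements of $X_+$ and hence lies in $X_+$ when $u \in X_+$; this yields positivity of the approximant semigroup. For substochasticity, iterating the contraction on the cone gives $\norm{S^k u} \leq \norm{u}$ for $u \in X_+$, so the triangle inequality together with $e^{-\ld t}\sum_{k} (\ld t)^k/k! = 1$ gives $\norm{e^{tA_\ld}u} \leq \norm{u}$. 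I would then invoke the classical Yosida convergence argument, using the uniform operator bound coming from the generating cone, to conclude that $e^{tA_\ld}u$ converges strongly as $\ld \to \infty$ to a $C_0$-semigroup $\sg{U}$ with generator $A$. Because $X_+$ is closed and the norm is continuous, both positivity and the substochastic bound pass to this strong limit, so $\sg{U}$ is substochastic.

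The stochastic case is identical except that the contraction inequalities become norm equalities; at the two places where the triangle inequality was used, namely the Laplace transform and the exponential series, I would instead evaluate the norm through its additivity on $X_+$ so as to preserve equality. The main obstacle I anticipate lies in the sufficiency direction: confirming that the Yosida approximants $e^{tA_\ld}$ are genuinely substochastic, that is, controlling positivity and the norm through the infinite exponential series, and securing the uniform operator bound needed both to drive the strong-limit argument and to transfer substochasticity to the limit semigroup $\sg{U}$.
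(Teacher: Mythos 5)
Your proposal follows exactly the route the paper intends: the paper offers no written proof, describing the lemma as the Hille--Yosida theorem ``proven in essentially the same way, only with additional positivity constraints,'' and your Laplace-transform argument for necessity together with the Yosida approximants $e^{tA_\ld}$ for sufficiency is precisely that, with the positivity and cone-contractivity checks inserted at the right places; the observation that contractivity on a closed generating cone upgrades to a uniform operator bound is also the correct way to license the Laplace representation for every $\ld>0$ and the stability estimate needed for the Yosida limit. The one point to watch is the stochastic case: you propose to restore equality by ``evaluating the norm through its additivity on $X_+$,'' but the lemma is stated for a general ordered Banach space, where the norm need not be additive on the cone; additivity is only available once $X$ is an abstract state space (the setting in which the lemma is actually applied), where one writes $\norm{\ld R(\ld,A)u}=\inp{\Psi,\ld R(\ld,A)u}=\int_0^\infty \ld e^{-\ld t}\norm{U(t)u}\,dt=\norm{u}$ via the functional $\Psi$ of \eqref{E-Psi}, and similarly for the exponential series. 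Apart from this hypothesis mismatch in the stochastic half --- which the paper's own terse statement arguably shares --- your argument is sound.
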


Using this lemma we can derive another generation result for substochastic semigroups on abstract state spaces.
\begin{lem}\label{SubstGen} Let $X$ be an abstract state space. A linear operator $A$ with dense domain generates a substochastic (resp.\ stochastic) semigroup on $X$ if and only if 
\begin{enumerate}
\item $\inp{\Psi, Au} \leq 0 \:(=0)$ for all $u \in D(A)_+$, and
\item for each $\ld>0$ and $u \in X$, the equation 
\begin{equation}\label{E-ResSoln}
\ld v-Av =u
\end{equation}
has a unique solution $v =R(\ld, A)u \in D(A)$ and $R(\ld, A)u \in X_+$ for all $u \in X_+$. 
\end{enumerate} 
\end{lem}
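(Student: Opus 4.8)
The plan is to use the substochastic Hille--Yosida theorem, Lemma \ref{HYSubst}, as the bridge: it reduces the statement to showing that for every $\ld>0$ the operator $\ld R(\ld,A)$ is well defined, has domain $X$, and is substochastic (resp.\ stochastic). Condition (ii) supplies everything except the norm bound, so the crux will be to extract the contraction estimate $\norm{\ld R(\ld,A)u} \leq \norm{u}$ on $X_+$ from condition (i), by testing the resolvent equation against the functional $\Psi$ from \eqref{E-Psi}.

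For the direction assuming (i) and (ii), I would first note that uniqueness of the solution in (ii) makes $R(\ld,A)\colon X \to D(A)$ a well-defined linear operator, and the stated positivity gives $R(\ld,A)X_+ \subseteq X_+$. Then, fixing $u \in X_+$ and setting $v = R(\ld,A)u$, I have $v \in D(A)_+$ and $Av = \ld v - u$. Applying $\Psi$ and invoking (i) yields $\inp{\Psi, \ld v - u} = \inp{\Psi, Av} \leq 0$; since $u,v \in X_+$ we have $\inp{\Psi, v} = \norm{v}$ and $\inp{\Psi, u} = \norm{u}$, whence $\ld\norm{v} \leq \norm{u}$. This is exactly substochasticity of $\ld R(\ld,A)$ on the cone. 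Boundedness on all of $X$ then follows because $X_+$ is generating: decomposing a general $u = u_+ - u_-$ with $u_\pm \in X_+$ controls $\norm{R(\ld,A)u}$ by $\norm{u}$, so $R(\ld,A)$ is a genuine resolvent with domain $X$, and Lemma \ref{HYSubst} delivers the substochastic semigroup.

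For the converse, I would assume $A$ generates a substochastic semigroup $\sg{U}$ and recover both conditions. Condition (i) comes from differentiating at $t=0$: for $u \in D(A)_+$, $\inp{\Psi, Au} = \lim_{t \to 0} t^{-1}(\norm{U(t)u}-\norm{u}) \leq 0$, exactly as in the computation preceding \eqref{E-ab}. Condition (ii) is immediate from Lemma \ref{HYSubst}, which provides the resolvent $R(\ld,A)$ with domain $X$ (hence unique solvability of $\ld v - Av = u$) and the positivity of $\ld R(\ld,A)$, giving $R(\ld,A)X_+ \subseteq X_+$.

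The stochastic case needs only the replacement of every inequality by an equality: with equality in (i) the same computation gives $\ld\norm{v} = \norm{u}$, so $\ld R(\ld,A)$ is stochastic, and conversely the differentiation identity yields $\inp{\Psi, Au}=0$. The one point I expect to need care is the passage from the contraction estimate on the positive cone to the boundedness (and hence resolvent status) of $R(\ld,A)$ on the whole space, which rests on the cone being generating together with additivity of the norm; everything else is a direct manipulation of the resolvent equation against $\Psi$.
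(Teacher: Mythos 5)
Your proposal is correct and follows essentially the same route as the paper: necessity via Lemma \ref{HYSubst} together with differentiating $\norm{T(t)u}$ at $t=0$, and sufficiency by pairing the resolvent identity $A R(\ld,A)u = \ld R(\ld,A)u - u$ with $\Psi$ to obtain $\norm{\ld R(\ld,A)u} \leq \norm{u}$ on $X_+$, then invoking Lemma \ref{HYSubst} again. The only difference is that you make explicit the passage from the cone estimate to boundedness on all of $X$ via the generating cone, a point the paper leaves implicit.
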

\begin{proof} The necessity follows directly from Lemma \ref{HYSubst} and the fact that that if $A$ generates a substochastic semigroup $\sg{T}$, then for
$u \in D(A)_+$, \[\inp{\Psi, Au} = \lim_{t \to 0} \inp{\Psi,\frac{T(t)u-u}{t}} = \lim_{t \to 0}\frac{1}{t} (\norm{T(t)u}-\norm{u}) \leq 0.\] The sufficiency follows since for
$u \in X_+$ and $\ld>0$, we have $\inp{\Psi, \ld R(\ld, A)u} = \inp{\Psi, u} + \inp{\Psi, A R(\ld, A)u} \leq \inp{\Psi, u}$ i.e. $\norm{\ld R(\ld, A)u} \leq
\norm{u}$. Hence by Lemma \ref{HYSubst}, $A$ generates a substochastic semigroup.
\end{proof}
%

\begin{proof}[Proof of Theorem \ref{H-UAB}] To prove (i), let $\sg{\Vt}$ be another substochastic semigroup with generator $\Gt \supset A+B$. Then $(A+B)^*
\supset \Gt^*$. Since $\sg{V}$ is honest, $\ov{A+B} = G$, i.e. $(A+B)^* = G^*$ and so $(A+B)^*$ generates an adjoint semigroup. Since $(A+B)^*$ and $\Gt^*$ both generate adjoint semigroups, it follows that $\Gt^* = (A+B)^* = G^*$. Thus for $\ld>0$, $R(\ld, \Gt)^* =R( \ld, G)^*$ and so $R(\ld, G) = R(\ld, \tilde{G})$. Therefore, by the Post-Widder Inversion Formula, $\tilde{V}(t) = V(t)$ for all $t \geq 0$, so $\sg{V}$ is unique.

To prove (ii), we will construct an infinite set of substochastic semigroups whose generators are extensions of $A+B$. Fix $u_0 \in X_+\backslash \{0\}$ such
that $\norm{u_0} \leq 1$. Define $\Gt$ by 
\begin{equation}\label{E-GNonUniq}
\Gt u = Gu +(a_0-\ab)(u)u_0, \qquad u \in D(\Gt).
\end{equation}
Then $\Gt$ has domain $D(G)$ and for $u \in D(A+B)=D(A)$, we
have $\Gt u = Gu = (A+B)u$ since  $a_0|_{D(A)} = a= \ab|_{D(A)}$. Hence, $\Gt$ is an extension of $A+B$. It remains to show that $\Gt$ generates a substochastic semigroup. To do so, we check that $\Gt$ satisfies Proposition \ref{SubstGen}. Condition (i) is satisfied since for all $u \in D(G)_+$,
\[\inp{\Psi, \Gt u} = \inp{\Psi, Gu} + (a_0-\ab)(u) \inp{\Psi, u_0}= -a_0(u)(1-\norm{u_0}) - \ab(u) \norm{u_0}\leq 0\]
as $a_0,\ab$ are positive functionals and $\norm{u_0} \leq 1$. To show that condition (ii) is satisfied, substitute $\Gt$ into \eqref{E-ResSoln} to get 
\begin{equation}\label{E-UniqRes1}
(\ld -G)v+(\ab-a_0)(v)u_0 = u.
\end{equation} 
Applying $R(\ld, G)$ to both sides and rearranging, we have
\begin{equation}\label{E-UniqRes2}
v = R(\ld, G)u + \ap_u R(\ld, G)u_0  
\end{equation}
where $\ap_u$ is some constant depending on $u$.

To see that $\ap_u$ is unique for every $u$, substitute \eqref{E-UniqRes2} into \eqref{E-UniqRes1}.
Then $u_0 \neq 0$ implies
\[\ap_u(1+(\ab-a_0)(R(\ld, G)u_0)) = (a_0-\ab)(R(\ld, G)u).\]
Now consider the coefficient of $\ap_u$. By the definition of $a_0$ we have 
\begin{align*}
1+(\ab-a_0)(R(\ld, G)u_0) &= 1+\ab(R(\ld, G)u_0)+\inp{\Psi, \ld R(\ld, G)u_0-u_0}\\
&=1-\norm{u_0}+\norm{\ld R(\ld, G)u_0}+\ab(R(\ld, G)u_0) 
\end{align*}
which is strictly positive since $\norm{u_0}\leq 1$, $\ab$ is positive, $u_0 \in X_+\backslash \{0\}$ and $R(\ld, G)$ is injective. Therefore $\ap_u$ exists and is unique for all $u \in X$. Moreover, $\ap_u>0$ if $u \in X_+$ because $a_0 \geq \ab$. Hence, the solution $v$ in \eqref{E-UniqRes2} to \eqref{E-ResSoln} is unique and moreover, positive if $u \in X_+$, and so condition (ii) of Proposition \ref{SubstGen} is satisfied. Therefore $\Gt$ generates a substochastic
semigroup. Since $u_0$ was an arbitrary positive element with $\norm{u_0} \leq 1$, it follows that we can construct infinitely many semigroups of this form by
varying $u_0$.
\end{proof}

The proof above shows that if $\sg{V}$ is dishonest and $X$ is not 1-dimensional, there are in fact infinitely many substochastic semigroups $\sg{\Vt}$ with
generator $\Gt \supset A+B$ whose loss is ``minimal" in the sense 
\begin{equation}\label{E-infmany}
\norm{\Vt(t)u} - \norm{u} = -\ab\left(\int_0^t \Vt(s) u \,ds \right)\quad \text{for all } u \in X_+.
\end{equation}
To see this, observe that \eqref{E-GNonUniq} can be rewritten as
\[\Gt u = Gu-\inp{\Psi, Gu}u_0-\ab(u)u_0 \quad\text{ for all }u \in D(G)\]
where $\norm{u_0} \leq 1, u_0 \in X_+\backslash \{0\}$. Taking $u_0$ satisfying $\norm{u_0}=1$, we have for all $u \in D(G)$,
\[\inp{\Psi,\Gt u} = (1-\norm{u_0})\inp{\Psi, Gu}-\ab(u)\norm{u_0} = -\ab(u)\]
and so it follows that \eqref{E-infmany} holds.

Theorem \ref{H-UAB} is a generalisation of \cite[Theorem 6]{R} where Reuter was interested in uniqueness of solutions to the backward Kolmogorov
differential equations. As Kato's Theorem originated from studying solutions to Kolmogorov differential equations, it is unsurprising that we have an analogous result
regarding the uniqueness of solutions in Kato's setting too.

\section{Honesty and Potentials}\label{S-HonPot}
In the study of semigroup properties, one is often interested in the preservation of the properties under modifications to the semigroup. In this section, we look at the preservation of honesty of the perturbed Kato semigroup under another class of perturbations, namely the addition of an absorption term.

An absorption or potential term is a common term which occurs in differential equations which model dynamical systems. In semigroup language, this is often phrased in terms of adding, or more precisely, taking away a (positive) potential term from the generator of the original semigroup. An example of this can be seen in the transport equation studied in \cite[Section 3]{V87} (see also Example \ref{EgCTransp}). The transport operator $T:=T_0-h$ where $T_0$ is the free-streaming operator and $0 \leq h$ is the absorption term. In this case, the free-streaming operator generates the original semigroup $\sg{U}$ while $T_0-h$ generates the new absorption semigroup, $\sg{U_h}$. Note that the relation $U_h(t) \leq U(t)$ holds for all $t \geq 0$ as $h$ is positive. Other examples where absorption terms occur include piecewise deterministic Markov processes \cite{TK09} and the heat equation on graphs \cite{KL}. Thus in this section, we study conditions which ensure that the honesty or dishonesty of the original semigroup is retained by the absorption semigroup. 

\begin{prop}\label{HonPot} Let $X$ be an abstract state space and suppose $A,B$ satisfy the conditions of Kato's Theorem with honest perturbed semigroup $\sg{V}$. Let $K$ be a positive operator such that there is an extension $A_K$ of $(A-K, D(A) \cap D(K))$
that generates a substochastic semigroup and $A_K, B$ also satisfy Kato's Theorem with perturbed semigroup $\sg{V_K}$. If  $(D(A) \cap D(K))_+$ is dense in the graph norm in $D(A)_+$, then $\sg{V_K}$ is also honest.
\end{prop}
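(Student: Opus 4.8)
The plan is to reduce honesty of $\sg{V_K}$ to that of $\sg{V}$ by comparing the powers of $BR(\ld,A_K)$ with those of $\BR$ through the criterion of Theorem \ref{HT-R}(iii). That criterion says $\sg{V}$ is honest precisely when $\norm{(\BR)^n u}\to 0$ for all $u\in X_+$, and the same statement applied to the pair $A_K,B$ (which satisfies Kato's Theorem by hypothesis) characterises honesty of $\sg{V_K}$ via $\norm{(BR(\ld,A_K))^n u}$. Since the norm on an abstract state space is monotone, it suffices to establish the operator domination $R(\ld, A_K)\le R(\ld, A)$ on $X_+$: then $BR(\ld,A_K)\le \BR$ because $B\ge 0$, a routine induction gives $0\le (BR(\ld,A_K))^n u\le(\BR)^n u$ for $u\in X_+$ (apply the comparable positive operators repeatedly), and hence $\norm{(BR(\ld,A_K))^n u}\le\norm{(\BR)^n u}\to 0$, so $\sg{V_K}$ is honest.

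The crux is therefore the resolvent domination $R(\ld, A_K)u\le R(\ld, A)u$ for $u\in X_+$, and this is exactly where the density hypothesis is used. I would fix $u\in X_+$ and set $v=R(\ld,A)u\in D(A)_+$. By hypothesis there exist $v_n\in (D(A)\cap D(K))_+$ with $v_n\to v$ in the graph norm of $A$. Since $A_K$ extends $(A-K)$ on $D(A)\cap D(K)$, we have $(\ld-A_K)v_n=(\ld-A)v_n+Kv_n$, and applying $R(\ld,A_K)$ yields $v_n=R(\ld,A_K)(\ld-A)v_n+R(\ld,A_K)Kv_n$. Now $(\ld-A)v_n\to(\ld-A)v=u$, so the first term converges to $R(\ld,A_K)u$, while $v_n\to v=R(\ld,A)u$; consequently $R(\ld,A_K)Kv_n\to R(\ld,A)u-R(\ld,A_K)u$. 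Each $R(\ld,A_K)Kv_n$ is positive because $v_n\ge 0$, $K\ge 0$ and $R(\ld,A_K)$ is positive, and the positive cone is closed, so the limit is positive; that is, $R(\ld,A_K)u\le R(\ld,A)u$.

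The main difficulty to anticipate is that $A_K$ is only assumed to be \emph{some} extension of $(A-K)$ restricted to $D(A)\cap D(K)$, so a vector $R(\ld,A_K)u$ need not lie in $D(A)\cap D(K)$ and the naive identity $R(\ld,A)u-R(\ld,A_K)u=R(\ld,A)KR(\ld,A_K)u$ cannot be read off directly. The density assumption circumvents this by letting me run the whole comparison on the $A$-side, approximating $v=R(\ld,A)u$ by positive elements of $D(A)\cap D(K)$; the positivity of the approximants — available precisely because the hypothesis concerns the positive cones — is what forces the limiting inequality. I would carry out the reduction for a single fixed $\ld>0$, since honesty is $\ld$-independent. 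As an alternative to Theorem \ref{HT-R}, the domination $R(\ld,A_K)\le R(\ld,A)$ could instead be fed into the adjoint criterion of Theorem \ref{HT-dual}: a positive subeigenvector $(BR(\ld,A_K))^* f\ge f$ for the $A_K$-operator is automatically a positive subeigenvector $(\BR)^* f\ge f$ for the $A$-operator, contradicting honesty of $\sg{V}$; but the route through Theorem \ref{HT-R} seems the most economical.
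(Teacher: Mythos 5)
Your proposal is correct and follows essentially the same route as the paper: both establish the resolvent domination $R(\ld,A_K)\le R(\ld,A)$ by exploiting the graph-norm density of $(D(A)\cap D(K))_+$ in $D(A)_+$ together with closedness of the positive cone, and then conclude via the comparison $(BR(\ld,A_K))^n\le(\BR)^n$ and the vanishing-powers criterion of Theorem \ref{HT-R}. The only cosmetic difference is that the paper phrases the limiting step through density of the set $(\ld-A)(D(A)\cap D(K))_+$ in $X_+$, whereas you approximate $R(\ld,A)u$ directly; the underlying identity $R(\ld,A)v-R(\ld,A_K)v=R(\ld,A_K)KR(\ld,A)v$ is the same.
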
 
\begin{proof} Let $S:= (\ld-A)(D(A) \cap D(K))_+$. Elementary calculations show that if $(D(A) \cap D(K))_+$ is dense in
$D(A)_+$, then $S$ is dense in $X_+$.

Let $v \in S$. Then 
\begin{align*}
 &(R(\ld, A) - R(\ld, A_K))v  \\
=& R(\ld, A_K)(\ld-A_K-(\ld-A))R(\ld, A)v\\
=& R(\ld, A_K)(\ld-A+K -(\ld-A))R(\ld, A)v \quad\text{(as $R(\ld, A)v \in (D(A) \cap D(K))_+$)}\\
=&R(\ld, A_K)KR(\ld, A)v \geq 0.
\end{align*}
Since $S$ is dense in $X_+$, $R(\ld, A), R(\ld, A_K)$ are bounded and $X_+$ is closed, it follows that $R(\ld, A_K) \leq R(\ld, A)$. Hence $BR(\ld, A_K) \leq
BR(\ld, A)$ and iterating, $(BR(\ld, A_K))^n \leq (BR(\ld, A))^n$. The result now follows by Theorem \ref{HT-R} (ii).
\end{proof} 

Note that if $K$ is a bounded positive operator and $A, B$ satisfy Kato's Theorem, then $A-K, B$ also satisfy Kato's Theorem since $(A-K, D(A))$ also generates a
substochastic semigroup and $\inp{\Psi, (A-K+B)u} = \inp{\Psi, (A+B)u }-\inp{K,u} \leq 0$ for all $u \in D(A)_+$. We will denote the perturbed semigroup by
$\sg{V_K}$. Proposition \ref{HonPot} tells us that honesty is retained even after adding a bounded potential term to the generator. It turns out that in this case, dishonesty is retained as well:
\begin{prop}\label{DishonPot} Let $X$ be an abstract state space. Suppose $A,B$ satisfy the conditions of Kato's Theorem with dishonest perturbed semigroup $\sg{V}$ and let $K$ be a bounded positive operator. Then the perturbed semigroup
$\sg{V_K}$ is also dishonest.
\end{prop}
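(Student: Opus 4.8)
The plan is to argue by contradiction, transferring the bounded absorption term through the uniqueness and domain characterisations of honesty established in Theorems \ref{H-UAB} and \ref{HT-R}. The guiding idea is that a \emph{bounded} potential can be subtracted from or added to a generator without disturbing either the generation of a substochastic semigroup or the operator closure, so that honesty of $\sg{V_K}$ would propagate back to force honesty of $\sg{V}$, contradicting the hypothesis.

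First I would record the perturbation fact underlying everything: since $K$ is a bounded positive operator and $G$ generates a substochastic semigroup, the operator $G-K$ (with $D(G-K)=D(G)$) again generates a substochastic semigroup. This is precisely the reasoning already used in the remark preceding the proposition for $A-K$: subtracting a bounded positive operator preserves positivity of the semigroup, while $\inp{\Psi,(G-K)u}=\inp{\Psi,Gu}-\inp{K,u}\leq 0$ for $u\in D(G)_+$ then yields substochasticity via Lemma \ref{SubstGen}. Moreover $G-K\supseteq(A+B)-K=A_K+B$, since $G\supseteq A+B$ and $A_K=A-K$ on $D(A)$.

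Now suppose, for contradiction, that $\sg{V_K}$ is honest. By Theorem \ref{H-UAB}(i) applied to the pair $(A_K,B)$, the semigroup $\sg{V_K}$ is the unique substochastic semigroup whose generator extends $A_K+B$. Since $G-K$ generates a substochastic semigroup extending $A_K+B$, that semigroup must coincide with $\sg{V_K}$, and comparing generators gives $G_K=G-K$, i.e. $G=G_K+K$. On the other hand, honesty of $\sg{V_K}$ gives, by Theorem \ref{HT-R}(iv), that $G_K=\ov{A_K+B}=\ov{A+B-K}$. The final ingredient is that closure commutes with a bounded perturbation, $\ov{A+B-K}=\ov{A+B}-K$ (if $u_n\to u$ with $(A+B-K)u_n$ convergent, then $Ku_n\to Ku$ by boundedness, hence $(A+B)u_n$ converges, and conversely). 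Combining, $G=G_K+K=(\ov{A+B}-K)+K=\ov{A+B}$, so Theorem \ref{HT-R}(iv) for the pair $(A,B)$ forces $\sg{V}$ to be honest, contradicting dishonesty. Hence $\sg{V_K}$ is dishonest.

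The main obstacle is identifying the right mechanism, since the monotonicity naturally available here runs in the wrong direction for dishonesty: the resolvent inequality $R(\ld,A_K)\leq R(\ld,A)$ used in Proposition \ref{HonPot}, and the induced $BR(\ld,A_K)\leq BR(\ld,A)$, propagate honesty downward but are useless for propagating dishonesty, and the same failure occurs on the dual side, where a positive sub-eigenvector $f$ of $(A+B)^*$ from Theorem \ref{HT-dual} need not satisfy an order bound $K^*f\leq cf$ merely because $K$ is bounded, so one cannot directly manufacture a sub-eigenvector of $(A_K+B)^*$. The contradiction route above circumvents this entirely, reducing the problem to the two standard perturbation facts — that $G-K$ remains a substochastic generator and that $\ov{A+B-K}=\ov{A+B}-K$ — both of which rely essentially on the boundedness of $K$, which is exactly why this hypothesis is indispensable here (in contrast to the density condition needed in Proposition \ref{HonPot}).
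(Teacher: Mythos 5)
Your argument is correct and rests on the same two pillars as the paper's proof: Theorem \ref{H-UAB} and the fact that subtracting a bounded positive operator from the generator of a substochastic semigroup again yields a substochastic generator. The paper runs the implication forward rather than by contradiction: dishonesty of $\sg{V}$ produces, via Theorem \ref{H-UAB}(ii), infinitely many substochastic extensions $G_\ap$ of $A+B$; each $G_\ap-K$ is then a substochastic generator extending $A-K+B$, and this non-uniqueness forces $\sg{V_K}$ to be dishonest by (the contrapositive of) Theorem \ref{H-UAB}(i). That route avoids your detour through Theorem \ref{HT-R}(iv) and the identity $\ov{A+B-K}=\ov{A+B}-K$ --- steps which are correct but not needed --- at the price of perturbing the whole family $G_\ap$ instead of the single generator $G$. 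Both versions buy essentially the same thing; yours is arguably cleaner in that it touches only one auxiliary generator.

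One caveat applies equally to both arguments: the claim that $G-K$ (respectively $G_\ap-K$) generates a \emph{substochastic} semigroup is not actually proved. Your appeal to Lemma \ref{SubstGen} is circular at this point, because positivity of $R(\ld,G-K)$ is precisely what has to be checked, and positivity of $K$ alone does not yield positivity of $e^{-tK}$ in a general abstract state space (on $\R^2$ with the $\ell^1$-norm and standard cone, $K(x,y)=(y,x)$ is positive and bounded, yet $e^{-tK}(1,0)=(\cosh t,-\sinh t)$ leaves the cone). The paper's preceding remark makes the same unproved assertion for $A-K$, so you are no worse off than the source, but in a self-contained write-up this hypothesis on $K$ (e.g.\ that $-K$ generates a positive semigroup, as holds for the absorption operators in the intended applications) should be made explicit or verified.
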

\begin{proof} Theorem \ref{H-UAB} (ii) implies that there are infinitely many extensions $G_\ap$ of $A+B$ which generate substochastic semigroups. Since $K$ is
positive and bounded, $G_\ap-K$ also generates a substochastic semigroup for each $\ap$. Moreover, since $G_\ap \supseteq A+B$ and $K$ is bounded, it follows that
$G_\ap-K \supseteq A-K+B$. Hence there exist infinitely many substochastic semigroups whose generators are extensions of $A-K+B$. So by Theorem \ref{H-UAB} (ii),
$\sg{V_K}$ is also dishonest.   
\end{proof}

It turns out that for the case of preserving dishonesty, adding a bounded potential is in some sense sharp. We can find an example that shows that a dishonest semigroup can be converted into an honest semigroup by adding a potential that is ``large enough". We will not go into the full details here as it requires a lot more auxiliary information, but simply give a brief outline. This example can be found in the study of Laplacians on graphs where the Laplacian is known to generate a substochastic heat semigroup on $\ell^1$. In \cite{W-SC}, it is shown that the heat semigroup can be seen as a perturbed semigroup derived from Kato's Theorem and moreover honesty of the heat semigroup is equivalent to stochastic completeness of the graph. Then \cite[Theorem 2]{KL} states that any graph can be modified to form a stochastically complete one by adding a (sufficiently large) potential term to the Laplacian. In other words, the heat semigroup generated by any graph Laplacian can be modified to become an honest one by increasing the potential term adequately. So dishonesty is not always preserved under absorption.

Finally, let us apply these results to the case of the transport equation.
\begin{eg}\label{EgCTransp} Let us consider the linear transport equation with no incoming particles as boundary condition from \cite[Section 3]{V87}. So we let $X =L^1(S \x V, \mu)$ where $S \subset \R^n$, $V\subset \R^n$ are locally compact in the induced topology and $\mu = \ld^n \x \rho$ where $\ld^n$ is the $n$-dimensional Lebesgue measure and $\rho$ is a locally integrable Borel measure on $V$. The linear transport equation is given as 
\[\frac{\partial f}{\partial t}(s,v,t)= -v \cdot \nabla_{\bf x} f(s,v,t) -h(s,v)f(s,v,t) +\int_V k(s,v,v')f(s,v',t)\,d\rho(v'), \quad {\bf x} = (s,v)\]
where $h:S \x V \to [0,\infty]$ and $k:S\x V \x V \to [0,\infty]$ are measurable functions.
 
Let $T_0$ denote the generator of the $C_0$-semigroup of free streaming $\sg{U_0}$ and $h$ denote the maximal multiplication operator associated with the function $h(s,v)$. In this case, we will assume that $T= T_0-h$, $D(T)=D(h)$, generates the substochastic $C_0$-semigroup $\sg{U}$ and $K$ is the positive operator defined by 
\[Kf(s,v):=\int_V k(s,v,v')f(s,v')\,d\rho(v'),\quad f \in D(K)=D(T)\]
with
\begin{equation*}\label{E-PotEgh}
\norm{hf} \leq - \int Tf\, d\mu,\quad \text{for all }f \in D(T)_+
\end{equation*}
and
\begin{equation}\label{E-PotEgk}
\int_V k(s,v,v')\, d\rho(v') \leq h(s,v) \qquad \mu\text{-a.e.}
\end{equation}
Under this setup, $T$ and $K$ satisfy the conditions of Kato's Theorem with perturbed semigroup $\sg{V}$ \cite[p.463]{V87}.

Next we consider a second measurable function $\tilde{h}(s,v) \geq 0$ and suppose that $\tilde{h}$ (the maximal multiplication operator with $\tilde{h}(s,v)$) is $U(\cdot)$-bounded (see \cite[Definition 1.2]{V86}) i.e. $\tilde{h}$ is $T$-bounded and there exist $\ap \in (0, \infty]$ , $\gamma \geq 0$ such that for all $f \in D(T)$ 
\[\int_0^\ap \norm{BU(t)f} \,dt \leq \gamma \norm{f}.\]
By \cite[Corollary 2.10]{V86}, $T_{\tilde{h}} = T-\tilde{h}$ generates a substochastic $C_0$-semigroup. Moreover for $f \in D(T)_+$,
\[ \norm{(h+\tilde{h})f} \leq -\int Tf \,d\mu+ \int \tilde{h}f \, d\mu = -\int T_{\tilde{h}}f \,d\mu.\]
and \[\norm{Kf} \leq \norm{(h+\tilde{h})f}.\]

Therefore, $T_{\tilde{h}}$ and $K$ also satisfy the conditions of Kato's Theorem with perturbed semigroup denoted $\sg{V_{\tilde{h}}}$. Since by assumption $\tilde{h}$ is $T$-bounded, we have $(D(T) \cap D(\tilde{h}))_+$ dense in $D(T)_+$, so by Proposition \ref{HonPot}, $\sg{V_{\tilde{h}}}$ is honest whenever $\sg{V}$ is.

In the case when we have equality in \eqref{E-PotEgk}, the background material is called a pure scatterer \cite[p.463]{V86}. Proposition \ref{HonPot} and \ref{DishonPot} tell us that adding a bounded $\tilde{h}$ to $h$ does not affect the honesty or dishonesty of the transport semigroup. In other words, a change from pure scatterer to impure does not affect honesty if the change is ``small" enough. 
\end{eg}
In the next section, we will look at how the abstract results in Sections \ref{SS-K-ASS} to \ref{S-HonPot} can be applied to study quantum dynamical semigroups.

\section{Quantum Dynamical Semigroups and Kato's Theorem}\label{S-KQDS}
Kato's original theorem on $L^1$ is connected to the study of stochastic processes and classical Markov semigroups. The extension of Kato's Theorem to abstract state spaces allows us to apply this theory to the non-commutative setting. In this section and the next, we will demonstrate an application of Kato's Theorem to quantum dynamical semigroups. We will begin by introducing a special class of quantum dynamical semigroups in this section and describe how they can be constructed using Kato's Theorem. As noted in the introduction, although Kato's methods have been employed by Davies to construct quantum dynamical semigroups in \cite{D77}, the theory of quantum dynamical semigroups has largely developed independently of Kato's Theorem. Hence, we will begin by presenting the theory of quantum dynamical semigroups independently of Kato's Theorem following the survey of Fagnola \cite{F99} in Section \ref{SS-SC} before applying Kato's Theorem in Section \ref{S-QDSKato}. In Section \ref{S-HTQDS}, we will investigate applications of honesty theory for quantum dynamical semigroups.  

\subsection{Quantum Dynamical Semigroups}\label{SS-SC} 
The brief summary of the theory of quantum dynamical semigroups presented in this section is based on \cite{F99} where Fagnola considers quantum dynamical semigroups defined on the space of bounded operators on a complex Hilbert space $\Hb$, $\lop(\Hb)$. In particular, he constructs a minimal quantum dynamical semigroup based on Chung's construction of the minimal solution of Feller-Kolmogorov equations for countable state Markov chains. Note that in the rest of this chapter, $\Hb$ will denote a complex Hilbert space and $\inp{\cdot,\cdot}$ will denote the inner product on $\Hb$. Also, $\lop(\Hb)_+$ is the cone consisting of $\Hb$-positive operators i.e.\ operators $T$ such that $\inp{Tx,x} \geq 0$ for all $x \in D(T)$.

We begin by presenting some preliminary information. First recall that $\loph$ has predual isometrically isomorphic to $\Tf$, the space of trace class operators on $\Hb$, equipped with the trace norm, $\norm{\cdot}_{tr}$. The duality is given by 
\[\inp{\rho,x}_{\Tf,\loph} = \Tr(\rho x),\qquad \rho \in \Tf, x \in \loph.\]
This duality will allow us to apply Kato's Theorem (on abstract state spaces) and its related results to the theory of quantum dynamical semigroups on $\loph$. This follows because the predual space $\Tf$ is the complexification of the space of self-adjoint trace-class operators $\Tsa$, which is a real ordered Banach space with trace norm additive on the positive cone, i.e.\ it is an abstract state space. The functional $\Psi$ which we saw in Section \ref{SS-K-ASS} is simply the trace functional in this case. 

The subspace of $\Tf$ consisting of the rank one operators $\ruv$, $u, v \in \Hb$, defined by 
\[\ruv \varphi:= \inp{v,\varphi}u,\qquad \varphi \in \Hb\]
will play an important role. 
We will require the following lemma about the convergence of rank one operators.
\begin{lem}\label{conroo} Suppose $u, v, (u_n), (v_n) \in \Hb$ satisfy $\norm{u_n - u} \to 0$ and $\norm{v_n - v} \to 0$ as $n \to \infty$. Then $\norm{\ruv - \roo{u_n}{v_n} }_{tr} \to 0$ as $n \to \infty$.  
\end{lem}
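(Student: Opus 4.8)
The plan is to prove this by a direct estimate on the trace norm of the difference, using the triangle inequality to reduce to the case where only one of the two vectors is perturbed. First I would observe the elementary algebraic identity
\[
\roo{u_n}{v_n} - \ruv = \roo{u_n}{v_n} - \roo{u}{v_n} + \roo{u}{v_n} - \ruv = \roo{u_n - u}{v_n} + \roo{u}{v_n - v},
\]
which follows immediately from the bilinearity of the map $(x,y) \mapsto \roo{x}{y}$ in the sense that $\roo{x}{y}$ is linear in $x$ and conjugate-linear in $y$. Applying the triangle inequality for $\norm{\cdot}_{tr}$ then reduces the problem to controlling the trace norm of a single rank-one operator $\roo{x}{y}$ in terms of $\norm{x}$ and $\norm{y}$.

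The key computational ingredient is the formula $\norm{\roo{x}{y}}_{tr} = \norm{x}\,\norm{y}$ for the trace norm of a rank-one operator. To see this, note that for $x, y \neq 0$ the operator $\roo{x}{y}$ has a single nonzero singular value: its absolute value $\abs{\roo{x}{y}} = (\roo{y}{x}\,\roo{x}{y})^{1/2}$ acts as $\varphi \mapsto \norm{x}^2 \inp{y,\varphi} y / \norm{y}^2$ on the span of $y$ and vanishes on its orthogonal complement, giving the single nonzero eigenvalue $\norm{x}\,\norm{y}$, whence the trace norm (the sum of singular values) equals $\norm{x}\,\norm{y}$. With this in hand, the two terms above are bounded by $\norm{u_n - u}\,\norm{v_n}$ and $\norm{u}\,\norm{v_n - v}$ respectively, so that
\[
\norm{\roo{u_n}{v_n} - \ruv}_{tr} \leq \norm{u_n - u}\,\norm{v_n} + \norm{u}\,\norm{v_n - v}.
\]

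Finally I would let $n \to \infty$. The hypothesis $\norm{v_n - v} \to 0$ implies $(\norm{v_n})$ is a bounded sequence, so the first term tends to $0$ because $\norm{u_n - u} \to 0$, while the second tends to $0$ because $\norm{v_n - v} \to 0$ and $\norm{u}$ is a fixed constant. Hence the right-hand side vanishes in the limit, giving $\norm{\roo{u_n}{v_n} - \ruv}_{tr} \to 0$ as desired. I do not anticipate a genuine obstacle here; the only point requiring minor care is the rank-one trace-norm identity $\norm{\roo{x}{y}}_{tr} = \norm{x}\,\norm{y}$, which is standard but should be stated cleanly since it is the quantitative heart of the argument, and one should remember to handle the degenerate cases $x = 0$ or $y = 0$ where the operator is simply zero.
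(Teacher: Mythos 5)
Your proof is correct, and the paper in fact states Lemma \ref{conroo} without any proof, so there is no authorial argument to compare against; your decomposition $\roo{u_n}{v_n}-\ruv = \roo{u_n-u}{v_n}+\roo{u}{v_n-v}$ combined with the identity $\norm{\roo{x}{y}}_{tr}=\norm{x}\,\norm{y}$ is the standard route the authors evidently had in mind. One small slip in an intermediate formula: with $T=\roo{x}{y}$ one has $T^{*}T = \norm{x}^2\norm{y}^2 P_y$ where $P_y$ is the orthogonal projection onto the span of $y$, so $\abs{T}=(T^{*}T)^{1/2}=\norm{x}\,\norm{y}\,P_y$, i.e.\ $\varphi\mapsto \frac{\norm{x}}{\norm{y}}\inp{y,\varphi}y$; the map $\varphi\mapsto \norm{x}^2\inp{y,\varphi}y/\norm{y}^2$ that you wrote is $\norm{x}^2P_y$ and would give eigenvalue $\norm{x}^2$ rather than the $\norm{x}\,\norm{y}$ you correctly assert. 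The stated singular value, the resulting bound $\norm{\roo{u_n}{v_n}-\ruv}_{tr}\leq \norm{u_n-u}\,\norm{v_n}+\norm{u}\,\norm{v_n-v}$, and the limiting argument are all fine.
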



Now we are ready to define quantum dynamical semigroups.

\begin{defn}\label{QDSdef}\cite[Definition 3.1]{F99} Let $\A$ denote a $W^*$-algebra of operators acting on a Hilbert space $\Hb$. A quantum dynamical semigroup on $\A$ is a family $\sg{\Tc}$ of bounded operators on $\A$ with the following properties:
\begin{enumerate}
\item $\Tc(0)a = a$ for all $a \in \A$.
\item $\Tc(t+s)a = \Tc(t)\Tc(s)a$ for all $s,t \geq 0$ and all $a \in \A$.
\item $\Tc(t)$ is completely positive for all $t \geq 0$.
\item $\Tc(t)$ is a $\sm$-weakly continuous operator in $\A$ for all $t \geq 0$.
\item For every $a \in \A$, the map $t \mapsto \Tc(t)a$ is continuous with respect to the $\sm$-weak topology on $\A$.
\end{enumerate}
\end{defn}

\begin{defn}\cite[Definition 3.2]{F99} The infinitesimal generator of the quantum dynamical semigroup $\sg{\Tc}$ is the operator $\G^*$ whose domain $D(\G^*)$ is the space of elements $a \in \A$ for which there exists an element $b \in \A$ such that $b = \lim_{t \to 0} \frac{\Tc(t)a-a}{t}$ in the $\sm$-weak topology and $\G^* a=b$. 
\end{defn}

Since the quantum dynamical semigroup $\sg{\Tc}$ on $\loph$ satisfies conditions (iv) and (v) of Definition \ref{QDSdef}, it follows that $\sg{\Tc}$ induces a
predual semigroup $\sg{S}$ on $\Tf$ defined by 
\[\inp{\SC(t)\rho,x}_{\Tf, \loph} = \inp{\rho, T(t)x}_{\Tf, \loph} \qquad\text{for all }\rho \in \Tf, x \in \loph, t\geq 0.\]
Equivalently, this may be stated via the generator of the semigroup, i.e.\ $\G$ is the generator of $\sg{\SC}$ if and only if $\G^*$ is the generator of $\sg{\Tc}$
\cite[Theorem 1.2.3]{vN}, \cite[p.252]{BR}. If $\sg{\Tc}$ is a quantum dynamical semigroup, the predual semigroup is in fact, strongly continuous. This follows from condition (v) in Definition \ref{QDSdef}. To ensure that the notation in this section concurs with those in the previous sections, we will always denote the generator of a quantum dynamical semigroup as an adjoint operator, for example $\G^*$; more precisely, as the adjoint of the generator of the predual semigroup.

The special class of quantum dynamical semigroups we are interested in satisfies the following premise, which we will assume holds for the remainder of this paper unless stated otherwise:
\begin{hyp}\label{QDShyp} Suppose $Y$ generates a $C_0$-semigroup of contractions $\sg{P}$ in $\Hb$. Suppose also the sequence of operators $(L_l)_{l=1}^\infty$
are such that $D(L_l) \supseteq D(Y)$ and for all $u \in D(Y)$, we have 
\begin{equation}\label{E-QDS}
\inp{u,Yu}+\inp{Yu,u}+ \sum_{l=1}^\infty \inp{L_l u,L_lu} \leq 0.
\end{equation}
\end{hyp}
It will also prove useful later to consider the sesquilinear form $\Ups(x)$, $x \in \loph$ with domain $D(Y) \x D(Y) \subseteq \Hb \x \Hb$ given by 
\begin{equation}\label{E-QDSGen}
\Ups(x)[v, u] = \inp{v,xYu}+\inp{Yv,xu}+ \sum_{l=1}^\infty \inp{L_lv,xL_l u}.
\end{equation}
Assuming Premise \ref{QDShyp} holds, Fagnola \cite[Chapter 3]{F99}, shows that: 
\begin{prop}\label{QDSGen} \cite[Theorem 3.22]{F99}
There exists a minimal quantum dynamical semigroup $\sg{T}$ satisfying 
\begin{equation}\label{E-QDSGenT}
\inp{v,(T(t)x) u} = \inp{v,xu}+\int_0^t \Ups(T(s)x)[v,u]\,ds \quad\text{for all }u,v \in D(Y)
\end{equation}
 and $T(t)\One \leq \One$  for all $t \geq 0$. The semigroup is minimal in the sense that for any quantum dynamical semigroup $\sg{U}$ on $\loph$ which is a solution to \eqref{E-QDSGenT} and for any $x \in \loph_+$, we have $T(t)x \leq U(t)x$ for all $t \geq 0$.
\end{prop}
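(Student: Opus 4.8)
The plan is to realise $\sg{T}$ as the adjoint of the minimal substochastic semigroup furnished by Kato's Theorem on the predual $\Tf$. Recall that $\Tf$ is the complexification of the abstract state space $\Tsa$ and that here $\Psi = \Tr$. On $\Tf$ I would introduce the \emph{free} generator $A$, namely the generator of the conjugation semigroup $\rho \mapsto P(t)\rho P(t)^*$, and the \emph{jump} operator $B\rho = \sum_{l=1}^\infty L_l \rho L_l^*$. Strong continuity of $\rho\mapsto P(t)\rho P(t)^*$ on $\Tf$ follows from contractivity of $\sg{P}$ together with Lemma \ref{conroo} applied to the dense set of rank one operators, on which $P(t)\roo{u}{v}P(t)^* = \roo{P(t)u}{P(t)v}$. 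The rank one operators $\roo{u}{v}$ with $u,v\in D(Y)$ should lie in $D(A)$ with $A\roo{u}{v} = \roo{Yu}{v}+\roo{u}{Yv}$ and form a core for $A$; by Cauchy--Schwarz and \eqref{E-QDS} the series defining $B\roo{u}{v}$ converges in trace norm, so that $D(A)\subseteq D(B)$ on this core.

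First I would check the three hypotheses of Theorem \ref{K-Ass}. For (i), contractivity of $\sg{P}$ gives $\Tr(P(t)\rho P(t)^*) = \Tr(\rho\, P(t)^*P(t)) \le \Tr(\rho)$ for positive $\rho$, so $A$ generates a substochastic semigroup. Condition (ii) is immediate since conjugation by each $L_l$ preserves positivity. For (iii), evaluating on a rank one operator gives $\inp{\Psi,(A+B)\roo{u}{u}} = \inp{u,Yu}+\inp{Yu,u}+\sum_l \inp{L_lu,L_lu}$, which is exactly the left-hand side of \eqref{E-QDS} and hence non-positive; the general positive case follows by writing $\rho = \sum_n \ld_n \roo{e_n}{e_n}$ spectrally and passing to the limit. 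Theorem \ref{K-Ass} then yields an extension $G\supseteq A+B$ generating the minimal substochastic $C_0$-semigroup $\sg{S}$ on $\Tf$, and I would set $T(t) := S(t)^*$.

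It remains to read off the stated properties of $\sg{T}$. Substochasticity of $\sg{S}$ gives $\inp{\rho, T(t)\One} = \Tr(S(t)\rho) \le \Tr(\rho) = \inp{\rho,\One}$ for all positive $\rho$, whence $T(t)\One \le \One$. For the integral identity, a direct trace computation shows $\Ups(y)[v,u] = \inp{(A+B)\roo{u}{v},\, y}$ and $\inp{v,(T(t)x)u} = \inp{S(t)\roo{u}{v}, x}$ for $u,v\in D(Y)$; feeding $w=\roo{u}{v}\in D(A)\subseteq D(G)$ into the integrated semigroup identity $S(t)w - w = \int_0^t S(s)Gw\,ds$ and pairing with $x$ reproduces \eqref{E-QDSGenT} exactly. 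Complete positivity of each $T(t)$ --- the feature that distinguishes a genuine quantum dynamical semigroup from a merely positive one and which Kato's abstract theory does not supply directly --- I would extract from the resolvent expansion $R(\ld,G) = R(\ld,A)\sum_{k} (BR(\ld,A))^k$ of Theorem \ref{K-Ass}: each $R(\ld,A)$ and each power of $BR(\ld,A)$ is completely positive, complete positivity passes to the strong limit of the partial sums, and is preserved by the Post--Widder inversion recovering $S(t)$, hence $T(t)$. Minimality then follows because any quantum dynamical semigroup $\sg{U}$ solving \eqref{E-QDSGenT} induces a predual semigroup $\sg{U_*}$ on $\Tf$ whose generator extends $A+B$ (run the above computation in reverse on the rank one core), so the Kato minimality in Theorem \ref{K-Ass} gives $S(t)\rho\le U_*(t)\rho$ on the positive cone, which dualises to $T(t)x\le U(t)x$ for $x\in\loph_+$.

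I expect the main obstacle to be the domain analysis underlying $A$: verifying that the generator of $\rho\mapsto P(t)\rho P(t)^*$ really acts as $\roo{u}{v}\mapsto \roo{Yu}{v}+\roo{u}{Yv}$ on the rank one operators built from $D(Y)$, that these form a core, and that $D(A)\subseteq D(B)$, so that all of the identities above (the computation of $A\roo{u}{v}$, the identity $\Ups(y)[v,u]=\inp{(A+B)\roo{u}{v},y}$, and the spectral approximation in (iii)) are legitimate. The convergence of $\sum_l L_l\rho L_l^*$ in trace norm and the interchange of this sum with the trace are the delicate technical points there; everything else is a translation of the abstract results of Section \ref{SS-K-ASS} into the language of $\Tf$ and $\loph$.
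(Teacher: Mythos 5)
Your overall strategy is sound, but note that it is not the route the paper takes for this statement: Proposition \ref{QDSGen} is imported verbatim from \cite[Theorem 3.22]{F99}, where the minimal semigroup is built by a monotone iteration scheme (Chung's construction for Feller--Kolmogorov equations) entirely on the $\loph$ side, with no reference to Kato's Theorem. What you propose --- construct the predual semigroup via Theorem \ref{K-Ass} with $A$ generating $\rho\mapsto P(t)\rho P(t)^*$ and $B$ the Kraus-type jump operator, then dualise --- is essentially the content of the paper's Section \ref{S-QDSKato} (Lemmas \ref{qdsA}--\ref{J}, Corollary \ref{Bext}, Proposition \ref{QDSKato}, Theorem \ref{QDS-K}), which the paper uses to show that the two constructions coincide rather than to prove existence in the first place. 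Your route has real merits: minimality, the integral identity and $T(t)\One\le\One$ all fall out of the abstract theory, and your complete-positivity argument via the resolvent expansion and Post--Widder inversion is a genuine addition, since Kato's Theorem by itself only supplies positivity. But it leans on exactly the domain analysis you flag as the ``main obstacle'', and that is where the one real gap sits.

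The gap: you verify Kato's condition (iii) on rank one operators $\roo{u}{u}$, $u\in D(Y)$, and propose to reach general $\rho\in D(A)_+$ by writing $\rho=\sum_n\ld_n\roo{e_n}{e_n}$ spectrally and passing to the limit. This does not work as stated: the eigenvectors of an arbitrary $\rho\in D(A)_+$ need not lie in $D(Y)$, and even when they do, the spectral partial sums need not converge to $\rho$ in the graph norm of $A$, which is what you would need in order to pass $\Tr(A\,\cdot)$ and $\Tr(B\,\cdot)$ to the limit. The same issue affects the claim $D(A)\subseteq D(B)$: trace-norm convergence of $\sum_l L_l\rho L_l^*$ is only clear on the rank one span, whereas Theorem \ref{K-Ass} needs $B$ defined, positive and satisfying the trace inequality on all of $D(A)$, not merely on a core. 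The paper, following Davies, repairs both points at once: $B$ is first defined on $\Dc_s=\pi(\Tsa)$ and shown to be $A$-bounded there (Lemma \ref{qdsB}, Remark \ref{BAbdd}), extended to $D(A)$ as $B'=J_1(I-A)$ (Corollary \ref{Bext}), and the inequality $\Tr(A\rho+B'\rho)\le 0$ for general $\rho\in D(A)_+$ is obtained via the regularisation $\rho_\ep=R(1,\ep Y)\rho R(1,\ep Y)^*$, which lands in $(\Dc_s)_+$ and converges to $\rho$ together with $A\rho_\ep\to A\rho$. Replace your spectral-decomposition step by this resolvent regularisation and the rest of your argument goes through.
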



Equation \eqref{E-QDSGenT} can be restated in terms of the generator of the quantum dynamical semigroup:
\begin{prop}\label{GenForm} A contractive quantum dynamical semigroup $\sg{\Tc}$ satisfies \eqref{E-QDSGenT} if and only if its generator $\G^*$ satisfies 
\begin{equation}\label{E-LinbGen}
\inp{v, (\G^* x) u} = \Ups(x)[v,u]\quad\text{for all }u,v \in D(Y),x \in D(\G^*).
\end{equation}
\end{prop}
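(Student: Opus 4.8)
The plan is to move between the integral form \eqref{E-QDSGenT} and the generator form \eqref{E-LinbGen} via the weak fundamental theorem of calculus for the semigroup, after first rewriting the sesquilinear form $\Ups(\cdot)[v,u]$ as a duality pairing against one fixed trace-class operator. The preparatory step is the rank-one identity $\inp{\roo{u}{v},y}_{\Tf,\loph}=\inp{v,yu}$ for $y\in\loph$, $u,v\in\Hb$, which is a direct trace computation. Using it together with \eqref{E-QDSGen}, I would rewrite, for $u,v\in D(Y)$, $\Ups(y)[v,u]=\inp{\omega_{u,v},y}$ where $\omega_{u,v}:=\roo{Yu}{v}+\roo{u}{Yv}+\sum_{l}\roo{L_lu}{L_lv}$. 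The series converges in trace norm: by Lemma \ref{conroo} each summand has trace norm $\norm{L_lu}\,\norm{L_lv}$, and $\sum_l\norm{L_lu}\,\norm{L_lv}\le(\sum_l\norm{L_lu}^2)^{1/2}(\sum_l\norm{L_lv}^2)^{1/2}<\infty$, the finiteness coming from \eqref{E-QDS} in Premise \ref{QDShyp}. Hence $\omega_{u,v}\in\Tf$ and $y\mapsto\Ups(y)[v,u]=\inp{\omega_{u,v},y}$ is $\sm$-weakly continuous with $\abs{\Ups(y)[v,u]}\le\norm{\omega_{u,v}}_{tr}\norm{y}$.

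For the direction \eqref{E-QDSGenT}$\Rightarrow$\eqref{E-LinbGen}, I would fix $x\in D(\G^*)$, divide \eqref{E-QDSGenT} by $t$ and let $t\to0^+$. Testing the defining $\sm$-weak limit of $\G^*$ against $\roo{u}{v}\in\Tf$ shows the left-hand side tends to $\inp{\roo{u}{v},\G^*x}=\inp{v,(\G^*x)u}$. On the right-hand side the integrand $s\mapsto\Ups(\Tc(s)x)[v,u]=\inp{\omega_{u,v},\Tc(s)x}$ is continuous at $s=0$, since $s\mapsto\Tc(s)x$ is $\sm$-weakly continuous by Definition \ref{QDSdef}(v) and $\omega_{u,v}\in\Tf$; therefore $t^{-1}\int_0^t\to\Ups(x)[v,u]$, which is exactly \eqref{E-LinbGen}.

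For the converse \eqref{E-LinbGen}$\Rightarrow$\eqref{E-QDSGenT}, I would first treat $x\in D(\G^*)$. Standard adjoint-semigroup calculus (differentiating $t\mapsto\inp{\rho,\Tc(t)x}=\inp{\SC(t)\rho,x}$ and using that $\sg{\SC}$ is a $C_0$-semigroup) gives $\Tc(s)x\in D(\G^*)$ with $\G^*\Tc(s)x=\Tc(s)\G^*x$ and, for every $\rho\in\Tf$, $\inp{\rho,\Tc(t)x}-\inp{\rho,x}=\int_0^t\inp{\rho,\G^*\Tc(s)x}\,ds$. Taking $\rho=\roo{u}{v}$ and inserting \eqref{E-LinbGen} applied to $\Tc(s)x\in D(\G^*)$ converts the integrand into $\Ups(\Tc(s)x)[v,u]$, which is \eqref{E-QDSGenT} for $x\in D(\G^*)$. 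To reach arbitrary $x\in\loph$ I would approximate by $x_n:=nR(n,\G^*)x\in D(\G^*)$; contractivity gives $\norm{x_n}\le\norm{x}$ and $nR(n,\G)\rho\to\rho$ in $\Tf$ gives $x_n\to x$ $\sm$-weakly. Passing to the limit in \eqref{E-QDSGenT} for $x_n$, the two pointwise terms converge by $\sm$-weak continuity of $\Tc(t)$, and the integral converges by dominated convergence using the uniform bound $\abs{\Ups(\Tc(s)x_n)[v,u]}\le\norm{\omega_{u,v}}_{tr}\norm{x}$ from contractivity.

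The step I expect to be the real obstacle is the preparatory one: showing that $\sum_l\roo{L_lu}{L_lv}$ converges in trace norm and hence that $\Ups(\cdot)[v,u]$ is genuinely $\sm$-weakly continuous duality against the fixed element $\omega_{u,v}$. This is where the admissibility condition \eqref{E-QDS} of Premise \ref{QDShyp} is used, and where the implicit interchange of the $l$-summation with the various limits is justified once and for all. After that, the uniform bound furnished by contractivity of $\sg{\Tc}$ renders every remaining limit interchange routine, and both implications reduce to the standard weak calculus of semigroups.
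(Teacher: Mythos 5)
Your proof is correct. The forward direction and the key preparatory step coincide with the paper's: both rewrite $\Ups(y)[v,u]$ as the trace pairing of $y$ against the fixed element $\roo{Yu}{v}+\roo{u}{Yv}+\sum_l\roo{L_lu}{L_lv}$, with trace-norm convergence of the series secured by the Cauchy--Schwarz estimate \eqref{E-LL} coming from \eqref{E-QDS}, and then differentiate at $t=0$ using $\sm$-weak continuity of the orbit (the paper phrases the continuity of the summed integrand via the Weierstrass M-test rather than via duality against $\omega_{u,v}$, but this is the same estimate). The genuine divergence is in the converse. The paper invokes the identity $\Tc(t)x-x=\G^*\,\mathrm{weak}^*\!\int_0^t\Tc(s)x\,ds$ from \cite[Proposition 1.2.2]{vN}, valid for \emph{every} $x\in\loph$, applies \eqref{E-LinbGen} at the single domain element $\mathrm{weak}^*\!\int_0^t\Tc(s)x\,ds$, and then pulls the trace through the weak$^*$ integral; this settles arbitrary $x$ in one stroke but leans on the weak$^*$-integral calculus for adjoint semigroups. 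You instead prove \eqref{E-QDSGenT} first for $x\in D(\G^*)$ by applying \eqref{E-LinbGen} along the orbit $\Tc(s)x$, and then reach general $x$ by the resolvent approximation $x_n=nR(n,\G^*)x$ together with dominated convergence under the uniform bound $\norm{\omega_{u,v}}_{tr}\norm{x}$. Your route uses only the most elementary weak calculus plus a density argument, at the cost of an extra limiting step; the paper's is shorter once the cited weak$^*$-integral facts are granted. Both are complete.
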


\begin{proof} Suppose $\sg{\Tc}$ satisfies \eqref{E-QDSGenT}. Fix $u,v \in D(Y)$ and $x \in D(\G^*)$. Since $\Tr(x\ruv) = \inp{v,xu}$, \eqref{E-QDSGenT} can be rewritten as $\Tr((\Tc(t)x)\ruv)-\Tr(x\ruv)=\int_0^t \Ups(\Tc(s)x)[v,u]\,ds$. Hence, 
\begin{align}\label{E-GenForm}\frac{1}{t}\Tr&((\Tc(t)x-x)\ruv) \notag\\
&= \frac{1}{t}\int_0^t \left(\inp{Yv,(\Tc(s)x)u}+ \inp{v,(\Tc(s)x)Yu}+ \sum_{l=1}^\infty \inp{L_lv,(\Tc(s)x)L_l u}\right)\,ds.
\end{align}
The continuity of $t \mapsto \Tc(t)x$ with respect to the $\sm$-weak topology implies that the maps $s \mapsto \inp{Yv,(\Tc(s)x)u}$, $s \mapsto \inp{v,(\Tc(s)x)Yu}$, $s \mapsto \inp{L_lv,(\Tc(s)x)L_l u}$, $l \in \N$
are continuous. Moreover, since $\sg{\Tc}$ is contractive, we have for all $l \in \N$, $\abs{\inp{L_lv,(\Tc(s)x)L_l u}} \leq \norm{x}\norm{L_lu}\norm{L_lv}$. But
by \eqref{E-QDS}, we have 
\begin{equation}\label{E-LL}
\sum_{l=1}^\infty \norm{L_lv}\norm{L_l u} \leq \left(\sum_{l=1}^\infty \norm{L_lv}^2\right)^{\frac{1}{2}}\left(\sum_{l=1}^\infty \norm{L_l u}^2\right)^{\frac{1}{2}}\leq (-2\Real\inp{v,Yv})^{\frac{1}{2}}(-2\Real\inp{u,Yu})^{\frac{1}{2}}.
\end{equation}
Thus, by the Weierstrass M-test, the map $s\mapsto \sum_{l=1}^\infty\inp{L_lv,(\Tc(s)x)L_l u}$ is continuous. Therefore we can let $t \to 0$ in \eqref{E-GenForm} to obtain $\inp{v, (\G^* x) u} = \Ups(x)[v,u]$.

Conversely, suppose $\sg{\Tc}$ satisfies \eqref{E-LinbGen}. We begin by observing that the form $\Ups(x)[v,u]$, $x \in \loph, u, v \in D(Y)$ can be restated as 
\begin{align}\label{E-UpsTrace}
\Ups(x)[v,u] &= \Tr\left(x\left(\roo{Yu}{v}+\roo{u}{Yv}+\sum_{l=1}^\infty \roo{L_lu}{L_lv}\right)\right)
\end{align}
since $\sum_{l=1}^\infty \roo{L_lu}{L_lv}$ converges in trace norm by \eqref{E-LL}. On the other hand, from \cite[Proposition 1.2.2]{vN}, we have for $x \in \loph$,
\[\Tc(t)x-x = \G^* \text{weak}^*\int_0^t \Tc(s)x\,ds\]
where $\text{weak}^*\int_0^t \Tc(s)x\,ds$ denotes the weak$^*$ integral of $\Tc(s)x$.
Hence 
\begin{align*} 
&\inp{v, (\Tc(t)x)u}-\inp{v,xu}\\
=& \inp{v,\left(\G^* \text{weak}^*\int_0^t \Tc(s)x\,ds\right)u}\\
=&\Ups\left( \text{weak}^*\int_0^t \Tc(s)x\,ds \right)[v,u]\qquad\text{(by assumption)}\\
=&\Tr\left(\left(\text{weak}^*\int_0^t \Tc(s)x\,ds\right)\left(\roo{Yu}{v}+\roo{u}{Yv}+\sum_{l=1}^\infty \roo{L_lu}{L_lv}\right)\right) \quad\text{ (by \eqref{E-UpsTrace})}\\
=&\int_0^t\Tr\left( (\Tc(s)x)\left(\roo{Yu}{v}+\roo{u}{Yv}+\sum_{l=1}^\infty \roo{L_lu}{L_lv}\right)\right)\,ds \quad\text{ (by definition)}\\
=&\int_0^t \Ups(\Tc(s)x)[v,u]\,ds \qquad\text{ (by \eqref{E-UpsTrace})}. 
\end{align*}
Therefore $\sg{\Tc}$ satisfies \eqref{E-QDSGenT}.
\end{proof}

\begin{defn}\label{DefLind}
We say that the generator $\G^*$ of a quantum dynamical semigroup can be represented in Lindblad form if there exists operators $Y,$ $(L_l)$ on $\Hb$ satisfying Premise \ref{QDShyp} such that \[\inp{v,(\G^*x)u}= \Ups(x)[v,u]\]
for all $x \in D(\G^*)$ and all $u,v \in D(Y)$.
\end{defn}

\subsection{Constructing Quantum Dynamical Semigroups via Kato's Theorem}\label{S-QDSKato}
Now we look at the construction via Kato's Theorem, of a quantum dynamical semigroup whose generator can be represented in Lindblad form. In particular, we will show that the minimal quantum dynamical semigroup identified in Proposition \ref{QDSGen} coincides with that constructed via Kato's Theorem. Although this application of Kato's methods to quantum dynamical semigroups were also noted by others including Davies \cite{D77} and Arlotti, Lods and Mokhtar-Kharroubi \cite{ALM}, we have yet to find any literature which applies Kato's Theorem directly to quantum dynamical semigroups or which actually shows that the two methods are equivalent. We will prove this equivalence in this section both for completeness and as preparation for the next section on applications of honesty theory. Since Kato's Theorem is stated for real spaces, we will restrict to the space of self-adjoint trace class operators, $\Tsa$.

First, consider the semigroup $\sg{U}$ in $\Tsa$ defined by $U(t)\rho = P(t)\rho P(t)^*$. Note that since $\sg{P}$ is contractive, so is $(P(t)^*)_{t \geq 0}$. It turns out that $\sg{U}$ is also a $C_0$-semigroup of contractions with generator we will denote by $A$ (see for example \cite[Section I.3.16]{EN}). In \cite{D77}, Davies considers the case where we have equality in equation \eqref{E-QDS} and shows that the operator $A$ and an appropriately defined $B$ (see Lemma \ref{qdsB}, Corollary \ref{Bext}) satisfy in our terminology, the conditions in Theorem \ref{K-Ass}. His methods also hold for the more general case (with inequality in \eqref{E-QDS}) with minor modifications. We describe his methods below as this setup will be used in Section \ref{S-HTQDS} as well.

To determine the domain of the generator $A$, Davies introduces a positive, one-to-one map 
\[\pi:\Tsa \to \Tsa, \quad \pi(\rho) = R(1,Y)\rho R(1, Y)^*\]
and considers the subspace $\Dc_s:=\pi(\Tsa)$. Then \cite[Lemma 2.1]{D77} (restated as Lemma \ref{qdsA}) holds in this case as well since the inequality \eqref{E-QDS} has no role in the proof.
\begin{lem}\label{qdsA} \cite[Lemma 2.1]{D77}
The domain $\Dc_s$ is dense in $\Tsa$. Let $\rho \in \Dc_s$ and $\ep >0$. Then there exist $\rho_1,\rho_2 \in (\Dc_s)_+:=\Dc_s \cap \Tsa_+$ such that 
\begin{equation}\label{E-rho}
\rho = \rho_1-\rho_2, \quad \norm{\rho_1}_{tr}+\norm{\rho_2}_{tr}<\norm{\rho}_{tr} +\ep. 
\end{equation}
Moreover, $\Dc_s$ is a core for $A$ and for all $\rho \in \Dc_s$
\begin{equation}\label{E-qdsA}
A\rho= Y\rho+\rho Y^*
\end{equation}
in the sense that $Y\rho$ is a trace class operator while $\rho Y^*$ is a restriction of the operator $(Y\rho)^*$ which is also trace class.  
\end{lem}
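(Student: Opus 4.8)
The plan is to prove the four assertions — density of $\Dc_s$, the approximate decomposition \eqref{E-rho}, the formula \eqref{E-qdsA}, and the core property — in that order. Throughout I would exploit that $\sg{P}$ is a contraction semigroup, so $1 \in \rho(Y)$, $\norm{R(1,Y)} \leq 1$, and $R(1,Y)$ has dense range $D(Y)$, together with the resolvent identity $YR(1,Y) = R(1,Y) - I$ and the commutation $P(t)R(1,Y) = R(1,Y)P(t)$ (both are functions of $Y$; e.g.\ $R(1,Y) = \int_0^\infty e^{-s}P(s)\,ds$). For density I would recall that the finite-rank self-adjoint operators are trace-norm dense in $\Tsa$ and that each is a real-linear combination of rank-one projections $\roo{f}{f}$. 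Given such an $f$, density of $D(Y)$ supplies $g \in D(Y)$ with $\norm{g-f}$ small; writing $g = R(1,Y)h$ with $h = (1-Y)g$ gives $\roo{g}{g} = \pi(\roo{h}{h}) \in \Dc_s$, and Lemma \ref{conroo} makes $\roo{g}{g}$ trace-norm close to $\roo{f}{f}$. As $\pi$ is linear, $\Dc_s$ is a subspace containing trace-norm approximants to a dense set, hence is dense.

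For the decomposition the key idea is to truncate the spectral expansion of the \emph{pre-image} $\sigma$, not of $\rho$ itself. Write $\rho = \pi(\sigma)$ with $\sigma \in \Tsa$ and take its spectral decomposition $\sigma = \sum_k c_k \roo{e_k}{e_k}$; truncating at level $N$ gives $\sigma = \sigma' + \sigma''$ with $\sigma'$ finite rank and $\norm{\sigma''}_{tr} < \ep/2$ for $N$ large. The operator $\rho' := \pi(\sigma')$ is finite rank and has range in $D(Y)$, so its operator Jordan parts $\rho'_\pm$ are finite rank with eigenvectors in $D(Y)$, whence $\rho'_\pm \in (\Dc_s)_+$ with $\norm{\rho'_+}_{tr} + \norm{\rho'_-}_{tr} = \norm{\rho'}_{tr} < \norm{\rho}_{tr} + \ep/2$ (since $\pi(\sigma') \to \pi(\sigma)$ in trace norm). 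For the small remainder $w := \pi(\sigma'')$ I would instead use the Jordan decomposition of $\sigma''$: $w = \pi(\sigma''_+) - \pi(\sigma''_-)$ with $\pi(\sigma''_\pm) \in (\Dc_s)_+$ and, because $\pi$ is positive with $R(1,Y)^*R(1,Y) \leq I$, $\norm{\pi(\sigma''_+)}_{tr} + \norm{\pi(\sigma''_-)}_{tr} = \Tr\!\big(R(1,Y)^*R(1,Y)\,|\sigma''|\big) \leq \norm{\sigma''}_{tr} < \ep/2$. Setting $\rho_1 = \rho'_+ + \pi(\sigma''_+)$ and $\rho_2 = \rho'_- + \pi(\sigma''_-)$ then gives $\rho = \rho_1 - \rho_2$ exactly with $\rho_1,\rho_2 \in (\Dc_s)_+$ and $\norm{\rho_1}_{tr} + \norm{\rho_2}_{tr} < \norm{\rho}_{tr} + \ep$.

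For the action of $A$, I would fix $\rho = \pi(\sigma)$ and note first that $Y\rho = (R(1,Y)-I)\sigma R(1,Y)^*$ and $\rho Y^* = R(1,Y)\sigma(R(1,Y)^*-I)$ are trace class by the resolvent identity, so the right-hand side of \eqref{E-qdsA} is meaningful and $\rho Y^*$ is the stated restriction of $(Y\rho)^*$. Splitting $\tfrac1t(U(t)\rho - \rho) = \tfrac{P(t)-I}{t}\rho\,P(t)^* + \rho\,\tfrac{P(t)^*-I}{t}$ and using $\tfrac{P(t)-I}{t}R(1,Y) = \big(\tfrac1t\int_0^t P(s)\,ds\big)YR(1,Y) \to YR(1,Y)$ strongly (with $P(t)^* \to I$ strongly), I would argue that the difference quotient converges in trace norm to $Y\rho + \rho Y^*$, so $\Dc_s \subseteq D(A)$ and \eqref{E-qdsA} holds.

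Finally, the commutation relation yields $U(t)\pi(\sigma) = R(1,Y)P(t)\sigma P(t)^* R(1,Y)^* = \pi(U(t)\sigma) \in \Dc_s$, so $\Dc_s$ is a $U(\cdot)$-invariant subspace of $D(A)$ that is dense in $\Tsa$; by the standard core criterion for $C_0$-semigroups (a dense, semigroup-invariant subspace of the domain is a core) it is a core for $A$. I expect the two steps needing the most care to be the bookkeeping in the decomposition — in particular the observation that truncating $\sigma$ rather than $\rho$ keeps the remainder's $\pi$-decomposition norm controlled by the small quantity $\norm{\sigma''}_{tr}$ — and the upgrade of the generator limit from weak to trace-norm convergence, which relies on the fixed trace-class factors and the uniform bounds $\norm{P(t)},\norm{R(1,Y)} \leq 1$.
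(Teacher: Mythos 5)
Your argument is correct. The paper itself does not prove this lemma --- it is quoted from Davies \cite[Lemma 2.1]{D77} with the remark that the proof there carries over unchanged --- so there is no internal proof to compare against; your reconstruction follows what is essentially the standard (and Davies's) route: pull everything back through $\pi$, apply the spectral/Jordan decomposition to the preimage $\sigma$ rather than to $\rho$ so that the remainder is controlled by $\Tr\bigl(R(1,Y)^*R(1,Y)\,|\sigma''|\bigr)\leq\norm{\sigma''}_{tr}$, and read off \eqref{E-qdsA} from the identity $YR(1,Y)=R(1,Y)-I$. The only points worth making explicit in a written version are (a) that $(P(t)^*)_{t\geq 0}$ is itself strongly continuous because $\Hb$ is reflexive, which you need in order to pass from $\tfrac{1}{t}(P(t)-I)\rho\to Y\rho$ in trace norm to $\tfrac{1}{t}(P(t)-I)\rho P(t)^*\to Y\rho$ in trace norm, and (b) the standard fact that $T_t\tau\to T\tau$ in trace norm whenever $T_t\to T$ strongly with a uniform operator-norm bound and $\tau$ is trace class (proved by finite-rank approximation via Lemma \ref{conroo}) --- both of which you invoke correctly but only implicitly.
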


Now let us consider the operator $B$. The next two lemmas are the analogues of \cite[Lemma 2.2, Lemma 2.3]{D77} and can be proven almost exactly as in \cite{D77}. The only changes required are changes from equalities to inequalities at the appropriate points, hence the proofs are omitted.
\begin{lem}\label{qdsB}
 The formula 
\begin{equation}\label{E-qdsB}
B\rho=\sum_{l=1}^\infty L_lR(1,Y)\pi^{-1}(\rho)(L_lR(1,Y))^*
\end{equation}
with the series converging in the trace norm defines a positive linear map $B:\Dc_s \to \Tsa$ such that 
\begin{equation}\label{E-ABineq}
\Tr(A\rho+B\rho) \leq 0\qquad\text{for all }\rho \in (\Dc_s)_+.
\end{equation}
\end{lem}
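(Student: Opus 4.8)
The plan is to reduce both the convergence statement and the inequality \eqref{E-ABineq} to a single application of the defining inequality \eqref{E-QDS}, after diagonalising $\pi^{-1}(\rho)$. First I would fix $\rho \in (\Dc_s)_+$, put $\sigma := \pi^{-1}(\rho) \in \Tsa$, so that $\rho = R(1,Y)\sigma R(1,Y)^*$, and record the preliminary fact that $\sigma$ is itself positive. This needs a short argument: from $\rho = R(1,Y)\sigma R(1,Y)^* \geq 0$ we get $\inp{\sigma R(1,Y)^* x, R(1,Y)^* x} \geq 0$ for every $x \in \Hb$, and since $R(1,Y)^* = R(1,Y^*)$ has range $D(Y^*)$, which is dense because $Y^*$ generates the adjoint contraction semigroup, positivity of $\sigma$ follows by continuity. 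I would then use the spectral decomposition $\sigma = \sum_j \mu_j \roo{e_j}{e_j}$ with $\mu_j \geq 0$, $(e_j)$ orthonormal and $\sum_j \mu_j = \norm{\sigma}_{tr}$, and set $f_j := R(1,Y)e_j \in D(Y) \subseteq D(L_l)$, so that $\rho = \sum_j \mu_j \roo{f_j}{f_j}$.

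For convergence I would compute each summand's trace explicitly. Each $L_lR(1,Y)\sigma R(1,Y)^*L_l^*$ is positive, so its trace norm equals its trace, and using $R(1,Y)\roo{e_j}{e_j}R(1,Y)^* = \roo{f_j}{f_j}$ together with $L_l\roo{f_j}{f_j}L_l^* = \roo{L_lf_j}{L_lf_j}$ gives $\Tr\bigl(L_lR(1,Y)\sigma R(1,Y)^*L_l^*\bigr) = \sum_j \mu_j \norm{L_lf_j}^2$. Summing over $l$ and exchanging the nonnegative double sum (legitimate by Tonelli), the crucial estimate is obtained from the resolvent identity $Yf_j = YR(1,Y)e_j = f_j - e_j$, which yields $-2\Real\inp{f_j,Yf_j} = -2\norm{f_j}^2 + 2\Real\inp{f_j,e_j} \leq 2\norm{f_j} \leq 2$ since $\norm{R(1,Y)}\leq 1$. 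Combined with \eqref{E-QDS} applied to $f_j$, namely $\sum_l \norm{L_lf_j}^2 \leq -2\Real\inp{f_j,Yf_j}$, this gives $\sum_j \mu_j \sum_l \norm{L_lf_j}^2 \leq 2\norm{\sigma}_{tr} < \infty$. Hence the series defining $B\rho$ converges absolutely in trace norm on $(\Dc_s)_+$, and by linearity (writing $\pi^{-1}(\rho)=\sigma_+-\sigma_-$) on all of $\Dc_s$; each summand is self-adjoint, so $B\rho\in\Tsa$, and positivity of $B$ is immediate since for $\rho\geq 0$ we have $\sigma\geq 0$ and each summand is positive.

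For the inequality \eqref{E-ABineq} I would combine the two trace computations. By Lemma \ref{qdsA} and self-adjointness of $\rho$, $\Tr(A\rho) = \Tr(Y\rho) + \Tr(\rho Y^*) = 2\Real\,\Tr(Y\rho) = 2\Real\sum_j \mu_j \inp{f_j,Yf_j}$, while $\Tr(B\rho) = \sum_j \mu_j \sum_l \norm{L_lf_j}^2$; both series converge absolutely, so they may be added and regrouped by $j$ to give
\[
\Tr(A\rho + B\rho) = \sum_j \mu_j \left( \inp{f_j,Yf_j} + \inp{Yf_j,f_j} + \sum_l \norm{L_lf_j}^2 \right) \leq 0,
\]
where each bracket is $\leq 0$ by \eqref{E-QDS} applied to $f_j \in D(Y)$ and each $\mu_j \geq 0$.

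I expect the main obstacle to be the careful bookkeeping of the trace computations rather than any conceptual difficulty: specifically, justifying the term-by-term identity $\Tr(L_lR(1,Y)\sigma R(1,Y)^*L_l^*) = \sum_j \mu_j \norm{L_lf_j}^2$ for the unbounded $L_l$, the interchange of the double summation over $l$ and $j$ (valid since all terms are nonnegative), and the computation of $\Tr(Y\rho)$ via the trace-class structure supplied by Lemma \ref{qdsA}. The positivity transfer through $\pi^{-1}$ is the one place requiring a genuine density argument, but everything else reduces cleanly to the single inequality \eqref{E-QDS}.
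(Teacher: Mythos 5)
Your proof is correct and is essentially the argument the paper intends: the paper omits this proof, referring to Davies' Lemma 2.2 with equalities relaxed to inequalities, and that argument is exactly your route — positivity of $\pi^{-1}(\rho)$, its spectral decomposition, term-by-term trace computation, and the defining inequality \eqref{E-QDS} applied to $f_j = R(1,Y)e_j$ together with $Yf_j = f_j - e_j$. The only cosmetic point is that you write $R(1,Y)^*L_l^*$ where the lemma has $(L_lR(1,Y))^*$ (the latter is the well-defined object since $L_lR(1,Y)$ is bounded while $L_l^*$ need not exist), but your actual computations use $(L_lR(1,Y))\roo{e_j}{e_j}(L_lR(1,Y))^* = \roo{L_lf_j}{L_lf_j}$ and so are unaffected.
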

\begin{lem}\label{J} For all $\ld>0$, the map $BR(\ld, A)$ from $\Dc_s$ into $\Tsa$ has a unique, positive, bounded linear extension $J_\ld: \Tsa \to \Tsa$ such that $\norm{J_\ld} \leq 1$.
\end{lem}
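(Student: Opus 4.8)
The plan is to establish the bound $\norm{\BR\rho}_{tr}\leq\norm{\rho}_{tr}$ first on the positive cone $(\Dc_s)_+$, then on all of $\Dc_s$, and finally to extend $\BR$ by density to $\Tsa$; uniqueness and the norm bound come for free from density, and positivity requires a separate approximation. Before any of this, one must check that $\BR=B\circ R(\ld,A)$ even makes sense on $\Dc_s$, since $B$ has domain $\Dc_s$: I must verify $R(\ld,A)\Dc_s\subseteq\Dc_s$. Using the explicit formula $R(\ld,A)\rho=\int_0^\infty e^{-\ld t}P(t)\rho P(t)^*\,dt$ together with the fact that $P(t)$ commutes with $R(1,Y)$ (both being functions of the generator $Y$), one computes $R(\ld,A)\pi(\tau)=\pi\bigl(R(\ld,A)\tau\bigr)$ for every $\tau\in\Tsa$. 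Since $\Dc_s=\pi(\Tsa)$, this gives $R(\ld,A)\Dc_s\subseteq\Dc_s$, so $\BR$ is a well-defined linear map $\Dc_s\to\Tsa$.

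The heart of the argument is the norm estimate on $(\Dc_s)_+$. Fix $\rho\in(\Dc_s)_+$ and set $\sigma:=R(\ld,A)\rho$. Because $U(t)\rho=P(t)\rho P(t)^*$ is positive, $R(\ld,A)$ is a positive operator, so $\sigma\in(\Dc_s)_+$; positivity of $B$ (Lemma \ref{qdsB}) then gives $\BR\rho=B\sigma\geq0$, whence $\norm{\BR\rho}_{tr}=\Tr(B\sigma)$. Applying the inequality $\Tr(A\sigma+B\sigma)\leq0$ from Lemma \ref{qdsB} and the identity $A\sigma=AR(\ld,A)\rho=\ld R(\ld,A)\rho-\rho=\ld\sigma-\rho$ yields
\[\Tr(B\sigma)\leq-\Tr(A\sigma)=\Tr(\rho)-\ld\Tr(\sigma)\leq\Tr(\rho),\]
since $\sigma\geq0$ forces $\Tr(\sigma)\geq0$ and $\ld>0$. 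Hence $\norm{\BR\rho}_{tr}\leq\Tr(\rho)=\norm{\rho}_{tr}$ for all $\rho\in(\Dc_s)_+$.

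To pass to all of $\Dc_s$, I would take arbitrary $\rho\in\Dc_s$ and use the decomposition $\rho=\rho_1-\rho_2$ with $\rho_1,\rho_2\in(\Dc_s)_+$ and $\norm{\rho_1}_{tr}+\norm{\rho_2}_{tr}<\norm{\rho}_{tr}+\ep$ from \eqref{E-rho} in Lemma \ref{qdsA}; the triangle inequality and the previous bound give $\norm{\BR\rho}_{tr}<\norm{\rho}_{tr}+\ep$, and letting $\ep\to0$ gives $\norm{\BR\rho}_{tr}\leq\norm{\rho}_{tr}$ on $\Dc_s$. Since $\Dc_s$ is dense in $\Tsa$ (Lemma \ref{qdsA}), the bounded map $\BR|_{\Dc_s}$ extends uniquely to a bounded linear operator $J_\ld:\Tsa\to\Tsa$ with $\norm{J_\ld}\leq1$. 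For positivity of $J_\ld$, I would show $(\Dc_s)_+$ is dense in $\Tsa_+$: for $\rho\in\Tsa_+$ the operators $\rho_n:=\bigl(nR(n,Y)\bigr)\rho\bigl(nR(n,Y)\bigr)^*$ lie in $(\Dc_s)_+$, since $R(n,Y)=R(1,Y)\bigl(I+(1-n)R(n,Y)\bigr)$ factors through $R(1,Y)$, and $\rho_n\to\rho$ in trace norm because $nR(n,Y)\to I$ strongly with $\norm{nR(n,Y)}\leq1$. Continuity of $J_\ld$ and closedness of $\Tsa_+$ then give $J_\ld\rho=\lim_n\BR\rho_n\in\Tsa_+$.

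I expect the main technical point to be the well-definedness step, namely $R(\ld,A)\Dc_s\subseteq\Dc_s$: unlike the contractivity estimate, which is a short consequence of Lemma \ref{qdsB}, it genuinely uses the conjugation structure $U(t)\rho=P(t)\rho P(t)^*$ and the commutation of $P(t)$ with $R(1,Y)$, and without it the composition $\BR$ need not make sense on $\Dc_s$ (the formula for $B$ applies $\pi^{-1}$ to its argument). The density of $(\Dc_s)_+$ in $\Tsa_+$ used for positivity is the other place where the concrete structure of $\Tsa$, rather than abstract semigroup theory, is needed; the remaining steps are routine.
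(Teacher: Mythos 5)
Your proof is correct and follows essentially the same route the paper takes: the contractivity estimate on $(\Dc_s)_+$ via the trace inequality of Lemma \ref{qdsB} and the identity $AR(\ld,A)\rho=\ld R(\ld,A)\rho-\rho$, the passage to all of $\Dc_s$ via the decomposition \eqref{E-rho}, and extension by density is exactly the argument the paper sketches in Remark \ref{BAdd} and attributes to Davies. You additionally supply two details the paper leaves implicit, namely the invariance $R(\ld,A)\Dc_s\subseteq\Dc_s$ via commutation of $P(t)$ with $R(1,Y)$ and the density of $(\Dc_s)_+$ in $\Tsa_+$ for positivity of the extension, both of which are correct.
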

\begin{rem}\label{BAbdd} Since $A$ is resolvent positive, $R(\ld,A)\Dc_s\subset \Dc_s$ and $B$ is positive on $\Dc_s$, it follows from \eqref{E-ABineq} that for all $\rho \in (\Dc_s)_+$, \[\norm{\BR \rho}_{tr} \leq -\Tr(AR(\ld, A)\rho) = \Tr(\rho)-\ld \Tr(R(\ld, A)\rho) \leq \norm{\rho}_{tr}.\] Then \eqref{E-rho} implies that $\norm{\BR \rho}_{tr} \leq \norm{\rho}_{tr}$ for all $\rho \in \Dc_s$, that is, $B$ is $A$-bounded on $\Dc_s$. Davies uses this to prove the existence of $J_\ld$ in Lemma \ref{J}.
\end{rem}

The results above allow us to derive an important corollary. We give the complete proof here as some details were omitted in \cite{D77}.
\begin{cor}\label{Bext} The map $B$ has a positive extension $B':D(A) \to \Tsa$ such that 
\begin{equation}\label{E-ABineqX}
\Tr(A\rho+B' \rho)\leq 0\qquad\text{for all }\rho \in D(A)_+.
\end{equation}
\end{cor}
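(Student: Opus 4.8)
The plan is to define the extension explicitly and then verify its properties one at a time. Fix $\ld>0$ and set
\[
B'\rho := J_\ld(\ld - A)\rho, \qquad \rho \in D(A),
\]
where $J_\ld$ is the bounded operator from Lemma \ref{J}. Since $J_\ld$ is bounded and $(\ld-A)$ maps $(D(A),\|\cdot\|_{gr})$ boundedly into $\Tsa$, the map $B'$ is continuous with respect to the graph norm, and from $(\ld-A)R(\ld,A)=I$ one reads off the key identity $B'R(\ld,A)=J_\ld$. Independence of $\ld$ is routine: using the resolvent identity together with $J_\mu=BR(\mu,A)$ and $R(\mu,A)\Dc_s\subseteq\Dc_s$ on $\Dc_s$, one checks $(J_\ld-J_\mu)\sigma=(\mu-\ld)J_\ld R(\mu,A)\sigma$ on $\Dc_s$ and extends it to $\Tsa$ by density and continuity, which forces $J_\ld(\ld-A)\rho=J_\mu(\mu-A)\rho$. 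To see that $B'$ extends $B$, take $\rho\in\Dc_s$ and set $\rho_n:=nR(n,A)\rho\in\Dc_s$; then $\rho_n\to\rho$ in graph norm, and, choosing $\ld=n$, $(n-A)\rho_n=n\rho$, so $B'\rho_n=J_n(n\rho)=nJ_n\rho=nBR(n,A)\rho=B\rho_n$. Since $B$ is $A$-bounded on $\Dc_s$ by Remark \ref{BAbdd}, we have $B\rho_n\to B\rho$, while graph-continuity of $B'$ gives $B'\rho_n\to B'\rho$; hence $B'\rho=B\rho$.

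For positivity I would exploit the identity $B'R(\ld,A)=J_\ld$, which is positive by Lemma \ref{J}, so $B'$ is positive on $R(\ld,A)\Tsa_+$. For a general $\rho\in D(A)_+$ the Yosida-type approximants $\rho_\ld:=\ld R(\ld,A)\rho=R(\ld,A)(\ld\rho)$ lie in $R(\ld,A)\Tsa_+$ and satisfy $B'\rho_\ld=\ld J_\ld\rho\geq 0$; moreover $\rho_\ld-\rho=R(\ld,A)A\rho$ and $A\rho_\ld-A\rho=\ld R(\ld,A)A\rho-A\rho$ both tend to $0$ as $\ld\to\infty$ (using $\ld R(\ld,A)\to I$ strongly on $\Tsa=\ov{D(A)}$), so $\rho_\ld\to\rho$ in graph norm. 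Since $\Tsa_+$ is closed and $B'$ is graph-continuous, $B'\rho=\lim_{\ld\to\infty}\ld J_\ld\rho\in\Tsa_+$.

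For the inequality \eqref{E-ABineqX} the decisive step is to upgrade the trace estimate behind Remark \ref{BAbdd}, namely $\Tr(BR(\ld,A)\sigma)\leq \Tr(\sigma)-\ld\Tr(R(\ld,A)\sigma)$ for $\sigma\in(\Dc_s)_+$, to the statement $\Tr(J_\ld\tau)\leq \Tr(\tau)-\ld\Tr(R(\ld,A)\tau)$ for \emph{all} $\tau\in\Tsa_+$. Both sides are trace-norm continuous in $\tau$ and $J_\ld$ extends $BR(\ld,A)$, so this follows once one knows that $(\Dc_s)_+=\pi(\Tsa_+)$ is dense in $\Tsa_+$; this density is exactly what Lemma \ref{conroo} is designed to supply. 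Given $\tau=\sum_k\mu_k\roo{e_k}{e_k}\in\Tsa_+$, one truncates to finite rank and then approximates each $\roo{e_k}{e_k}$ by $\roo{R(1,Y)y}{R(1,Y)y}=\pi(\roo{y}{y})\in(\Dc_s)_+$, using that $R(1,Y)$ has dense range $D(Y)$ and invoking Lemma \ref{conroo} for the trace-norm convergence of the rank-one operators. With the estimate available for every $\tau\in\Tsa_+$, applying it to $\tau=\rho\in D(A)_+$ and multiplying by $\ld$ gives $\ld\Tr(J_\ld\rho)\leq -\Tr(\ld R(\ld,A)A\rho)$; letting $\ld\to\infty$ and using $\Tr(B'\rho)=\lim_{\ld\to\infty}\ld\Tr(J_\ld\rho)$ from the previous paragraph together with $\ld R(\ld,A)A\rho\to A\rho$ yields $\Tr(B'\rho)\leq -\Tr(A\rho)$, which is \eqref{E-ABineqX}.

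The routine parts are the definition, the $\ld$-independence, and the graph-norm bookkeeping. The main obstacle I anticipate is the \emph{positivity-preserving} passage to the limit: proving the trace-norm density of $(\Dc_s)_+$ in $\Tsa_+$ (the only place the inequality on the full cone can come from), and arranging all the approximations—$\ld R(\ld,A)\rho$ for positivity on $D(A)_+$ and the rank-one construction for the trace estimate—so that they remain inside the positive cone, where the bounds from Remark \ref{BAbdd} and the positivity of $J_\ld$ and $R(\ld,A)$ are valid.
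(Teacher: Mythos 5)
Your proof is correct, and it starts from the same definition as the paper ($B'\rho=J_\ld(\ld-A)\rho$; the paper fixes $\ld=1$), but the verification steps go through genuinely different approximation devices. For the extension property the paper checks agreement with $B$ on the core $\pi^2(\Tsa)\subset\Dc_s$, exploiting that $(I-A)$ maps $\pi^2(\Tsa)$ back into $\Dc_s$; you instead use the Yosida-type approximants $nR(n,A)\rho$ together with the $\ld$-independence of the definition (which the paper never needs to address, and which you rightly flag and verify). For positivity and the trace inequality the paper uses a single Hilbert-space-level approximation $\rho_\ep=R(1,\ep Y)\rho R(1,\ep Y)^*$, whose virtues are that it maps $D(A)_+$ into $(\Dc_s)_+$ and commutes with $A$ in the sense $A\rho_\ep=(A\rho)_\ep$, so that both positivity and $\Tr(A\rho+B'\rho)\le 0$ drop out of one limit of Lemma \ref{qdsB}; you split the two, getting positivity from $B'R(\ld,A)=J_\ld\ge 0$ plus the graph-norm convergence $\ld R(\ld,A)\rho\to\rho$, and the inequality by first extending the estimate of Remark \ref{BAbdd} from $(\Dc_s)_+$ to all of $\Tsa_+$ via trace-norm density of $(\Dc_s)_+$ in $\Tsa_+$ (your rank-one argument with Lemma \ref{conroo} for this density is sound, since $\Img R(1,Y)=D(Y)$ is dense) and then letting $\ld\to\infty$. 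Your route buys a cleaner, purely resolvent-based argument that avoids the algebraic identity $(1-\ep^{-1})R(1,Y)R(\ep^{-1},Y)+R(1,Y)=R(\ep^{-1},Y)$ and the verification that $\rho_\ep\in\Dc_s$, at the cost of the extra density lemma and the $\ld$-independence check; both arguments lean equally on the $A$-boundedness of $B$ on $\Dc_s$ asserted in Remark \ref{BAbdd}.
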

\begin{proof} We define \[B'\rho = J_1(I-A)\rho.\] We begin by showing that $B'$ is an extension of $B$. Since $B$ is $A$-bounded on $\Dc_s$ (by Remark \ref{BAbdd}), it suffices to show that $B'\rho =B\rho$ for all $\rho$ in a core of $A$ which lies in $\Dc_s$. In particular, we will show that $B'\rho =B\rho$ for all $\rho \in \pi^2(\Tsa) \subset \Dc_s$. To see that $\pi^2(\Tsa)$ is a core for $A$, simply note that density in $\Tsa$ follows because $\pi^2(\Tsa)$ contains the finite rank operators whose eigenvectors lie in $D(Y^2)$ (see also Lemma \ref{r1coreA}) while invariance of $\pi^2(\Tsa)$ under $\sg{U}$ follows directly from the definition of $\pi$. So let $\rho = \pi^2(\sm)$ for some $\sm \in \Tsa$, that is, $\rho \in \pi^2(\Tsa)$. Then by \eqref{E-qdsA}, we have that $A \rho = \pi(Y\pi(\sm)+\pi(\sm)Y^*)\in \Dc_s$ and so $(I-A)\rho \in \Dc_s$. Therefore $B'\rho = J_1(I-A)\rho = BR(1,A)(I-A) \rho = B\rho$ and so $B'$ is an extension of $B$. Moreover, $\norm{B'R(1, A)\rho}_{tr}= \norm{J_1 \rho}_{tr} \leq \norm{\rho}_{tr}$ for all $\rho \in \Tsa$. Therefore $B'$ is $A$-bounded. 

To show that the inequality \eqref{E-ABineqX} holds, let $\rho \in D(A)_+$ and consider 
\[\rho_\ep = R(1, \ep Y)\rho R(1, \ep Y)^*, \qquad \ep>0.\]
It is easy to see that $\rho_\ep$ is self-adjoint if $\rho$ is. Moreover, 
$(1-\ep^{-1})R(1,Y)R(\ep^{-1},Y)+R(1,Y)=R(\ep^{-1},Y)$, so it follows that
\[\rho_\ep = R(1,Y)((1-\ep^{-1}) R(1,\ep Y)+\ep^{-1}I)\rho ((1-\ep^{-1}) R(1,\ep Y)^*+ \ep^{-1}I)R(1,Y)^*.\]
Thus, $\rho_\ep \in (\Dc_s)_+$.
Moreover, the map $\rho \mapsto \rho_\ep$ is bounded independently of $\ep$ as 
\[\norm{\rho_\ep}_{tr} \leq \norm{R(1,\ep Y)}_\infty \norm{\rho}_{tr} \norm{R(1,\ep Y)^*}_\infty \leq \norm{\rho}_{tr}. \]
We will show that $\rho_\ep \to \rho$ as $\ep \to 0$ in trace norm for all $\rho \in \Tsa$. Consider the rank one operator $\rho:=\ruv, u, v \in \Hb$. Elementary calculations show that $\rho_\ep = \roo{u_\ep}{v_\ep}$ where $u_\ep = R(1,\ep Y)u$ and $v_\ep = R(1, \ep Y)v$. By \cite[Lemma II.3.4]{EN}, $u_\ep \to u$ and $v_\ep \to v$ as $\ep \to 0$. Hence by Lemma \ref{conroo}, it follows that $\rho_\ep \to \rho$ in trace norm as $\ep \to 0$. Since the (self-adjoint) finite rank operators are dense in $\Tsa$ and the map $\rho \mapsto \rho_\ep$ is uniformly bounded, it follows that $\rho_\ep \to \rho$ for all $\rho\in \Tsa$. 

Now for $\rho \in D(A)$, we have $A\rho \in \Tsa$ and $A\rho_\ep=(A\rho)_\ep$. Therefore we can conclude that $\rho_\ep \to \rho$ and $A\rho_\ep \to A\rho$ as $\ep \to 0$. Since $B'$ is $A$-bounded, it follows that $B\rho_\ep \to B'\rho$. Therefore by Lemma \ref{qdsB}, for all $\rho \in D(A)_+$, $B'\rho \geq 0$ and
\[\Tr(A\rho+B'\rho) = \lim_{\ep \to 0}\Tr(A\rho_\ep+B\rho_\ep) \leq 0.\]
\end{proof}
Henceforth we will identify $B$ with $B'$ and simply denote it by $B$. With this we have: 
\begin{prop}\label{QDSKato} $A$ and $B$ satisfy the conditions of Theorem \ref{K-Ass} and so there exists a minimal perturbed semigroup $\sg{\St}$ with generator $\Gt$ an extension of $A+B$.
Moreover, $R(\ld, \Gt)$ satisfies \[R(\ld,\Gt)\rho = \sum_{k=0}^\infty R(\ld, A)(\BR)^k\rho \text{ for all }\rho \in \Tsa.\] 
\end{prop}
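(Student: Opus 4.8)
The plan is to verify the three hypotheses of Theorem \ref{K-Ass} for the operators $A$ and $B$ (where $B$ now denotes the extension $B'$ furnished by Corollary \ref{Bext}) on the abstract state space $\Tsa$, and then to read off both the existence of the minimal perturbed semigroup and the resolvent formula directly from the conclusion of Kato's Theorem. The key point to keep in mind throughout is that on $\Tsa$ the functional $\Psi$ is precisely the trace $\Tr$.

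First I would check condition (i), that $A$ generates a substochastic semigroup. We already know that $\sg{U}$, defined by $U(t)\rho = P(t)\rho P(t)^*$, is a $C_0$-semigroup with generator $A$. Positivity is immediate, since conjugation by $P(t)$ maps $\Tsa_+$ into itself. For contractivity on the positive cone I would take $\rho \in \Tsa_+$ and use cyclicity of the trace together with the contractivity of $\sg{P}$ (so that $P(t)^*P(t) \leq \One$):
\[
\norm{U(t)\rho}_{tr} = \Tr(P(t)\rho P(t)^*) = \Tr(\rho\, P(t)^*P(t)) \leq \Tr(\rho) = \norm{\rho}_{tr}.
\]
Hence $\sg{U}$ is substochastic and condition (i) holds. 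Conditions (ii) and (iii) then require no new work, as they are exactly the content of Corollary \ref{Bext}: that corollary produces the positive extension $B = B'$ on $D(A)$, giving $B\rho \geq 0$ for $\rho \in D(A)_+$, which is condition (ii); and the inequality \eqref{E-ABineqX}, namely $\Tr(A\rho + B\rho) \leq 0$ for all $\rho \in D(A)_+$, is condition (iii) once $\Psi$ is identified with $\Tr$.

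With all three hypotheses verified, Theorem \ref{K-Ass} applies directly to give an extension $\Gt$ of $A+B$ generating the minimal substochastic $C_0$-semigroup $\sg{\St}$ on $\Tsa$, together with the resolvent representation. For $\ld > 0$ and $\rho \in \Tsa$ the formula from Theorem \ref{K-Ass} reads
\[
R(\ld, \Gt)\rho = R(\ld, A)\sum_{k=0}^\infty (BR(\ld, A))^k \rho = \sum_{k=0}^\infty R(\ld, A)(\BR)^k \rho,
\]
where the second equality just moves the bounded operator $R(\ld, A)$ inside the norm-convergent sum and rewrites $BR(\ld, A)$ as $\BR$. This is the asserted identity.

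I expect no genuine obstacle here: the substantive analytic work (constructing $B'$, establishing positivity and the trace inequality) has already been carried out in Lemmas \ref{qdsA}--\ref{J} and Corollary \ref{Bext}, so this proposition is essentially a synthesis that packages those results into the hypotheses of Kato's Theorem. The only step involving a direct computation is the substochasticity of $\sg{U}$ in condition (i), and even that reduces to the trace-cyclicity identity displayed above combined with $P(t)^*P(t) \leq \One$.
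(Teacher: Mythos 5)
Your proposal is correct and matches the paper's (implicit) argument: the paper gives no separate proof of this proposition, treating it exactly as you do — as a synthesis in which condition (i) is the cited fact that $U(t)\rho = P(t)\rho P(t)^*$ defines a substochastic $C_0$-semigroup with generator $A$, and conditions (ii)--(iii) are precisely Corollary \ref{Bext}, after which the resolvent formula is read off from Theorem \ref{K-Ass}. Your direct trace-cyclicity computation for the substochasticity of $\sg{U}$ is a harmless (and correct) substitute for the paper's citation.
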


We have just described two methods of constructing a minimal quantum dynamical semigroup with $Y, (L_l)$ satisfying Premise \ref{QDShyp}; one via Fagnola's method (Proposition \ref{QDSGen}) and the other via Kato's Theorem (Proposition \ref{QDSKato}). The remainder of this section will be devoted to showing that the two semigroups coincide. 

Note that the semigroup from Kato's Theorem acts in the space $\Tsa$ while Fagnola's semigroup acts in the space $\loph$. In order to show that the semigroups coincide, we will first transfer the semigroups to the same space, namely $\Tf$. Recalling that $\Tf$ is simply the complexification of $\Tsa$, we will henceforth work with the complexifications of the operators $A, B, \sg{U}, \sg{\St}, \Gt$ but retain the same notation. We can do so because we saw in Section \ref{SS-K-ASS} that honesty in the complexified space is equivalent to honesty in the real space. To transfer Fagnola's semigroup to $\Tf$ on the other hand, we will utilise the fact that every quantum dynamical semigroup on $\loph$ induces a predual semigroup on $\Tf$. We will denote by $\sg{S}$ the predual semigroup of Fagnola's minimal quantum dynamical semigroup $\sg{T}$ identified in Proposition \ref{QDSGen}. We now show that $\sg{S}$ coincides with $\sg{\tilde{S}}$.
\begin{thm}\label{QDS-K} Let $\sg{S}$ be the predual semigroup of the minimal quantum dynamical semigroup $\sg{T}$ in Proposition \ref{QDSGen} and $\sg{\tilde{S}}$ be the perturbed semigroup in Proposition \ref{QDSKato}. Then $\St(t)\rho = S(t)\rho$ for all $\rho \in \Tf$, $t \geq 0$. 
\end{thm}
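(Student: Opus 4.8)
The plan is to establish $\St(t)\rho = S(t)\rho$ by a two-sided sandwich, exploiting that \emph{both} semigroups are singled out by a minimality property: $\sg{S}$ is the predual of Fagnola's minimal quantum dynamical semigroup $\sg{T}$ (Proposition \ref{QDSGen}), while $\sg{\St}$ is the minimal substochastic semigroup whose generator extends $A+B$ (Theorem \ref{K-Ass}). Since $\Tf$ is the complexification of $\Tsa$ and the order on $\Tsa$ is dual to the cone $\loph_+$, it suffices to prove the two inequalities $S(t)\rho \leq \St(t)\rho$ and $\St(t)\rho \leq S(t)\rho$ for all $\rho \in \Tf_+$ and $t \geq 0$; equality on the generating cone then extends to all of $\Tf$ by linearity. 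Throughout I use the order-duality $\sigma \leq \tau$ in $\Tsa$ if and only if $\inp{\sigma, x} \leq \inp{\tau, x}$ for all $x \in \loph_+$.

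For the first inequality I would show that the adjoint semigroup $(\St(t)^*)_{t \geq 0}$ on $\loph$ is a contractive quantum dynamical semigroup solving \eqref{E-QDSGenT}, and then invoke the minimality of $\sg{T}$. Since $\sg{\St}$ is substochastic, its adjoint is positive, $\sigma$-weakly continuous and satisfies $\St(t)^*\One \leq \One$; the one substantial axiom to verify is complete positivity, which I would obtain from the Dyson--Phillips representation of $\St(t)$, whose terms are built by composing and integrating the completely positive maps $U(t)\rho = P(t)\rho P(t)^*$ and $B\rho = \sum_l L_l \rho L_l^*$, using that complete positivity survives the trace-norm limits and integrals in the series. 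For the generator, observe that for $u,v \in D(Y)$ the rank-one operator $\ruv$ lies in $\Dc_s \subseteq D(A) \subseteq D(\Gt)$ with $\Gt\ruv = (A+B)\ruv = \roo{Yu}{v} + \roo{u}{Yv} + \sum_l \roo{L_l u}{L_l v}$, the series converging in trace norm by \eqref{E-LL}; the adjoint relation $\Tr(\ruv\, \Gt^* x) = \Tr((\Gt\ruv)\,x)$ for $x \in D(\Gt^*)$ then yields $\inp{v,(\Gt^* x)u} = \Ups(x)[v,u]$, i.e.\ \eqref{E-LinbGen}. By Proposition \ref{GenForm} the semigroup $(\St(t)^*)$ solves \eqref{E-QDSGenT}, so minimality of $\sg{T}$ gives $T(t)x \leq \St(t)^* x$ for $x \in \loph_+$, and dualising yields $S(t)\rho \leq \St(t)\rho$ on $\Tf_+$.

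For the reverse inequality I would check that $\sg{S}$ is a substochastic semigroup on $\Tsa$ whose generator $\G$ extends $A+B$, and then apply the minimality built into Theorem \ref{K-Ass}. Substochasticity is immediate from complete positivity of $\sg{T}$ together with $T(t)\One \leq \One$. To identify the generator, reading \eqref{E-QDSGenT} against $\ruv$ gives $S(t)\ruv = \ruv + \int_0^t S(s)(A+B)\ruv\,ds$ for $u,v \in D(Y)$, whence $\ruv \in D(\G)$ with $\G\ruv = (A+B)\ruv$; since the rank-one operators form a core for $A$, $B$ is $A$-bounded (Remark \ref{BAbdd}, Corollary \ref{Bext}) and $\G$ is closed, we conclude $\G \supseteq A+B$. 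Theorem \ref{K-Ass} then yields $\St(t)\rho \leq S(t)\rho$ on $\Tf_+$. Combining the two inequalities gives $\St(t)\rho = S(t)\rho$ on $\Tf_+$, and hence on all of $\Tf$.

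The main obstacle I anticipate is not the sandwich but the verification that $(\St(t)^*)$ genuinely belongs to the class to which Fagnola's minimality applies, namely establishing complete positivity of $\St(t)$ rigorously from the Dyson--Phillips series, since this is the one property of a quantum dynamical semigroup (Definition \ref{QDSdef}) not handed to us by the abstract Kato construction. Secondary technical points are the passage between weak and norm derivatives in the generator identifications, controlling the infinite sum $\sum_l \roo{L_l u}{L_l v}$ uniformly via \eqref{E-LL}, and confirming that $\ruv \in \Dc_s$ for all $u,v \in D(Y)$ so that the Lindblad identity holds on the full form domain $D(Y) \times D(Y)$; these are routine once the convergence estimate \eqref{E-LL} is in hand. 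Once complete positivity is secured, the two minimality statements close the argument.
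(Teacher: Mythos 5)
Your proof is essentially correct, but it follows a genuinely different route from the one the paper actually writes out. The paper's proof is a direct resolvent computation: Fagnola's formula \eqref{ResPQ} gives $R(\ld,G^*)x=\sum_{k}Q_\ld^k(P_\ld(x))$, Lemma \ref{PQBR} identifies $P_\ld=R(\ld,A)^*$ and $Q_\ld=(\BR)^*$, so $R(\ld,G)$ coincides with the Kato resolvent $\sum_k R(\ld,A)(\BR)^k$ of Proposition \ref{QDSKato}, and the Post--Widder inversion formula finishes the job. Your two-sided minimality sandwich is precisely the alternative the paper flags in the remark immediately after its proof: Corollary \ref{KatoLind} supplies the Lindblad representation of $\tilde{G}^*$ for one inequality, and Lemma \ref{QDSGAB} (proved in the next section) supplies $\G\supseteq A+B$ for the other. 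What the paper's route buys is that it never has to confront complete positivity of $\St(t)$: the resolvent identity is an equality of bounded operators and needs no order-theoretic input beyond what Kato's Theorem already provides. Your route is conceptually more transparent (both semigroups are ``the minimal one'' in their respective classes), but it hinges on exactly the point you flag as the main obstacle --- verifying that $(\St(t)^*)$ is a \emph{quantum dynamical} semigroup, i.e.\ completely positive, so that Fagnola's minimality in Proposition \ref{QDSGen} is applicable; your Dyson--Phillips sketch is the right idea but is the one step that is not fully closed, and note that $B$ acts as $\rho\mapsto\sum_l L_l\rho L_l^*$ only after passing through $\pi^{-1}$ on $\Dc$, so the complete positivity of the series terms still needs a line of justification. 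Two small corrections: $\ruv$ is generally not self-adjoint, so it lies in $\Dc=\Dc_s+i\Dc_s$ (equivalently $\V\subset\Dc$, Lemma \ref{r1coreA}) rather than in $\Dc_s$; and in the second inequality you should say explicitly that the weak identity $S(t)\ruv=\ruv+\int_0^t S(s)(A+B)\ruv\,ds$ upgrades to a norm identity because $s\mapsto S(s)(A+B)\ruv$ is norm continuous, after which closedness of $\G$ and the fact that $\V$ is a core for $A+B$ give $\G\supseteq A+B$. With the complete positivity step supplied, your argument is a valid proof.
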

In order to prove the theorem, we require some auxiliary information. First let us consider a few important subspaces, beginning with
\[\V=\V_1:=\Sp\{\ruv\,:\, u,v \in D(Y)\}.\] We will also occasionally require the spaces 
\[\V_n:=\Sp\{\ruv\,:\, u,v \in D(Y^n)\}, n \in \N, n \geq 2.\]
Moreover, the map $\pi$ can be extended to $\Tf$ and we will be interested in the spaces 
\[\Dc := \pi(\Tf)= \Dc_s+i\Dc_s,\quad\pi^n(\Tf), n \in \N, n \geq 2\]
\begin{lem}\label{r1coreA} For all $n \in \N$, $\V_n\subset \Dc$ and moreover, $\V_n$ is a core for $A$.
\end{lem}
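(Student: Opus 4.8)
The plan is to prove two things for each $n \in \N$: first that $\V_n \subset \Dc$, and second that $\V_n$ is a core for $A$. Recall that $\Dc = \pi(\Tf)$ where $\pi(\rho) = R(1,Y)\rho R(1,Y)^*$, and that $A$ is the generator of the semigroup $U(t)\rho = P(t)\rho P(t)^*$ on $\Tf$ (the complexification). The inclusion part is the easier of the two and should be tackled first.

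For the inclusion $\V_n \subset \Dc$, it suffices by linearity to show that each generating rank-one operator $\ruv$ with $u,v \in D(Y^n)$ lies in $\Dc$. Since $D(Y^n) \subseteq D(Y)$ for all $n \geq 1$, it is enough to handle $u, v \in D(Y)$. The natural move is to write $\ruv = \pi(\sm)$ for a suitable $\sm \in \Tf$; explicitly, since $R(1,Y):\Hb \to D(Y)$ is a bijection, one sets $u' = (I - Y)u = R(1,Y)^{-1}u$ and $v' = R(1,Y)^{-1}v$, both well-defined elements of $\Hb$ because $u,v \in D(Y)$. Then a direct computation using $\pi(\roo{u'}{v'}) = R(1,Y)\roo{u'}{v'}R(1,Y)^* = \roo{R(1,Y)u'}{R(1,Y)v'} = \ruv$ shows $\ruv \in \pi(\Tf) = \Dc$. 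This handles $\V_1$, and hence every $\V_n$, since $\V_n \subseteq \V_1$.

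For the core property, the strategy is to invoke a standard core criterion: a subspace $\Dc' \subseteq D(A)$ is a core for $A$ if it is dense in $D(A)$ in the graph norm, or equivalently (the version easiest to use here) if $\Dc'$ is dense in $\Tf$ and invariant under the semigroup $\sg{U}$. I would verify both conditions for $\V_n$. Density of $\V_n$ in $\Tf$ follows because it contains finite-rank operators built from vectors in $D(Y^n)$, and $D(Y^n)$ is dense in $\Hb$ (the generator $Y$ of a $C_0$-semigroup has $\bigcap_k D(Y^k)$ dense, so $D(Y^n)$ is certainly dense), so finite linear combinations of such rank-one operators are dense in $\Tf$; here Lemma \ref{conroo} is the key tool for approximating arbitrary rank-one operators in trace norm by ones with vectors in $D(Y^n)$. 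For invariance, I would check that $U(t)\ruv = P(t)\ruv P(t)^* = \roo{P(t)u}{P(t)v}$, and that $P(t)$ maps $D(Y^n)$ into itself—which holds because $P(t)$ commutes with $Y$ on $D(Y)$ and, by iteration, preserves each $D(Y^k)$; hence $P(t)u, P(t)v \in D(Y^n)$ and $U(t)\ruv \in \V_n$.

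The main obstacle I anticipate is the invariance verification for $n \geq 2$, specifically confirming that $P(t)$ preserves $D(Y^n)$ rather than merely $D(Y)$. This requires the standard but slightly delicate fact that for a $C_0$-semigroup, $P(t)D(Y^k) \subseteq D(Y^k)$ with $Y^k P(t) x = P(t) Y^k x$ for $x \in D(Y^k)$; this follows by induction using $P(t)D(Y) \subseteq D(Y)$ and the commutation $YP(t)x = P(t)Yx$. One must also be slightly careful, when verifying that $\V_n \subseteq D(A)$ in the first place, that the rank-one operators with vectors in $D(Y)$ actually lie in $D(A)$ and satisfy $A\ruv = \roo{Yu}{v} + \roo{u}{Yv}$ in the sense described in Lemma \ref{qdsA}; this is consistent with the identification $\V_n \subset \Dc$ already established, since $\Dc = \Dc_s + i\Dc_s$ is contained in $D(A)$ and is itself a core for $A$ by Lemma \ref{qdsA}. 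Indeed, once $\V_n \subset \Dc$ is known together with density and $U$-invariance of $\V_n$, the core property transfers cleanly, and this is the cleanest route to the conclusion.
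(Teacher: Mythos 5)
Your proposal is correct and follows essentially the same route as the paper: the inclusion is obtained by exhibiting $\ruv$ as $\pi(\roo{(I-Y)u}{(I-Y)v})$, and the core property is obtained from the density-plus-invariance criterion, using Lemma \ref{conroo} together with the density of $D(Y^n)$ for the density step and $P(t)D(Y^n)\subseteq D(Y^n)$ for the invariance step. The extra care you take in justifying that $P(t)$ preserves $D(Y^n)$ for $n\geq 2$ is exactly the point the paper compresses into ``a similar argument,'' so nothing is missing.
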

\begin{proof} Since $\V_{n+1} \subseteq \V_{n}$, it suffices to show that $\V \subseteq \Dc$ to prove the first statement. Fix $u, v \in D(Y)$. Then  $\ruv = R(1, Y)\rho R(1, Y)^*$ where $\rho$ is the rank-one operator defined by $\rho :=\roo{(I-Y)u}{(I-Y)v}$. Therefore $\ruv \in \Dc$ and so $\V \subset \Dc$.

Next, we show that $\V$ is dense in $\Tf$. Since $D(Y)$ is dense in $\Hb$, it follows from Lemma \ref{conroo} that $\V$ is dense in the space of finite rank operators. Since the finite rank operators are dense in $\Tf$, it follows that $\V$ is dense in $\Tf$.

Finally, observe that $U(t)(\ruv) = P(t)\ruv P(t)^* = \roo{P(t)u}{ P(t)v}$ for all $t \geq 0$. Since $P(t)u \in D(Y)$ for all $u \in D(Y)$ and all $t \geq 0$, we have that $U(t)(\ruv) \in \V$. Therefore $\V$ is invariant under $\sg{U}$ and so $\V$ is a core for $A$.

A similar argument shows that $\V_n$ is a core for $A$ for all $n \geq 2$ since $D(Y^n)$ is a core for $Y$ and $D(Y^n)$ is invariant under the semigroup $\sg{P}$.  
\end{proof}
\begin{rem}\label{PiCore} Recall that in the proof of Corollary \ref{Bext} we showed that $\pi^2(\Tsa)$ is a core for $A|_{\Tsa}$. We can in fact show more generally that $\pi^n(\Tf)$, $n \in \N$ are cores for $A$. A similar proof as that of Lemma \ref{r1coreA} shows in fact that $\V_n \subseteq \pi^n(\Tf)$ for all $n \geq 2$. Furthermore, it is easy to see that $\pi^n(U(t)\sm) = U(t)(\pi^n\sm)$ for all $\sm \in \Tf$ and all $t \geq 0$. Therefore $\pi^n(\Tf)$, $n \in \N$ are also cores for $A$.  
\end{rem}

It will also be useful to know how the operators $A, B$ act on the operators in $\V$.
\begin{lem}\label{ABr1} For all $\ruv \in \V$ and $x \in \loph$
\[\Tr(x(A\ruv))=\inp{Yv,xu}+ \inp{v,xYu}\quad\text{and}\quad \Tr(x (B\ruv)) =\sum_{l=1}^\infty \inp{L_lv,xL_l u}.\] In particular, \[\Tr(x\left((A+B)\ruv\right)) = \Ups(x)[v,u].\] 
\end{lem}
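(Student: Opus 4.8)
The plan is to reduce everything to elementary algebra of rank-one operators, using the explicit descriptions of $A$ and $B$ already available to us. By Lemma \ref{r1coreA}, every $\ruv \in \V$ (with $u,v \in D(Y)$) lies in $\Dc$ and hence in $D(A)$, so both $A\ruv$ and $B\ruv$ are given by the formulas \eqref{E-qdsA} and \eqref{E-qdsB}. Throughout I would use two routine identities for rank-one operators, valid for bounded $T$ and vectors $a,b$: the multiplication rules $T\roo{a}{b} = \roo{Ta}{b}$ and $\roo{a}{b}T = \roo{a}{T^*b}$, together with the trace pairing $\Tr(x\roo{a}{b}) = \inp{b, xa}$ for $x \in \loph$. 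These reduce every trace below to an inner product.

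For the first identity, I would apply \eqref{E-qdsA}. Since $u,v \in D(Y)$, acting on the left by $Y$ gives $Y\ruv = \roo{Yu}{v}$, while the right action is $\ruv\, Y^* = \roo{u}{Yv}$ (this is exactly the ``restriction of $(Y\ruv)^*$'' appearing in Lemma \ref{qdsA}, both being genuine trace-class rank-one operators). Hence $A\ruv = \roo{Yu}{v} + \roo{u}{Yv}$, and pairing against $x$ with the trace identity yields $\Tr(x(A\ruv)) = \inp{v, xYu} + \inp{Yv, xu}$, as claimed.

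The main work is the $B$ identity, and this is where I expect the bookkeeping to be delicate. The first step is to identify $\pi^{-1}(\ruv)$: the computation in the proof of Lemma \ref{r1coreA} shows $\ruv = R(1,Y)\roo{(I-Y)u}{(I-Y)v}R(1,Y)^*$, so $\pi^{-1}(\ruv) = \roo{(I-Y)u}{(I-Y)v}$. Substituting into \eqref{E-qdsB} and simplifying each summand with the multiplication rules and the relation $R(1,Y)(I-Y) = I$ on $D(Y)$, the resolvents cancel, leaving $L_l R(1,Y)\roo{(I-Y)u}{(I-Y)v}(L_l R(1,Y))^* = \roo{L_l u}{L_l v}$. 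Thus $B\ruv = \sum_{l=1}^\infty \roo{L_l u}{L_l v}$. Taking the trace against $x$ and interchanging it with the sum---justified because the series converges in trace norm by Lemma \ref{qdsB}, equivalently \eqref{E-LL} gives absolute convergence of $\sum_l \inp{L_l v, xL_l u}$---gives $\Tr(x(B\ruv)) = \sum_{l=1}^\infty \inp{L_l v, xL_l u}$.

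Adding the two identities and comparing with the definition \eqref{E-QDSGen} of $\Ups$ gives $\Tr(x((A+B)\ruv)) = \Ups(x)[v,u]$. The only genuine obstacle is the $B$ computation: one must correctly track the left and right rank-one multiplications through $\pi^{-1}$ and the $L_l R(1,Y)$ factors, and confirm the trace--sum interchange; the $A$ part and the final addition are then immediate.
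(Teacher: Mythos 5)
Your proposal is correct and follows essentially the same route as the paper: both identify $\pi^{-1}(\ruv)=\roo{(I-Y)u}{(I-Y)v}$, cancel the resolvents against $I-Y$ to get $A\ruv=\roo{Yu}{v}+\roo{u}{Yv}$ and $B\ruv=\sum_{l}\roo{L_lu}{L_lv}$, and then pair with $x$ under the trace, using trace-norm convergence of the series. The only cosmetic difference is that the paper verifies the $B$ computation by evaluating $(B\ruv)\varphi$ on a vector $\varphi$ rather than invoking the rank-one multiplication rules abstractly.
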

\begin{proof} 
Fix $\ruv \in \V$ and $x \in \loph$. Note first that elementary calculations show that $Y\ruv +\ruv Y^* = \roo{Yu}{v}+\roo{u}{Yv}$. Since $\V \subset \Dc$ by Lemma \ref{r1coreA}, and hence \eqref{E-qdsA} holds for $\rho=\ruv$ (Lemma \ref{qdsA}), we have
\[\Tr(x(A\ruv))=\Tr(x(\roo{Yu}{v}+\roo{u}{Yv}))=\inp{v, xYu}+\inp{Yv,xu}.\]
On the other hand, by Lemma \ref{qdsB}, for $\rho \in \mathcal{D}$, \eqref{E-qdsB} holds and from the proof of Lemma \ref{r1coreA}, $\pi^{-1}(\ruv) = \roo{(I-Y)u)}{(I-Y)v}$. Hence for $\varphi \in
\Hb$,
\begin{align*}
(B \ruv) \varphi &= \sum_{l=1}^\infty L_lR(1,Y)\roo{(I-Y)u)}{(I-Y)v}(L_lR(1,Y))^*\varphi\\
&= \sum_{l=1}^\infty L_lR(1,Y)\inp{(I-Y)v,(L_lR(1,Y))^*\varphi}(I-Y)u\\
&= \sum_{l=1}^\infty \inp{L_l v,\varphi}L_lu = \sum_{l=1}^\infty \roo{L_lu}{L_lv}\varphi.
\end{align*}
Therefore, $\Tr(x(B\ruv))= \sum_{l=1}^\infty \inp{L_lv,xL_l u}$. The final assertion follows directly from the definition of $\Ups(x)[v,u]$. 
\end{proof}
\begin{cor}\label{KatoLind} The generator $\tilde{G}^*$ of the adjoint semigroup $\sg{\tilde{S}^*}$ of $\sg{\tilde{S}}$ can be represented in Lindblad form. 
\end{cor}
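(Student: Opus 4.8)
The plan is to show that the generator $\tilde{G}^*$ of the adjoint semigroup satisfies the defining relation in Definition \ref{DefLind}, namely that $\inp{v,(\tilde{G}^*x)u} = \Ups(x)[v,u]$ for all $x \in D(\tilde{G}^*)$ and all $u,v \in D(Y)$. The key ingredient is Lemma \ref{ABr1}, which tells us precisely that $\Tr(x((A+B)\ruv)) = \Ups(x)[v,u]$ for all rank-one operators $\ruv \in \V$. Since $\tilde{G}$ is an extension of $A+B$ (Proposition \ref{QDSKato}), and $\V \subset D(A) = D(A+B)$ by Lemma \ref{r1coreA}, we have $\tilde{G}(\ruv) = (A+B)(\ruv)$ for all $\ruv \in \V$. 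The strategy is therefore to transfer this action on the predual side to a statement about the adjoint generator $\tilde{G}^*$ acting on $\loph$.

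First I would recall the duality relation between the generator of a semigroup and the generator of its adjoint semigroup: for $\rho \in D(\tilde{G})$ and $x \in D(\tilde{G}^*)$, we have $\inp{\tilde{G}\rho, x}_{\Tf,\loph} = \inp{\rho, \tilde{G}^* x}_{\Tf,\loph}$, i.e. $\Tr((\tilde{G}\rho)x) = \Tr(\rho(\tilde{G}^*x))$. Applying this with $\rho = \ruv \in \V \subseteq D(\tilde{G})$ gives, for any $x \in D(\tilde{G}^*)$,
\begin{equation*}
\inp{v,(\tilde{G}^* x)u} = \Tr((\ruv)(\tilde{G}^*x)) = \Tr((\tilde{G}(\ruv))x) = \Tr(((A+B)(\ruv))x) = \Ups(x)[v,u],
\end{equation*}
where the second equality uses the symmetry of the trace, the third uses that $\tilde{G}$ extends $A+B$ together with $\ruv \in \V \subset D(A+B)$, and the last is exactly Lemma \ref{ABr1}. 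Since this holds for all $u,v \in D(Y)$ and all $x \in D(\tilde{G}^*)$, the operators $Y,(L_l)$ from Premise \ref{QDShyp} witness that $\tilde{G}^*$ is in Lindblad form.

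The main subtlety to verify carefully is the identification $\Tr((\ruv)(\tilde{G}^*x)) = \inp{v,(\tilde{G}^*x)u}$, which follows from the elementary identity $\Tr(z\,\ruv) = \inp{v,zu}$ already used in the proof of Proposition \ref{GenForm}; one should also confirm that the pairing in the duality $\inp{\cdot,\cdot}_{\Tf,\loph}$ is oriented so that the adjoint relation reads as stated. Beyond this bookkeeping the argument is immediate, so I do not expect a genuine obstacle: the real content was already established in Lemma \ref{ABr1}, and this corollary simply repackages that computation through the duality between $\sg{\tilde{S}}$ and its adjoint $\sg{\tilde{S}^*}$. The only point requiring mild care is ensuring $\ruv$ genuinely lies in $D(\tilde{G})$, which is guaranteed since $\V \subset D(A) \subseteq D(\tilde{G})$.
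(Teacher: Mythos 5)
Your argument is correct and is essentially identical to the paper's own proof: both take $\ruv \in \V \subseteq D(A) \subseteq D(\tilde{G})$, pass through the duality $\inp{v,(\tilde{G}^*x)u} = \Tr(x(\tilde{G}\ruv)) = \Tr(x((A+B)\ruv))$, and conclude with Lemma \ref{ABr1}. The extra care you take over the orientation of the pairing and the membership $\ruv \in D(\tilde{G})$ is exactly the bookkeeping the paper leaves implicit.
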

\begin{proof} Let $u,v \in D(Y)$ and $x \in D(\tilde{G}^*)$. Then by Lemma \ref{r1coreA}, $\ruv \in \V \subseteq D(A)$, so by Lemma \ref{ABr1}, it follows that $\inp{v,(\tilde{G}^*x)u} = \Tr((\tilde{G}^*x) \ruv)=\Tr(x(\tilde{G}\ruv))=\Tr(x((A+B)\ruv))=\Ups(x)[v,u]$.
\end{proof}

We also require some information about Fagnola's construction, $\sg{T}$ with generator $G^*$. In particular, Fagnola shows in \cite[Proposition 3.25]{F99} that the resolvent of $G^*$ is given by 
\begin{equation}\label{ResPQ}
R(\ld, G^*)x = \sum_{k=0}^\infty Q_\ld^k(P_\ld(x)), \quad x \in \loph, \ld>0
\end{equation} 
with the series convergent in the strong operator topology, where $P_\ld$ and $Q_\ld$, $\ld>0$ are linear positive maps in $\loph$ defined by 
\begin{align}
\inp{v, P_\ld(x)u} &= \int_0^\infty e^{-\ld s} \inp{P(s)v,xP(s)u}\,ds\label{E-Pld}\\
\inp{v, Q_\ld(x)u} &= \sum_{l=1}^\infty \int_0^\infty e^{-\ld s} \inp{L_lP(s)v,xL_lP(s)u}\,ds \label{E-Qld}
\end{align}
for $x \in \loph$, $u, v \in D(Y)$. We will rephrase $P_\ld$ and $Q_\ld$ in terms of $A$ and $B$.
\begin{lem}\label{PQBR}
 Suppose $P_\ld, Q_\ld$ are as defined in \eqref{E-Pld} and \eqref{E-Qld} and $A, B$ are as in Proposition \ref{QDSKato}. Then $P_\ld=
R(\ld,A)^* = R(\ld, A^*)$ and $Q_\ld = (BR(\ld, A))^*$ for all $\ld>0$.
\end{lem}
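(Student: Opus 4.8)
The plan is to prove the two identities separately, establishing $P_\ld = R(\ld, A^*) = R(\ld, A)^*$ first and then deducing $Q_\ld = (BR(\ld, A))^*$. For the first identity, I would work through the duality between $\Tf$ and $\loph$. Recall that $\sg{U}$ is the $C_0$-semigroup on $\Tsa$ (extended to $\Tf$) defined by $U(t)\rho = P(t)\rho P(t)^*$ with generator $A$. Its adjoint semigroup $\sg{U^*}$ acts on $\loph$ and satisfies $\inp{\rho, U(t)^* x} = \inp{U(t)\rho, x}$, which on rank-one operators gives $\inp{v, U(t)^* x\, u} = \Tr(U(t)(\ruv)\, x) = \Tr(\roo{P(t)u}{P(t)v}\, x) = \inp{P(t)v, x P(t)u}$. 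Since $A$ generates $\sg{U}$, the adjoint operator $A^*$ generates $\sg{U^*}$ and the resolvent is the Laplace transform, so $\inp{v, R(\ld, A^*)x\, u} = \int_0^\infty e^{-\ld s}\inp{v, U(s)^* x\, u}\,ds = \int_0^\infty e^{-\ld s}\inp{P(s)v, x P(s)u}\,ds$, which is exactly $\inp{v, P_\ld(x)u}$ by \eqref{E-Pld}. Because $\V$ is dense in $\Tf$ (Lemma \ref{r1coreA}), matching on rank-one operators $\ruv$ with $u,v \in D(Y)$ suffices to conclude $P_\ld = R(\ld, A^*)$; the equality $R(\ld, A^*) = R(\ld, A)^*$ is the standard fact that the resolvent of the adjoint is the adjoint of the resolvent.

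For the second identity, I would compute $\inp{v, Q_\ld(x)u}$ by substituting the just-proved formula for $P_\ld$ and using the action of $B$ from Lemma \ref{ABr1}. Observe that the summand $\int_0^\infty e^{-\ld s}\inp{L_l P(s)v, x L_l P(s)u}\,ds$ should be recognised as arising from applying $B$ to $U(s)(\ruv) = \roo{P(s)u}{P(s)v}$ and integrating. Concretely, by Lemma \ref{ABr1}, $\Tr(x (B\, \roo{P(s)u}{P(s)v})) = \sum_{l=1}^\infty \inp{L_l P(s)v, x L_l P(s)u}$, so summing \eqref{E-Qld} over $l$ and interchanging sum and integral gives $\inp{v, Q_\ld(x)u} = \int_0^\infty e^{-\ld s}\,\Tr\!\big(x\, B\,U(s)(\ruv)\big)\,ds = \Tr\!\big(x\, B R(\ld, A)(\ruv)\big) = \inp{\ruv, (BR(\ld, A))^* x}$. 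This identifies $Q_\ld = (BR(\ld, A))^*$ on $\V$, and density of $\V$ together with boundedness of both $Q_\ld$ and $(BR(\ld, A))^*$ extends the equality to all of $\loph$ in the appropriate weak sense.

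The main obstacle I anticipate is justifying the interchange of the infinite sum over $l$, the time integral, and the application of $B$ in the second part. The series $\sum_l \roo{L_l u}{L_l v}$ converges in trace norm by \eqref{E-LL}, but here it is entangled with the semigroup $\sg{P}$ and the Laplace transform, so I would need a uniform bound — furnished by the estimate \eqref{E-LL} applied to $P(s)u, P(s)v$ together with the contractivity $\norm{P(s)} \leq 1$ and the fact that $B$ is $A$-bounded (Remark \ref{BAbdd}) — to invoke Fubini/Tonelli and the dominated convergence theorem. A secondary subtlety is ensuring that $BR(\ld, A)$ genuinely maps into $\Tf$ with the integral representation $BR(\ld, A)\rho = \int_0^\infty e^{-\ld s} B U(s)\rho\,ds$ valid on $\V$; this follows since $\V$ is a core for $A$, $R(\ld, A)\rho = \int_0^\infty e^{-\ld s}U(s)\rho\,ds$, and $B$ is bounded relative to $A$, but the bookkeeping connecting the weak (pairing) formulation to the trace-norm convergence is where the care is needed.
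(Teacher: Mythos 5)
Your proposal is correct and follows essentially the same route as the paper: both verify the identities by pairing with rank-one operators $\ruv$, $u,v\in D(Y)$, using the integral representation $R(\ld,A)\ruv=\int_0^\infty e^{-\ld s}\roo{P(s)u}{P(s)v}\,ds$ (convergent in the graph norm of $A$), the action of $B$ on rank-one operators from Lemma \ref{ABr1}, and the relative boundedness of $B$ to justify the interchanges, before concluding by density. The only cosmetic difference is that you phrase the first identity through the adjoint semigroup and its Laplace transform, whereas the paper computes $\Tr\bigl(x\,(R(\ld,A)\ruv)\bigr)$ directly; the underlying calculation is identical.
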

\begin{proof}
 Fix $\ld>0$. Since $R(\ld, A)\rho = \int_0^\infty e^{-\ld s}P(s)\rho P(s)^*\, ds$
for all $\rho \in \Tf$, it follows by elementary calculations that for $\ruv \in \V$, $R(\ld, A)\roo{u}{v} =\int_0^\infty e^{-\ld s}\roo{P(s)u}{P(s)v}\, ds$ 
where the integral is absolutely convergent in $\Tf$ and also in the graph norm of $A$.
Hence, for $u,v \in D(Y), x \in \loph$,
\begin{align*}
\inp{v, (R(\ld, A)^*x)u} &= \Tr(x(R(\ld,A) \ruv))\\
&= \Tr\left(x \int_0^\infty e^{-\ld s}\roo{P(s)u}{P(s)v}\, ds\right)\\
&= \int_0^\infty e^{-\ld s}\inp{P(s)v, xP(s)u} ds =\inp{v, P_\ld(x) u}.
\end{align*}
Since $P_\ld(x) \in \loph$ for all $x \in \loph$, it follows that $P_\ld =R(\ld, A)^*$. Similarly,
\begin{align*}
\inp{v, ((\BR)^*x)u}=&\Tr(x (BR(\ld,A) \ruv))\\
=& \Tr\left(x \left(B\int_0^\infty e^{-\ld s}\roo{P(s)u}{P(s)v}\, ds\right)\right)\\
=& \int_0^\infty e^{-\ld s}\Tr(x (B\roo{P(s)u}{P(s)v}))\,ds\quad\text{(as $\roo{P(s)u}{P(s)v} \in D(B)$)}\\
=&\sum_{l=1}^\infty \int_0^\infty e^{-\ld s} \inp{L_lP(s)v,xL_lP(s)u}\,ds=\inp{v,Q_\ld(x)u }.
\end{align*}
Therefore, $Q_\ld = (BR(\ld, A))^*$.
\end{proof}

Now we can show that the two semigroups are equal.
\begin{proof}[Proof of Theorem \ref{QDS-K}] 
From \eqref{ResPQ} and Lemma \ref{PQBR}, we have that \[R(\ld, G)^*x =  R(\ld, G^*)x
= \sum_{k=0}^\infty Q^k_\ld( P_\ld(x)) = \sum_{k=0}^\infty (\BR)^{*k}R(\ld, A)^*x\] for all $x \in \loph$ with the series convergent in the strong operator
topology. Since by Proposition \ref{QDSKato}, we know that $\sum_{k=0}^\infty R(\ld, A)(\BR)^{k}\rho$ converges in trace norm for all $\rho \in \Tf$ and the trace functional is continuous on $\Tf$, it follows that \[R(\ld, G)\rho = \sum_{k=0}^\infty R(\ld, A)(\BR)^{k}\rho = R(\ld, \Gt)\rho, \quad\rho \in \Tf.\] Hence, by the Post-Widder Inversion Formula, $S(t)\rho = \St(t)\rho$ for all $\rho \in \Tf, t \geq 0$.
\end{proof}
In the remainder of this paper, unless stated otherwise, $\sg{T}$ will denote the minimal quantum dynamical semigroup with generator $G^*$ identified in Proposition \ref{QDSGen} with associated form $\Ups$ satisfying Premise \ref{QDShyp} and $\sg{S}$ will always denote its predual semigroup with generator $G$.
\begin{rem} One can also prove Theorem \ref{QDS-K} by using the minimality of the semigroups, that is, Fagnola's semigroup $\sg{T}$ is the minimal semigroup whose generator can be represented in Lindblad form (Proposition \ref{QDSGen}) while the Kato semigroup $\sg{\tilde{S}}$ is the minimal semigroup whose generator is an extension of $A+B$ (Theorem \ref{K-Ass}). Corollary \ref{KatoLind} tells us that the adjoint semigroup of $\sg{\tilde{S}}$ can be represented in Lindblad form. To complete the proof, we only need to show that any quantum dynamical semigroup satisfying \eqref{E-QDSGenT} has predual semigroup whose generator is an extension of $A+B$. We will in fact, prove this in the next section (Lemma \ref{QDSGAB}).       
\end{rem}


\section{Applications of Honesty Theory in Quantum Dynamical Semigroups}\label{S-HTQDS}
In the previous section, we saw that the generator of a certain quantum dynamical semigroup in Lindblad form can be viewed as the adjoint of a perturbed generator of a substochastic semigroup in the setting of Kato's Theorem. A natural question to investigate next is the application of honesty theory to these semigroups. It turns out that if equality holds in \eqref{E-QDS}, then honesty of the predual semigroup is equivalent to a notion known as conservativity of the quantum dynamical semigroup (Proposition \ref{HonCons}). 

Conservativity of the quantum dynamical semigroup has long been studied (see \cite{C99, FC, F99} for example). The main reason for the interest in conservativity is that it is related to the non-explosion of the system \cite{FC}, \cite[Section 3.6]{F99}. However, the study of conservativity is also of interest because conservative quantum dynamical semigroups turn out to be the semigroups with ``nice" properties. For example, if the minimal semigroup is conservative, then it is the unique semigroup satisfying \eqref{E-QDSGenT} \cite[Corollary 3.23]{F99}. Moreover, one can give a precise description of the domain of the generator if the minimal semigroup is conservative \cite[Proposition 3.33]{F99}, \cite[Theorem 4.1]{GQ}. This is important as we saw in the previous section that the domain of the generator of the quantum dynamical semigroup is difficult to determine precisely and is often defined in terms of a form.
 
We begin by giving the definition of conservativity.
\begin{defn} A quantum dynamical semigroup $\sg{T}$ is called conservative if $T(t)\One = \One$ for every $t \geq 0$.
\end{defn}
A necessary condition for $\sg{T}$ to be conservative is for 
\[\Ups(\One)[v,u] = \frac{d}{dt}\inp{v, (T(t)\One) u}|_{t=0} = 0\]  
for all $u, v \in D(Y)$. So when we speak of conservativity, we will only consider the case when we have equality in \eqref{E-QDS}. 
\begin{prop}\label{HonCons}
Suppose $\sg{T}$ is the minimal quantum dynamical semigroup identified in Proposition \ref{QDSGen} with $\Ups$ satisfying Premise \ref{QDShyp} with equality in \eqref{E-QDS}. Then $\sg{T}$ is conservative if and only if its predual semigroup $\sg{S}$ is honest.
\end{prop}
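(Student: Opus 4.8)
The plan is to connect conservativity of $\sg{T}$ directly to the honesty characterisations already established, using the fact that conservativity is a statement about the \emph{adjoint} semigroup $\sg{T}$ while honesty is a statement about the predual semigroup $\sg{S}$. The key observation is that $\One \in \loph$ is precisely the unit for which $T(t)\One = \One$ expresses conservativity, and under the duality $\inp{\rho, \One}_{\Tf, \loph} = \Tr(\rho) = \inp{\Psi, \rho}$, the constant functional $\One$ corresponds to the norm functional $\Psi$ on the predual. Thus I would first translate the conservativity condition $T(t)\One = \One$ into a statement about $\Psi$ and the predual semigroup. Since equality holds in \eqref{E-QDS}, Remark \ref{HonStoc} applies: honesty is simply stochasticity, so $\ab = 0$ and honesty of $\sg{S}$ is equivalent to $\norm{S(t)\rho} = \norm{\rho}$ for all $\rho \in \Tsa_+$, i.e.\ $\inp{\Psi, S(t)\rho} = \inp{\Psi, \rho}$.

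The central step is then to write the conservativity condition in dual form. For $\rho \in \Tf_+$ we have
\begin{equation*}
\inp{\rho, T(t)\One}_{\Tf, \loph} = \inp{S(t)\rho, \One}_{\Tf, \loph} = \Tr(S(t)\rho) = \inp{\Psi, S(t)\rho},
\end{equation*}
while $\inp{\rho, \One}_{\Tf, \loph} = \Tr(\rho) = \norm{\rho}$ for $\rho \in \Tf_+$. Hence $T(t)\One = \One$ holds if and only if $\inp{\Psi, S(t)\rho} = \norm{\rho}$ for all $\rho \in \Tf_+$ (and, by complexification, for all $\rho$). Combining this with the stochasticity reformulation of honesty gives the equivalence directly. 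Here I would lean on Remark \ref{HonStoc} to identify honesty with stochasticity in the equality case, and on the established identification (Theorem \ref{QDS-K}) that $\sg{S}$ really is the predual of Fagnola's semigroup $\sg{T}$, so that the duality pairing is the correct one and $G^*$ is genuinely the generator of $\sg{T}$.

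The main obstacle I anticipate is handling the unboundedness carefully: $\One$ lies in $\loph$ and the semigroup $\sg{T}$ acts boundedly on $\loph$, so $T(t)\One = \One$ is a clean statement, but I must ensure the duality pairing $\inp{S(t)\rho, \One} = \inp{\rho, T(t)\One}$ holds for all $\rho$ and is preserved in the limit, which requires the $\sm$-weak continuity from Definition \ref{QDSdef} and the fact that $\sg{S}$ is the genuine predual semigroup. A subtle point is that honesty as defined in Definition \ref{Def-HAss} is a statement about the generator via the functionals $a_0$ and $\ab$; reducing it to the norm-preservation condition $\norm{S(t)\rho} = \norm{\rho}$ requires invoking Remark \ref{HonStoc} (equality in condition (iii) forces $\ab = 0$) together with the semigroup identity \eqref{E-SgGen}, which expresses the norm difference through $a_0$. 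Once this is in place, the equivalence is essentially a transcription between the two pictures, so I expect the argument to be short provided the duality bookkeeping is done cleanly.
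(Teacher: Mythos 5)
Your proposal is correct and follows essentially the same route as the paper: reduce honesty to stochasticity (trace preservation) via Remark \ref{HonStoc}, then identify $T(t)\One=\One$ with trace preservation of $\sg{S}$ through the duality $\Tr(S(t)\rho)=\Tr((S(t)\rho)\One)=\Tr(\rho(T(t)\One))$. The concerns you raise about unboundedness are not needed, since $\One\in\loph$ and $\sg{S}$ is by construction the predual semigroup, so the pairing identity is immediate.
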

\begin{proof}
Since $\Ups$ satisfies \eqref{E-QDS} with equality, the predual semigroup being honest is equivalent to it being stochastic (Remark \ref{HonStoc}). In this context, this means that $S(t)$ is trace-preserving for all $t\geq 0$, i.e. $\Tr(S(t)\rho) = \Tr(\rho)$ for all $\rho \in \Tsa$ and this is equivalent to $\sg{T}$ being conservative since
$\Tr(S(t)\rho) = \Tr((S(t)\rho)\One)=\Tr(\rho( T(t)\One))$ for all $\rho \in \Tf, t\geq 0$.
\end{proof}
The equivalence between honesty and conservativity allows us to derive some conditions for conservativity from honesty theory by combining Theorems \ref{HT-R}, \ref{HT-ME}, \ref{HT-dual}, Proposition \ref{H-dense}, Lemma \ref{PQBR} and Proposition \ref{HonCons}. 
\begin{prop}\label{QDScons}
Suppose $\sg{T}$ is the minimal quantum dynamical semigroup identified in Proposition \ref{QDSGen} with $\Ups$ satisfying Premise \ref{QDShyp} with equality in \eqref{E-QDS}. Let $\ld>0$ and $Q_\ld$ as defined in \eqref{E-Qld}. The following are equivalent:
\begin{enumerate} 
\item The semigroup $\sg{T}$ is conservative.
\item The sequence of operators $\{Q_\ld^n(\One)\}_{n \geq 0}$ converges $\sm$-weakly to 0.
\item If for some $x \in \loph$, we have $Q_\ld x = x$, then $x =0$.
\item If for some $x \in \loph$, we have $Q_\ld x \geq x$, then $x =0$.
\item The operator $Q_{\ld *}$ is mean ergodic, where $Q_{\ld *}$ denotes the predual operator of $Q_\ld$.
\item $\lim_{n \to \infty}\norm{Q_{\ld *}^n \rho}_{tr} =0$ for all $\rho \in \Tsa_+$.
\end{enumerate}
\end{prop}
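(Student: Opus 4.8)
The plan is to funnel every one of the six statements into a single hub---honesty of the predual semigroup $\sg{S}$---and then read off the equivalences from Section \ref{S-NewHT} together with the identifications supplied by Lemma \ref{PQBR}. By Proposition \ref{HonCons}, condition (i) is equivalent to honesty of $\sg{S}$, and by Lemma \ref{PQBR} we have $Q_\ld = (\BR)^*$, so the predual operator appearing in (v)--(vi) is $Q_{\ld *} = \BR$ acting on $\Tsa$ (equivalently $\Tf$). Since $\Psi = \One$ under the trace duality and the $\sm$-weak topology on $\loph$ is precisely its weak$^*$ topology as the dual of $\Tf$, each of (ii)--(vi) becomes a statement about $\BR$ or its adjoint $(\BR)^*$ to which the machinery of Section \ref{S-NewHT} applies. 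Because the honesty criteria there hold for some/all $\ld>0$ and honesty is $\ld$-independent, fixing a single $\ld>0$ is harmless.

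First I would dispose of the conditions that are verbatim translations. Condition (v) asserts that $Q_{\ld *} = \BR$ is mean ergodic, equivalent to honesty by Theorem \ref{HT-ME}; condition (vi) asserts $\lim_{n} \norm{(\BR)^n \rho}_{tr} = 0$ for all $\rho \in \Tsa_+$, which is Theorem \ref{HT-R}(iii); condition (iii) asserts $\ker(I - (\BR)^*) = \{0\}$, which is Proposition \ref{H-dense}(ii); and condition (iv), read as a positive subeigenvector relation $(\BR)^* x \geq x$ with $0 \leq x \in \loph$, states that $(\BR)^*$ has no nonzero positive subeigenvector, which by the equivalence of (i) and (iv) in Theorem \ref{HT-dual} is exactly honesty of $\sg{S}$. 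Each of these is therefore equivalent to (i).

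The one condition requiring a short extra argument is (ii). Here I would show directly that the $\sm$-weak limit $g_\ld := \wslim_{n} Q_\ld^n \One$ exists and vanishes precisely when $\sg{S}$ is honest. For $u \in \Tsa_+$ one computes $\inp{Q_\ld^n \One, u} = \inp{(\BR)^{*n}\Psi, u} = \inp{\Psi, (\BR)^n u} = \norm{(\BR)^n u}_{tr}$, since $(\BR)^n u \geq 0$; this sequence is monotone decreasing (as $\BR$ is substochastic) and bounded below by $0$, hence convergent, and since $\Tsa_+$ generates the predual and $\norm{Q_\ld^n \One} \leq 1$, the pointwise limit defines a bounded functional $g_\ld \in \loph$ with $Q_\ld^n \One \to g_\ld$ in the $\sm$-weak topology. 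By Theorem \ref{HT-R}(iii), honesty is equivalent to $\norm{(\BR)^n u}_{tr} \to 0$ for all $u \in \Tsa_+$, i.e.\ to $\inp{g_\ld, u} = 0$ for all $u \in \Tsa_+$, i.e.\ to $g_\ld = 0$. This yields (i) $\Leftrightarrow$ (ii) and completes the list.

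The main obstacle is precisely this last step: unlike the other five, (ii) is not an off-the-shelf restatement of a honesty criterion but singles out the particular element $\One = \Psi$, so I must verify that the $\sm$-weak limit exists for this specific sequence---using monotonicity and the substochasticity of $\BR$ rather than any compactness---and then tie its vanishing to Theorem \ref{HT-R}(iii). A secondary point to handle with care is the positivity convention in (iv): the subeigenvector relation must be read on the positive cone to match Theorem \ref{HT-dual}(iv), since allowing arbitrary self-adjoint $x$ would admit spurious solutions such as $x = -\One$ (for which $Q_\ld(-\One) = -Q_\ld\One \geq -\One$ always holds), which would make the literal statement fail independently of honesty.
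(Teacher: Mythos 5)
Your proposal is correct and follows exactly the route the paper intends: the paper gives no written proof of Proposition \ref{QDScons}, merely stating that it follows by combining Theorems \ref{HT-R}, \ref{HT-ME}, \ref{HT-dual}, Proposition \ref{H-dense}, Lemma \ref{PQBR} and Proposition \ref{HonCons}, which is precisely the dictionary you set up via $Q_\ld=(\BR)^*$, $Q_{\ld*}=\BR$ and $\Psi=\One$. Your extra care on (ii) (establishing existence of the $\sm$-weak limit via monotonicity and substochasticity before identifying its vanishing with Theorem \ref{HT-R}(iii)) and your observation that (iv) must be read on the positive cone to match Theorem \ref{HT-dual}(iv) are both sound and supply details the paper leaves implicit.
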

Some of the conditions in Proposition \ref{QDScons} are known. For example, conditions (ii) and (iii) in Proposition \ref{QDScons} are similar to the conditions given in \cite[Theorem 3.2]{F99}. It should be noted however, that \cite[Theorem 3.2]{F99} was proven without applying honesty theory.

We now generalise the notion of conservativity to the class of minimal quantum dynamical semigroups constructed in Proposition \ref{QDSGen} by transfering the concept of honesty from Kato's Theorem to these quantum dynamical semigroups:
\begin{defn}
Let $\sg{T}$ be the minimal quantum dynamical semigroup identified in Proposition \ref{QDSGen} with $\Ups$ satisfying Premise \ref{QDShyp}. The semigroup $\sg{T}$ is said to be honest if and only if its predual semigroup
$\sg{S}$ is honest in the sense of Definition \ref{Def-HAss}.
\end{defn}
We will show that honesty is the natural analogue of conservativity in the strictly substochastic case. As we mentioned above, conservativity is important because it allows us to characterise uniqueness of the semigroup and also the domain of its generator. It turns out that honesty also allows us to do the same for the substochastic case as we will show in Corollary \ref{HonDom} and Proposition \ref{UniqQDSHon}.

As in the conservative case, we are also interested in characterising the honesty  of the quantum dynamical semigroups. Since Proposition \ref{QDScons} was derived from characterisations of honesty in Theorems \ref{HT-R}, \ref{HT-ME}, \ref{HT-dual}, it follows that conditions (ii) to (v) in Proposition \ref{QDScons} also characterise the honesty of the minimal semigroup $\sg{T}$. These conditions can also be used to show that other previously known characterisations of conservativity (which were proven without using honesty theory methods) also characterise honesty. 
For example, we can prove a version of \cite[Proposition 3.3.1]{F99} for honesty by applying condition (iii) of Proposition \ref{QDScons}. We simply require the following lemma whose proof we omit as it can be proven almost exactly as in \cite[Proposition 3.30]{F99}.
\begin{lem}\label{kerUps} Fix $\ld>0$. Then for all $x \in \loph$, we have $\Ups(x) = \ld x$ if and only if
$Q_\ld(x) = x$.
\end{lem}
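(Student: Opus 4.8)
The plan is to read the relation $\Ups(x) = \ld x$ as the sesquilinear-form identity $\Ups(x)[v,u] = \ld\inp{v,xu}$ for all $u,v \in D(Y)$, and then to prove both equivalences by routing each condition through the single intermediate statement $x \in \ker(\ld - (A+B)^*)$. First I would invoke Lemma \ref{ABr1}, which for every $\ruv \in \V$ yields $\Tr(x((A+B)\ruv)) = \Ups(x)[v,u]$; combined with the elementary identity $\Tr(x\ruv) = \inp{v,xu}$, the form identity $\Ups(x)[v,u] = \ld\inp{v,xu}$ is seen to be exactly $\Tr(x((A+B)\ruv)) = \ld\Tr(x\ruv)$ for all $u,v \in D(Y)$, i.e.\ for all rank-one generators of $\V$.

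Next I would upgrade this identity from $\V$ to the full domain $D(A+B) = D(A)$. By Lemma \ref{r1coreA}, $\V$ is a core for $A$, so any $\rho \in D(A)$ is a graph-norm limit of a sequence $\rho_n \in \V$ with $\rho_n \to \rho$ and $A\rho_n \to A\rho$ in trace norm; since $B$ is $A$-bounded (Remark \ref{BAbdd}) this also gives $(A+B)\rho_n \to (A+B)\rho$, and the continuity of the pairing $w \mapsto \Tr(xw)$ in trace norm (with $x\in\loph$ fixed) lets me pass to the limit in both sides of $\Tr(x((A+B)\rho_n)) = \ld\Tr(x\rho_n)$. The result is $\Tr(x((A+B)\rho)) = \ld\Tr(x\rho)$ for all $\rho \in D(A)$, which is precisely the assertion that $x \in D((A+B)^*)$ with $(A+B)^* x = \ld x$, that is, $x \in \ker(\ld - (A+B)^*)$. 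Conversely, given $x \in \ker(\ld - (A+B)^*)$ one simply restricts the identity $\Tr(x((A+B)\rho)) = \ld\Tr(x\rho)$ to $\rho = \ruv \in \V \subseteq D(A)$ and reapplies Lemma \ref{ABr1} to recover the form identity; hence $\Ups(x) = \ld x$ if and only if $x \in \ker(\ld - (A+B)^*)$.

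Finally I would identify this kernel with the fixed points of $Q_\ld$. Lemma \ref{BddImg} gives $\ker(\ld - (A+B)^*) = \ker(I - (\BR)^*)$, while Lemma \ref{PQBR} identifies $(\BR)^* = Q_\ld$, so $x \in \ker(\ld - (A+B)^*)$ if and only if $(I-Q_\ld)x = 0$, i.e.\ $Q_\ld(x) = x$. Chaining the two equivalences completes the proof. The only genuinely non-formal point is the core/closure step upgrading the identity from $\V$ to all of $D(A)$, for which the $A$-boundedness of $B$ is essential; this is the analogue of the density arguments underlying Fagnola's proof of \cite[Proposition 3.30]{F99}, and everything else is a direct translation through the lemmas already established.
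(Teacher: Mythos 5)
Your proof is correct, and it takes a genuinely different route from the one the paper points to. The paper does not prove Lemma \ref{kerUps} itself; it defers to \cite[Proposition 3.30]{F99}, where Fagnola argues directly at the Hilbert-space level: assuming the form identity $\Ups(x)[v,u]=\ld\inp{v,xu}$, one differentiates $s\mapsto e^{-\ld s}\inp{P(s)v,xP(s)u}$, uses the identity with $u,v$ replaced by $P(s)u,P(s)v$ to reduce the derivative to $-e^{-\ld s}\sum_l\inp{L_lP(s)v,xL_lP(s)u}$, and integrates over $[0,\infty)$ to read off $Q_\ld(x)=x$ from \eqref{E-Qld} (and conversely). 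You instead factor both conditions through the single statement $x\in\ker(\ld-(A+B)^*)$: Lemma \ref{ABr1} plus the core property of $\V$ and the $A$-boundedness of $B$ identify $\Ups(x)=\ld x$ with $(A+B)^*x=\ld x$ (this is in effect the special case $W(x)=\ld x$ of Lemma \ref{ABUps}, re-derived inline so there is no circularity with the order of the lemmas), and then Lemmas \ref{BddImg} and \ref{PQBR} convert that kernel condition into $Q_\ld(x)=x$. Each step you invoke is available and used correctly; the only delicate point, extending the trace identity from $\V$ to $D(A)$, is handled properly via the graph-norm density and the trace-norm continuity of $w\mapsto\Tr(xw)$. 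What your route buys is that the lemma becomes a formal corollary of the structural identifications already established in Sections \ref{S-KQDS}--\ref{S-HTQDS}, with no new computation with the semigroup $\sg{P}$; what Fagnola's route buys is independence from the Kato machinery, which is why the paper can cite it as an external black box.
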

\begin{cor}\label{HT-form} Let $\sg{T}$ be the minimal quantum dynamical semigroup identified in Proposition \ref{QDSGen} with $\Ups$ satisfying Premise \ref{QDShyp}. The semigroup $\sg{T}$ is honest if and only if $\ker(\ld-\Ups) = \{0\}$ for some/all $\ld>0$.
\end{cor}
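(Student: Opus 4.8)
The plan is to reduce Corollary \ref{HT-form} to condition (iii) of Proposition \ref{QDScons} via the bridge provided by Lemma \ref{kerUps}. The key observation is that honesty of $\sg{T}$ is, by definition, honesty of its predual semigroup $\sg{S}$, and Proposition \ref{QDScons} already lists a family of equivalent conditions characterising this. In particular, condition (iii) states that honesty is equivalent to the assertion that $Q_\ld x = x$ forces $x = 0$, i.e.\ that $\ker(I - Q_\ld) = \{0\}$ for some/all $\ld > 0$. So the entire content of the corollary is the translation of the fixed-point equation $Q_\ld x = x$ into the form equation $\Ups(x) = \ld x$, which is exactly what Lemma \ref{kerUps} supplies.

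First I would observe that Proposition \ref{QDScons} was stated under the hypothesis of equality in \eqref{E-QDS}, whereas Corollary \ref{HT-form} is stated for general $\Ups$ satisfying Premise \ref{QDShyp} (with inequality permitted). So the first step is to check that the relevant equivalence, namely (i) $\Leftrightarrow$ (iii) of Proposition \ref{QDScons}, actually holds in the general substochastic setting. The honesty characterisation underlying condition (iii) traces back through Lemma \ref{PQBR} (which identifies $Q_\ld = (\BR)^*$) and Proposition \ref{H-dense}(ii) (honesty iff $\ker(I - (\BR)^*) = \{0\}$); none of these used equality in \eqref{E-QDS}, so the equivalence ``$\sg{T}$ honest $\Leftrightarrow \ker(I - Q_\ld) = \{0\}$'' survives in the general case. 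I would state this cleanly: by the definition of honesty for $\sg{T}$, by Lemma \ref{PQBR}, and by Proposition \ref{H-dense}(ii), the semigroup $\sg{T}$ is honest if and only if $\ker(I - Q_\ld) = \{0\}$ for some/all $\ld > 0$.

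Then I would apply Lemma \ref{kerUps}, which asserts that for fixed $\ld > 0$ and any $x \in \loph$, we have $\Ups(x) = \ld x$ if and only if $Q_\ld(x) = x$. This immediately identifies the two kernels: $\ker(\ld - \Ups) = \ker(I - Q_\ld)$. Combining this with the previous step gives that $\sg{T}$ is honest if and only if $\ker(\ld - \Ups) = \{0\}$, which is the desired statement. The ``some/all'' qualifier is inherited directly from the corresponding qualifier in Proposition \ref{H-dense} together with the $\ld$-independence already built into Lemma \ref{kerUps}.

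I do not expect a serious obstacle here, since the corollary is essentially a dictionary translation: Lemma \ref{kerUps} does the analytic work of relating the resolvent-level fixed-point equation to the form-level eigenvalue equation, and Proposition \ref{QDScons} (or rather its underlying derivation) does the work of relating honesty to the fixed-point equation. The only point requiring care is the one flagged above, namely that condition (iii) of Proposition \ref{QDScons} was formally stated under equality in \eqref{E-QDS}, so I must verify that the piece I invoke does not actually depend on that hypothesis. This amounts to re-reading the chain Lemma \ref{PQBR} $\to$ Proposition \ref{H-dense} and confirming the inequality in \eqref{E-QDS} plays no role there, which it does not. With that verified, the proof is a two-line concatenation of the honesty characterisation and Lemma \ref{kerUps}.
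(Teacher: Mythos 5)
Your proposal is correct and follows essentially the same route as the paper: the corollary is obtained by combining the honesty characterisation $\ker(I-Q_\ld)=\{0\}$ (condition (iii) of Proposition \ref{QDScons}, which the paper explicitly notes extends to the general substochastic case since it was derived from Theorem \ref{HT-R}, Lemma \ref{PQBR} and Proposition \ref{H-dense} without using equality in \eqref{E-QDS}) with Lemma \ref{kerUps} identifying $\ker(\ld-\Ups)$ with $\ker(I-Q_\ld)$. Your explicit check that the equality hypothesis plays no role is exactly the point the paper handles in the sentence preceding the corollary.
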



Now let us look at the domains of generators of these minimal semigroups. We will see that honesty theory allows us to give two different descriptions of the domain of the generator of an honest semigroup, one in terms of cores (Proposition \ref{Vhon}) and the other, a description of the actual domain (Proposition \ref{HonDomF}). The description in terms of cores given below is an extension of \cite[Proposition 3.32]{F99} which states that $\V$ is a core for $G$ if and only if $\sg{T}$ is conservative.
\begin{prop}\label{Vhon} The minimal quantum dynamical semigroup $\sg{T}$ is honest if and only if any of the spaces $\V_n$ or $\pi^n(\Tf), n \in \N$ is a core for $G$, the generator of the predual semigroup $\sg{S}$. 
\end{prop}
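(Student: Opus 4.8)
The plan is to reduce everything to the density criterion for cores. Recall that since $G$ generates a $C_0$-semigroup, every $\ld>0$ lies in $\rho(G)$, so a subspace $\Dc_0\subseteq D(G)$ is a core for $G$ if and only if $(\ld-G)\Dc_0$ is dense in $\Tf$ for some (equivalently every) $\ld>0$; this is because $\ld-G$ is a homeomorphism of $(D(G),\|\cdot\|_G)$ onto $\Tf$, and the same statement holds verbatim for $A$. First I would record that each $\V_n$ and each $\pi^n(\Tf)$ lies in $D(A)$ and is a core for $A$, by Lemma \ref{r1coreA} and Remark \ref{PiCore}; applying the criterion to $A$ then gives that $(\ld-A)\V_n$ and $(\ld-A)\pi^n(\Tf)$ are dense in $\Tf$.

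The key step is to rewrite $\ld-G$ on these spaces via the resolvent of $A$. Since $G\supseteq A+B$ by Kato's Theorem (Theorem \ref{K-Ass}), and every $\rho$ in $\V_n$ or $\pi^n(\Tf)$ lies in $D(A)=D(A+B)$, one has $(\ld-G)\rho=(\ld-(A+B))\rho=(\ld-A)\rho-B\rho=(I-\BR)(\ld-A)\rho$, using $B\rho=\BR(\ld-A)\rho$. Hence $(\ld-G)\V_n=(I-\BR)\bigl[(\ld-A)\V_n\bigr]$ is the image of a dense subspace under the bounded operator $I-\BR$, and likewise for $\pi^n(\Tf)$.

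I would then invoke the elementary fact that for a bounded operator $T$ and a dense subspace $D$ one has $\overline{T(D)}=\overline{\Img T}$. With $T=I-\BR$ and $D=(\ld-A)\V_n$ this yields $\overline{(\ld-G)\V_n}=\overline{\Img(I-\BR)}$, and identically for $\pi^n(\Tf)$. Therefore $(\ld-G)\V_n$ (resp.\ $(\ld-G)\pi^n(\Tf)$) is dense in $\Tf$ if and only if $\Img(I-\BR)$ is dense. By Proposition \ref{H-dense}(i) the latter is precisely the honesty of the predual semigroup $\sg{S}$, which by definition is the honesty of $\sg{T}$. Chaining these equivalences shows that $\V_n$ (or $\pi^n(\Tf)$) is a core for $G$ if and only if $\sg{T}$ is honest, and since $n\in\N$ played no role, the statement follows for all $n$ at once.

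The argument is essentially self-contained once the core/density dictionary is in place, so I do not expect a genuine obstacle. The one point requiring care is the identity $(\ld-G)\rho=(I-\BR)(\ld-A)\rho$: it relies on knowing that $\V_n,\pi^n(\Tf)\subseteq D(A)$ so that $G$ acts as $A+B$ there, and on the boundedness of $\BR$ (established before Theorem \ref{HT-ME}) needed to transfer density through $I-\BR$. The mildest subtlety is to justify the equivalence ``$\Dc_0$ is a core for $A$ iff $(\ld-A)\Dc_0$ is dense,'' which follows from $\ld-A$ being a graph-norm isomorphism of $D(A)$ onto $\Tf$.
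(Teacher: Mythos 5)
Your argument is correct, but it follows a genuinely different route from the paper's. The paper's proof runs through the closure characterisation of honesty: by Theorem \ref{HT-R}, honesty is equivalent to $G=\ov{A+B}$; since $B$ is $A$-bounded, any subspace of $D(A)$ that is a core for $A$ is a core for $\ov{A+B}$, and $\V_n$ and $\pi^n(\Tf)$ are cores for $A$ by Lemma \ref{r1coreA} and Remark \ref{PiCore} (the converse direction is immediate, since a core $\V_n\subseteq D(A)$ for $G$ forces $G=\ov{G|_{\V_n}}=\ov{(A+B)|_{\V_n}}\subseteq\ov{A+B}$). You instead go through the range-density characterisation of Proposition \ref{H-dense}(i): you translate ``core for $G$'' into density of $(\ld-G)\V_n$, factor $(\ld-G)\rho=(I-\BR)(\ld-A)\rho$ on $D(A)$, and use the density of $(\ld-A)\V_n$ --- again ultimately the fact that $\V_n$ is a core for $A$ --- to identify $\ov{(\ld-G)\V_n}$ with $\ov{\Img(I-\BR)}$. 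Both proofs rest on the same two inputs, namely that $\V_n$ and $\pi^n(\Tf)$ are cores for $A$ and an equivalent formulation of honesty, but they enter through different doors: your factorisation makes explicit why the characterisations in Theorem \ref{HT-R}(iv) and Proposition \ref{H-dense}(i) yield the same core statement, at the cost of a longer chain of reductions, while the paper's relative-boundedness argument is more economical but conceals the mechanism inside the graph-norm comparison. The points you flag as needing care (boundedness of $\BR$, the core/density dictionary, $\V_n\subseteq D(A)$) are all supplied by Lemma \ref{J}, Lemma \ref{r1coreA} and Remark \ref{PiCore}, so there is no gap.
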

\begin{proof} Theorem \ref{HT-R} tells us that $\sg{T}$ is honest if and only if $G = \ov{A+B}$. Since $B$ is $A$-bounded, a subspace $\mathcal{S} \subset D(A)$ is a core for $\ov{A+B}$ if and only if $\mathcal{S}$ is a core for $A$. The result follows from Lemma \ref{r1coreA} and Remark \ref{PiCore} which tell us that for each $n \in \N$, $\V_n$ and $\pi^n(\Tf)$ are cores for $A$.
\end{proof}

Next, we will give a precise description of the domain of the generator of an honest semigroup. But first, we need some auxiliary information. Recall from \eqref{E-QDSGen} that $\Ups(x)$ is a sequilinear form on $D(Y) \x D(Y)$ for all $x \in \loph$. So if $\Ups(x)$ is closed, we can associate an operator $W(x)$ with the form $\Ups(x)$ in the sense: \begin{align}\label{FormOp}
D(W(x)) &=\{ u \in D(\Ups(x)):\exists v \in \Hb \text{ such that }\Ups(x)(u, \phi) = \inp{v, \phi} \text{ for all }\phi \in D(\Ups(x)) \},\notag\\
W(x)u &= v.
\end{align} Define 
\begin{align*}\mathcal{F}:=\{x \in \loph:\,& \text{there exists }W(x) \in \loph\text{ such that }\\&\Ups(x)[v,u] = \inp{v,W(x)u} \text{ for all }u,v \in D(Y)\}.
\end{align*}  

\begin{lem}\label{ABUps} $\mathcal{F} = D((A+B)^*)$ and $W(x) = (A+B)^*x$ for all $x \in \mathcal{F}$.
\end{lem}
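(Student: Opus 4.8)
The plan is to translate the statement into the language of the duality $\inp{\rho,x}_{\Tf,\loph} = \Tr(\rho x)$ and the defining property of the adjoint: $x \in D((A+B)^*)$ with $(A+B)^*x = y$ precisely when $\inp{(A+B)\rho, x} = \inp{\rho, y}$ for all $\rho \in D(A+B) = D(A)$. The bridge to the form $\Ups$ is Lemma \ref{ABr1}, which gives $\Tr(x((A+B)\ruv)) = \Ups(x)[v,u]$ on the rank-one operators $\ruv \in \V$, together with the identity $\Tr(x\ruv)=\inp{v,xu}$ and the cyclicity of the trace. Both inclusions are then obtained by testing against elements of $\V$ and, where necessary, extending by density.

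First I would prove $\mathcal{F} \subseteq D((A+B)^*)$ while simultaneously identifying $W(x)$. Fix $x \in \mathcal{F}$, so $\Ups(x)[v,u]=\inp{v,W(x)u}$ for all $u,v\in D(Y)$. For $\ruv\in\V$, Lemma \ref{ABr1} and cyclicity give $\inp{(A+B)\ruv, x} = \Tr(x(A+B)\ruv) = \Ups(x)[v,u] = \inp{v,W(x)u} = \Tr(\ruv\, W(x)) = \inp{\ruv, W(x)}$, so the adjoint identity $\inp{(A+B)\rho,x}=\inp{\rho,W(x)}$ holds for every $\rho\in\V$. To upgrade it to all $\rho\in D(A)$ I would invoke that $\V$ is a core for $A$ (Lemma \ref{r1coreA}) together with the $A$-boundedness of $B$ (Remark \ref{BAbdd}, Corollary \ref{Bext}): given $\rho\in D(A)$, pick $\rho_n\in\V$ with $\rho_n\to\rho$ and $A\rho_n\to A\rho$; then $B\rho_n\to B\rho$, hence $(A+B)\rho_n\to(A+B)\rho$, and passing to the limit yields $\inp{(A+B)\rho,x}=\inp{\rho,W(x)}$. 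By the definition of the adjoint this gives $x\in D((A+B)^*)$ and $(A+B)^*x = W(x)$.

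For the reverse inclusion $D((A+B)^*)\subseteq\mathcal{F}$ I would simply run the computation backwards. Let $x\in D((A+B)^*)$ and set $y:=(A+B)^*x\in\loph$, so $\inp{(A+B)\rho,x}=\inp{\rho,y}$ for all $\rho\in D(A)$. Specialising to $\rho=\ruv\in\V$ and using Lemma \ref{ABr1} on the left and $\Tr(\ruv\, y)=\inp{v,yu}$ on the right gives $\Ups(x)[v,u]=\inp{v,yu}$ for all $u,v\in D(Y)$. Hence $x\in\mathcal{F}$ with $W(x)=y=(A+B)^*x$, which finishes both the set equality and the identification of $W$.

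The main obstacle is the density step in the first inclusion: the adjoint identity is immediate only on $\V$, and its passage to all of $D(A)$ genuinely needs both that $\V$ is a core for $A$ and that $B$ is $A$-bounded, so that control of $A\rho_n$ controls $(A+B)\rho_n$. A secondary point to verify is that $W(x)$ is well-defined and uniquely determined by its values $\inp{v,W(x)u}$ on $D(Y)$; this is immediate because $W(x)\in\loph$ is bounded and $D(Y)$ is dense in $\Hb$, so two bounded operators agreeing as sesquilinear forms on $D(Y)\times D(Y)$ coincide. No further technicalities arise, since both $W(x)$ and $(A+B)^*x$ live in $\loph$.
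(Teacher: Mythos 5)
Your proposal is correct and follows essentially the same route as the paper: both directions are obtained by testing against the rank-one operators $\ruv$, invoking Lemma \ref{ABr1} to identify $\Tr(x((A+B)\ruv))$ with $\Ups(x)[v,u]$, and then using that $\V$ is a core for $A+B$ (via Lemma \ref{r1coreA} and the $A$-boundedness of $B$) to extend the adjoint identity from $\V$ to all of $D(A+B)$. Your write-up is in fact slightly more careful than the paper's at the density step, where the paper implicitly uses that a core for $A$ is a core for $A+B$; you make the role of $A$-boundedness explicit.
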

\begin{proof} We begin by showing that $W \supseteq (A+B)^*$. Let $x \in D((A+B)^*)$, $u,v \in D(Y)$. Then by Lemma \ref{ABr1}, \[\inp{v, ((A+B)^*x)u} =
\Tr(((A+B)^*x) \ruv)= \Tr(x ((A+B)\ruv)) = \Ups(x)[v,u].\] Thus $\Ups(x)$ is given by a bounded operator, namely $(A+B)^*x$ so $x \in \mathcal{F}$ and $W \supseteq (A+B)^*$.

Now let $x \in \mathcal{F}$ and $u,v \in D(Y)$. Then by Lemma \ref{ABr1}, \[\Tr(W(x) \ruv) = \inp{v,W(x) u} = \Ups(x)[v,u] = \Tr(x((A+B)\ruv)).\] So there exists $y=W(x) \in \loph$ such that $\Tr(x ((A+B)\ruv))=\Tr(y\ruv)$ for all $u,v \in D(Y)$. Since by Lemma \ref{r1coreA} we know that $\mathcal{V}$ is a core for $A+B$, it follows that $\Tr(x ((A+B)\rho))=\Tr(y\rho)$ for all $\rho \in D(A+B)$. Therefore $x \in D((A+B)^*)$ and $W(x)=y = (A+B)^*x$. So $W \subseteq (A+B)^*$.
\end{proof}

Lemma \ref{ABUps} allows us to give the following precise description of the domain of the generator when the semigroup is honest because by Theorem \ref{HT-R}, the semigroup is honest if and only if $G=\ov{A+B}$, i.e. if and only if  $G^* = (A+B)^*=W$. 
\begin{prop}\label{HonDomF} The minimal quantum dynamical semigroup $\sg{T}$ is honest if and only if $D(G^*) = \mathcal{F}$ and $G^*x=W(x)$ for all $x \in D(G^*)$.
\end{prop}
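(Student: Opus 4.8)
The plan is to reduce the stated equivalence, via Lemma \ref{ABUps}, to a single operator identity between adjoints, and then to prove the two implications separately, using Theorem \ref{HT-R} for one direction and the kernel characterisation of Proposition \ref{H-dense} for the other.

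First I would rewrite the right-hand condition. Lemma \ref{ABUps} gives $\mathcal{F} = D((A+B)^*)$ together with $W(x) = (A+B)^*x$ for every $x \in \mathcal{F}$. Hence the requirement ``$D(G^*) = \mathcal{F}$ and $G^* x = W(x)$ for all $x \in D(G^*)$'' is nothing but the operator equality $G^* = (A+B)^*$. So the proposition becomes the claim that $\sg{T}$ is honest if and only if $G^* = (A+B)^*$; recall here that honesty of $\sg{T}$ means honesty of its predual semigroup $\sg{S}$, which by Theorem \ref{QDS-K} is exactly the Kato semigroup $\sg{\St}$ of Proposition \ref{QDSKato} with generator $G$, so that Proposition \ref{H-dense} and Theorem \ref{HT-R} apply.

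For the forward implication I would invoke Theorem \ref{HT-R}: if $\sg{T}$ is honest then $G = \ov{A+B}$. Since a densely defined closable operator and its closure have the same adjoint, this yields $G^* = (\ov{A+B})^* = (A+B)^*$, as required.

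The hard part will be the reverse implication, where one must recover honesty from $G^* = (A+B)^*$. The naive route -- taking adjoints twice to obtain $G = G^{**} = (A+B)^{**} = \ov{A+B}$ -- is not available, since $\Tf$ is not reflexive, so $G^{**}$ does not live on $\Tf$. Instead I would argue through the eigenvector characterisation of Proposition \ref{H-dense}(ii), which states that $\sg{S}$ is honest precisely when $\ker(\ld-(A+B)^*) = \{0\}$ for some $\ld>0$. Since $G$ generates a substochastic $C_0$-semigroup, Theorem \ref{K-Ass} furnishes the bounded resolvent $R(\ld,G)$ for every $\ld>0$, so $\ld\in\rho(G)$; consequently $\ld-G$ is boundedly invertible and its adjoint $\ld-G^*$ is injective, giving $\ker(\ld-G^*)=\{0\}$. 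Substituting the hypothesis $G^* = (A+B)^*$ then yields $\ker(\ld-(A+B)^*) = \ker(\ld-G^*) = \{0\}$, and Proposition \ref{H-dense}(ii) delivers honesty. The only points requiring care are the standard duality fact that $\ld\in\rho(G)$ forces injectivity of $\ld-G^*$, and the observation that the complex form of Proposition \ref{H-dense}, valid for $\ld\in\C_+$, is the one in force since we are working in the complexification $\Tf$.
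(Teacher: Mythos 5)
Your proof is correct, and its skeleton matches the paper's: both reduce the stated condition via Lemma \ref{ABUps} to the single operator identity $G^* = (A+B)^*$ and then feed in Theorem \ref{HT-R}. The difference lies in the reverse implication. The paper treats the equivalence $G=\ov{A+B} \iff G^*=(A+B)^*$ as immediate; the direction you correctly flag as nontrivial is handled there implicitly by the fact (invoked explicitly later, in the proof of Proposition \ref{UniqQDSHon}, via \cite[Proposition B.10]{ABHN}) that a closed densely defined operator on a Banach space is determined by its adjoint, so $G^*=(\ov{A+B})^*$ forces $G=\ov{A+B}$. You instead bypass that duality principle entirely and argue through Proposition \ref{H-dense}(ii): since $\ld\in\rho(G)$ gives $\ker(\ld-G^*)=\{0\}$, the hypothesis $G^*=(A+B)^*$ yields $\ker(\ld-(A+B)^*)=\{0\}$ and hence honesty. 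Your route is slightly longer but more self-contained within the paper's own honesty machinery, and it sidesteps any appeal to reflexivity or to the graph-theoretic characterisation of closed operators by their adjoints; the paper's route is shorter but leans on an external functional-analytic fact that is only cited a section later. Both arguments are sound; your explicit observation that the naive $G^{**}$ argument fails on the non-reflexive space $\Tf$ is a worthwhile point that the paper does not make.
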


As a corollary, we have a characterisation of an honest quantum dynamical semigroup in terms of the form $\Ups(x)$. A similar result was proven for the case of conservativity in \cite[Proposition 3.33]{F99} and \cite[Theorem 4.1]{GQ} but with different proofs as they did not apply honesty theory results.
\begin{cor}\label{HonDom} The minimal quantum dynamical semigroup $\sg{T}$ is honest if and only if the domain of its generator $G^*$ is the space of all elements $x \in \loph$
such that the form $\Ups(x)$ on $D(Y) \x D(Y), (v,u) \mapsto \Ups(x)[v,u]$ is norm continuous.
\end{cor}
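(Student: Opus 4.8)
The plan is to read the corollary off directly from Proposition \ref{HonDomF} once we recognise that the set $\mathcal{F}$ appearing there is exactly the set of $x \in \loph$ for which the sesquilinear form $\Ups(x)$ is norm continuous on $D(Y) \x D(Y)$. Thus the entire content reduces to the set identity
\[
\mathcal{F} = \{x \in \loph : (v,u) \mapsto \Ups(x)[v,u] \text{ is norm continuous on } D(Y) \x D(Y)\},
\]
which is nothing more than the standard correspondence between bounded sesquilinear forms on a Hilbert space and bounded linear operators.

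First I would check the inclusion of $\mathcal{F}$ in the set of $x$ with $\Ups(x)$ norm continuous. If $x \in \mathcal{F}$, then by the definition of $\mathcal{F}$ there is $W(x) \in \loph$ with $\Ups(x)[v,u] = \inp{v, W(x)u}$ for all $u,v \in D(Y)$, so Cauchy--Schwarz gives $\abs{\Ups(x)[v,u]} \leq \norm{W(x)}\,\norm{v}\,\norm{u}$; hence the form is bounded, i.e.\ norm continuous. For the reverse inclusion I would invoke the density of $D(Y)$ in $\Hb$, which holds since $Y$ generates a $C_0$-semigroup. If $\Ups(x)$ is norm continuous there is a constant $C$ with $\abs{\Ups(x)[v,u]} \leq C\,\norm{v}\,\norm{u}$ on $D(Y) \x D(Y)$; since $\Ups(x)$ is sesquilinear (antilinear in $v$, linear in $u$) and $D(Y)$ is dense, it extends uniquely to a bounded sesquilinear form on all of $\Hb \x \Hb$. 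The form representation theorem then produces a unique $W(x) \in \loph$ with $\Ups(x)[v,u] = \inp{v, W(x)u}$ for all $u,v \in \Hb$, in particular on $D(Y) \x D(Y)$, so $x \in \mathcal{F}$. This establishes the identity.

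Finally I would assemble the equivalence. By Lemma \ref{ABUps} we have $\mathcal{F} = D((A+B)^*)$ and $W = (A+B)^*$, while Kato's Theorem gives $G \supseteq A+B$ and hence $G^* \subseteq (A+B)^*$. Consequently the domain equality $D(G^*) = \mathcal{F}$ already forces the full operator equality $G^* = (A+B)^*$, which is equivalent to $G = \ov{A+B}$, i.e.\ to honesty by Theorem \ref{HT-R}. Combining this with the set identity above yields exactly the assertion that $\sg{T}$ is honest if and only if $D(G^*)$ is the set of $x$ for which $\Ups(x)$ is norm continuous.

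Since the only non-formal ingredient is the elementary form representation theorem, I do not expect any genuine obstacle. The one step that needs a word of care is the backward direction, where the corollary only asserts an equality of \emph{domains} $D(G^*)=\mathcal{F}$: to recover honesty via Proposition \ref{HonDomF} one must upgrade this to the operator equality $G^*=W$, and this is precisely what the inclusion $G^* \subseteq (A+B)^* = W$ (coming from $G \supseteq A+B$) supplies.
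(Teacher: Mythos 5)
Your proof is correct and follows essentially the same route as the paper: identify $\mathcal{F}$ with the set of $x$ for which $\Ups(x)$ is norm continuous via the bounded-sesquilinear-form/bounded-operator correspondence, then invoke Proposition \ref{HonDomF}. Your extra remark that the domain equality $D(G^*)=\mathcal{F}$ already forces $G^*=W$ (because $A+B\subseteq G$ gives $G^*\subseteq (A+B)^*=W$) is a point the paper leaves implicit, and it is exactly the right way to close the backward direction.
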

\begin{proof} The form $\Ups(x)$ on $D(Y) \x D(Y)$ is norm continuous if and only if there exists an operator $W(x) \in \loph$ such that $\Ups(x)[v,u] = \inp{v,W(x)u}$ for all $u,v \in D(Y)$. So the set of all elements $x \in \loph$ such that the form $\Ups(x)$ is norm continuous is precisely $\mathcal{F}$. The result now follows from Proposition \ref{HonDomF} .
\end{proof}

Lemma \ref{ABUps} also allows us to show that honesty characterises uniqueness of semigroups satisfying \eqref{E-QDSGenT}. This result is an extension of \cite[Corollary 3.23]{F99}, which tells us that a minimal quantum dynamical semigroup which is conservative is unique.
\begin{prop}\label{UniqQDSHon} The minimal quantum dynamical semigroup $\sg{T}$ is honest if and only if it is the
unique contractive quantum dynamical semigroup on $\loph$ satisfying equation \eqref{E-QDSGenT}.
\end{prop}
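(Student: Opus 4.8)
The plan is to translate the uniqueness question for quantum dynamical semigroups solving \eqref{E-QDSGenT} into the uniqueness of substochastic semigroups on $\Tf$ whose generators extend $A+B$, and then to feed this into Theorem \ref{H-UAB}. The bridge is the identification, established in the previous section, of the predual $\sg{S}$ of the minimal semigroup $\sg{T}$ with the Kato semigroup (generator $G\supseteq A+B$), together with the definition that $\sg{T}$ is honest exactly when $\sg{S}$ is honest.

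For the forward implication, I would assume $\sg{T}$ honest and take an arbitrary contractive quantum dynamical semigroup $\sg{\Tc}$ satisfying \eqref{E-QDSGenT}. By Lemma \ref{QDSGAB} its predual $\sg{\SC}$ is a substochastic $C_0$-semigroup whose generator extends $A+B$ (this is also visible directly: specialising \eqref{E-QDSGenT} to $\rho=\ruv$ and using Lemma \ref{ABr1} gives $\SC(t)\ruv=\ruv+\int_0^t\SC(s)(A+B)\ruv\,ds$, whence $\V\subseteq D(\mathcal G)$ with $\mathcal G\ruv=(A+B)\ruv$, and $\V$ is a core for $A+B$ by Lemma \ref{r1coreA}). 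Since $\sg{S}$ is honest, Theorem \ref{H-UAB}(i) forces $\sg{\SC}=\sg{S}$, and passing to adjoints yields $\sg{\Tc}=\sg{T}$. This direction is essentially immediate once the predual correspondence is in place.

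For the converse I would argue the contrapositive: assume $\sg{T}$, equivalently $\sg{S}$, is dishonest. Theorem \ref{H-UAB}(ii) then produces infinitely many substochastic semigroups $\sg{\St}$ with generators $\Gt=G+(a_0-\ab)(\cdot)u_0$ extending $A+B$, and in our setting the parameters $u_0\in\Tsa_+$ are precisely positive trace-class operators $\rho_0$ with $\norm{\rho_0}_{tr}\le1$. The crucial extra step is to upgrade these to genuine quantum dynamical semigroups, i.e.\ to show each $\St(t)$ is completely positive. I would do this at the resolvent level: by \eqref{E-UniqRes2} one has $R(\ld,\Gt)\rho=R(\ld,G)\rho+c\inp{\Dl,\rho}R(\ld,G)\rho_0$ with $c>0$ and $\Dl\ge0$. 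Here $R(\ld,G)$ is completely positive, since $\sg{S}$ is the predual of the completely positive semigroup $\sg{T}$ and $R(\ld,G)=\int_0^\infty e^{-\ld t}S(t)\,dt$; and $\rho\mapsto\inp{\Dl,\rho}R(\ld,G)\rho_0$ is completely positive, being $\Tr(\Dl\,\cdot)$ times the fixed positive operator $R(\ld,G)\rho_0$ (a map with an evident Kraus decomposition). Hence $R(\ld,\Gt)$ is completely positive, and by the exponential (Post--Widder) formula so is each $\St(t)$. The adjoint semigroup is then a $\sigma$-weakly continuous, completely positive, contractive semigroup, i.e.\ a quantum dynamical semigroup, and because $\Gt\supseteq A+B$ it satisfies \eqref{E-LinbGen}, hence \eqref{E-QDSGenT} via Proposition \ref{GenForm}. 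Since $\Dl\ne0$ and the $\rho_0$ range over infinitely many distinct operators, these adjoint semigroups are all distinct (and distinct from $\sg{T}$), so \eqref{E-QDSGenT} has infinitely many solutions.

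The main obstacle is exactly this complete-positivity upgrade in the converse: Theorem \ref{H-UAB}(ii) only delivers \emph{positive} (substochastic) semigroups, whereas solutions of \eqref{E-QDSGenT} must be completely positive. Overcoming it requires exploiting that the free parameter $u_0$ is forced to be a positive trace-class operator and verifying complete positivity concretely through the resolvent identity \eqref{E-UniqRes2}, rather than expecting it from the abstract positivity in Theorem \ref{H-UAB}.
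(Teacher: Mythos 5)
Your proof is correct and follows the same overall route as the paper: Lemma \ref{QDSGAB} translates contractive solutions of \eqref{E-QDSGenT} into substochastic predual semigroups whose generators extend $A+B$, and Theorem \ref{H-UAB} then decides uniqueness; the forward direction is identical to the paper's. The difference is in the converse. The paper's own proof simply invokes Theorem \ref{H-UAB} to assert that dishonesty yields at least two substochastic semigroups extending $A+B$ and concludes non-uniqueness of solutions to \eqref{E-QDSGenT}; it never checks that the adjoints of the extra semigroups are \emph{completely} positive, which Definition \ref{QDSdef} requires before they count as competing quantum dynamical semigroups. You identify exactly this point and close it: from \eqref{E-UniqRes2} one reads off $R(\ld,\Gt)\rho=R(\ld,G)\rho+c\inp{\Dl,\rho}R(\ld,G)\rho_0$ with $c>0$, so $R(\ld,\Gt)^*$ is the sum of the completely positive map $R(\ld,G^*)$ and the map $x\mapsto \Tr\bigl(xR(\ld,G)\rho_0\bigr)\Dl$, which is completely positive as a positive normal functional times a fixed positive operator; the exponential formula then makes each $\St(t)^*$ completely positive, and Lemma \ref{QDSGAB} (via $\Gt\supseteq A+B$) shows these adjoint semigroups solve \eqref{E-QDSGenT}. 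This added step is not cosmetic: without it one only obtains non-uniqueness within the class of substochastic semigroups, not within the class of quantum dynamical semigroups, so your version actually proves the ``only if'' direction as stated while the paper leaves that upgrade implicit.
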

To prove Proposition \ref{UniqQDSHon} using honesty theory, we require a result describing the relationship between the generators of quantum dynamical semigroups satisfying \eqref{E-QDSGenT} and the operators $A,B$ in Kato's Theorem. 

\begin{lem}\label{QDSGAB} Suppose $\sg{\Tc}$ is a contractive quantum dynamical semigroup on $\loph$ with generator $\G^*$. Then $\sg{\Tc}$ satisfies \eqref{E-QDSGenT} for all $u,v \in D(Y)$ if and only if $\G^* \subseteq (A+B)^*$.
\end{lem}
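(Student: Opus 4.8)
The plan is to combine Proposition \ref{GenForm}, which translates the integral identity \eqref{E-QDSGenT} into a pointwise statement about the generator $\G^*$, with Lemma \ref{ABUps}, which identifies the operator associated with the form $\Ups$ as $(A+B)^*$. First I would invoke Proposition \ref{GenForm}: since $\sg{\Tc}$ is contractive by hypothesis, the semigroup satisfies \eqref{E-QDSGenT} for all $u,v \in D(Y)$ if and only if its generator satisfies the form identity \eqref{E-LinbGen}, namely $\inp{v,(\G^*x)u} = \Ups(x)[v,u]$ for all $u,v \in D(Y)$ and all $x \in D(\G^*)$. This reduces the lemma to showing that \eqref{E-LinbGen} holds if and only if $\G^* \subseteq (A+B)^*$.

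For the forward implication, suppose \eqref{E-LinbGen} holds and fix $x \in D(\G^*)$. Then the sesquilinear form $(v,u) \mapsto \Ups(x)[v,u]$ on $D(Y) \times D(Y)$ is represented by the bounded operator $\G^*x \in \loph$. By the definition of $\mathcal{F}$, this means $x \in \mathcal{F}$ with associated operator $W(x) = \G^*x$. Lemma \ref{ABUps} then gives $x \in D((A+B)^*)$ and $(A+B)^*x = W(x) = \G^*x$. As $x \in D(\G^*)$ was arbitrary, we conclude $\G^* \subseteq (A+B)^*$.

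For the converse, I would assume $\G^* \subseteq (A+B)^*$, so that every $x \in D(\G^*)$ lies in $D((A+B)^*) = \mathcal{F}$ and satisfies $\G^*x = (A+B)^*x = W(x)$. The defining property of $\mathcal{F}$ and $W$, together with Lemma \ref{ABUps}, then yields $\Ups(x)[v,u] = \inp{v, W(x)u} = \inp{v, (\G^*x)u}$ for all $u,v \in D(Y)$, which is precisely \eqref{E-LinbGen}. Applying Proposition \ref{GenForm} once more returns \eqref{E-QDSGenT}, closing the equivalence.

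The argument is essentially a chaining of two already-established results, so I do not anticipate a genuine obstacle. The one point demanding care is the bookkeeping of domains: the operator inclusion $\G^* \subseteq (A+B)^*$ must be read as $D(\G^*) \subseteq D((A+B)^*)$ together with agreement on $D(\G^*)$, and I must check that the form identity \eqref{E-LinbGen}, quantified over all $x \in D(\G^*)$, corresponds exactly to this inclusion and nothing stronger. The contractivity hypothesis enters only to license the use of Proposition \ref{GenForm}.
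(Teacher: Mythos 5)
Your proposal is correct and follows essentially the same route as the paper: reduce \eqref{E-QDSGenT} to the form identity \eqref{E-LinbGen} via Proposition \ref{GenForm}, then identify the operator representing $\Ups(x)$ with $(A+B)^*x$ via Lemma \ref{ABUps}. The paper states this more tersely, but your explicit two-direction bookkeeping is the same argument.
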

\begin{proof} By Proposition \ref{GenForm}, $\sg{\Tc}$ satisfies \eqref{E-QDSGenT} if and only if its generator $\G^*$ satisfies \eqref{E-LinbGen}. Since $\G^*x \in \loph$ for all $x \in D(\G^*)$, it follows from Lemma \ref{ABUps} that \eqref{E-LinbGen} holds if and only if $D(\G^*) \subseteq D((A+B)^*)$ and $\G^*x = (A+B)^*x$ for all $x \in D(\G^*)$.
\end{proof}

\begin{proof}[Proof of Proposition \ref{UniqQDSHon}] We begin by noting that if $S, T$ are closed and densely defined operators, then it follows from \cite[Proposition
B.10]{ABHN} that $S \subseteq T$ if and only if $T^* \subseteq S^*$. Since generators of $C_0$-semigroups are closed and densely defined, and $(A+B)^* = \ov{A+B}^*$, it follows that if $\G$ is a generator of a $C_0$-semigroup, then $\G^* \subseteq
(A+B)^*$ if and only if $A+B \subseteq \G$ . 

Now recall that we denote the generator of $\sg{T}$ by $G^*$ and suppose that there is another quantum dynamical semigroup $\sg{\Tc}$ satisfying \eqref{E-QDSGenT} with generator denoted $\G^*$. Then by Lemma \ref{QDSGAB},
both $\G^*, G^*\subseteq (A+B)^*$ and thus $A+B \subseteq \G$ and $A+B \subseteq G$. So there exist at least two substochastic predual semigroups which have generators that are extensions of
$A+B$. By Theorem \ref{H-UAB}, this occurs if and only if the minimal semigroup is dishonest. Therefore $\sg{T}$ is unique if and only if it is honest.
\end{proof}

We will conclude this section by giving an application of honesty theory to a strictly substochastic quantum dynamical semigroup. As honesty theory for the strictly substochastic case has yet to be studied in the literature, we will modify an example from \cite[Section 2]{FM} (which was used for the study of conservativity) to form a strictly substochastic semigroup. Our modification is motivated by the addition of a potential or absorption term to the generator of a substochastic semigroup in classical $L^1$ examples (see for instance, Example \ref{EgCTransp}).
\begin{eg}\label{SsQDS} We begin by looking at the example from \cite{FM}. Let $\Hb:= L^2(\R,\C)$, the space of complex-valued square-integrable functions on the real line and let the operators $Y, (L_l)_{l \in \N}$ be defined as: 
\begin{align}\label{E-SubstExQDS}
(Yu)(x)&=\frac{1}{2}\sm(x)^2 u''(x),\quad D(Y)=\{u \in \Hb\,:\,u', u'' \in \Hb\},\notag\\
(Lu)(x)&=L_1u(x)= \sm(x)u'(x),\quad D(L)=\{u \in \Hb\,:\,u' \in \Hb\},\notag\\
&\text{where }\sm(x) \text{ is a complex-valued function defined on }\R,\\
(L_lu)(x)&=0, \quad l \geq 2.\notag
\end{align} 
For simplicity, we will consider two cases, namely $\sm(x) = 1$ and $\sm(x) = -ie^{ix}$. Note that  if $\sm(x)=1$, then $Y$ is simply the Laplacian on $\R$. It can be shown that (some realisation of) $Y$ is a self-adjoint operator which generates a substochastic semigroup on $\Hb$ and moreover, these operators satisfy Premise \ref{QDShyp} with equality (see \cite{FM}). More importantly, 
\begin{prop}\label{EgUnmod}\cite[Theorem 2.1, Remark 3.5]{FM} The semigroup is honest if $\sm(x)=1$ and dishonest
if $\sm(x) = -ie^{ix}$ . 
\end{prop}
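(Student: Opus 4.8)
The plan is to read both claims through the conservativity characterisations of Proposition~\ref{QDScons} together with Corollary~\ref{HT-form}. Since the example is set up so that Premise~\ref{QDShyp} holds with \emph{equality} in \eqref{E-QDS}, honesty of the predual semigroup $\sg{S}$ coincides with conservativity of $\sg{T}$ (Proposition~\ref{HonCons}), and the positive map $Q_\ld$ of \eqref{E-Qld} controls everything: the semigroup is honest iff $\ker(\ld-\Ups)=\{0\}$ (Corollary~\ref{HT-form}), iff $Q_\ld$ has no nonzero fixed point $Q_\ld x = x$ (Lemma~\ref{kerUps}, Proposition~\ref{QDScons}(iii)), iff $Q_\ld^n(\One)\to 0$ in the $\sm$-weak topology (Proposition~\ref{QDScons}(ii)). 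The two cases are thus separated by whether $Q_\ld$ admits a nonzero fixed vector, and I would use the orbit of $\One$ for the honest case and an explicit fixed point for the dishonest case.

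For $\sm(x)=1$ the operators $Y=\tfrac12\frac{d^2}{dx^2}$, $L=\frac{d}{dx}$ and the semigroup $P(s)=e^{sY}$ all have constant coefficients, hence become multiplication operators under the Fourier transform. First I would show that $Q_\ld$ preserves the class of Fourier multipliers: if $x$ has symbol $m(\xi)$, then \eqref{E-Qld} together with $\widehat{LP(s)u}(\xi)=i\xi\,e^{-s\xi^2/2}\hat u(\xi)$ and $\int_0^\infty e^{-\ld s}\xi^2 e^{-s\xi^2}\,ds = \xi^2/(\ld+\xi^2)$ gives that $Q_\ld(x)$ has symbol $q(\xi)m(\xi)$, where $q(\xi)=\xi^2/(\ld+\xi^2)\in[0,1)$. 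Since $\One$ is the multiplier $1$, induction yields that $Q_\ld^n(\One)$ is the multiplier $q(\xi)^n$. As $q(\xi)<1$ for every finite $\xi$ and $0\le q^n\le 1$, dominated convergence shows $Q_\ld^n(\One)\to 0$ strongly, hence $\sm$-weakly, so the semigroup is honest by Proposition~\ref{QDScons}(ii). Using $\One$ (which is a multiplier) rather than attacking $\ker(\ld-\Ups)$ directly is what keeps the whole orbit inside the Fourier-multiplier class and makes the argument elementary.

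For $\sm(x)=-ie^{ix}$ the coefficients $\sm^2=-e^{2ix}$ are non-constant, the Fourier reduction fails, and I would instead exhibit a nonzero element of $\ker(\ld-\Ups)$, which by Corollary~\ref{HT-form} forces dishonesty. Formally, writing $x$ via an integral kernel $K(y,z)$, the equation $\Ups(x)[v,u]=\ld\inp{v,xu}$ becomes a second-order PDE whose principal part is $\tfrac12\bigl(\overline{\sm(y)}\,\partial_y+\sm(z)\,\partial_z\bigr)^2 K$; for $\sm\equiv 1$ this is $\tfrac12(\partial_y+\partial_z)^2 K$, whose only bounded solutions along the characteristic $y+z$ grow exponentially and force $K=0$, recovering honesty. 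The natural change of variables $w=\int^{x}\sm^{-1}=-e^{-ix}$ (the scale function appearing in Feller's explosion test for the associated one-dimensional diffusion) straightens these vector fields into a constant-coefficient Laplacian, but now on the \emph{bounded} curve traced by $w$. It is exactly this boundedness of the image, i.e.\ that the diffusion escapes to the boundary in finite time, that produces a bounded nonzero solution $x_0$ of $\Ups(x_0)=\ld x_0$, whereas for $\sm\equiv 1$ the scale variable ranges over all of $\R$ and only the zero solution survives.

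The main obstacle is the dishonest case: extracting the explicit solution $x_0$ from the transformed ODE and verifying \emph{rigorously} that it defines a genuine bounded operator on $L^2(\R,\C)$, rather than a merely formal kernel, and that it satisfies the form eigenvalue equation on all of $D(Y)\x D(Y)$. This verification is precisely the analytic content of \cite[Theorem~2.1, Remark~3.5]{FM}; once it is granted, Corollary~\ref{HT-form} immediately delivers dishonesty. The honest case, by contrast, reduces to the routine Fourier-multiplier computation above after invoking Proposition~\ref{QDScons}(ii).
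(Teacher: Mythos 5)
The paper offers no proof of this proposition at all --- it is imported wholesale from \cite[Theorem 2.1, Remark 3.5]{FM} --- so any genuine argument you give is necessarily a different route. Your treatment of the honest case $\sm(x)=1$ is correct and essentially complete: reducing to Proposition \ref{QDScons}(ii) via Proposition \ref{HonCons}, observing that $Q_\ld$ maps the Fourier multiplier with symbol $m(\xi)$ to the multiplier with symbol $\xi^2 m(\xi)/(\ld+\xi^2)$, and concluding that $Q_\ld^n(\One)\to 0$ strongly (hence, by the uniform bound $\norm{Q_\ld^n(\One)}\leq 1$ and density of the finite-rank operators in $\Tf$, $\sm$-weakly) is a clean, self-contained verification that the paper does not supply. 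The dishonest case is where the substance lies, and there your argument is only a heuristic: the scale-function change of variables $w=-e^{-ix}$ and the analogy with Feller's explosion test correctly explain \emph{why} a bounded nonzero solution of $\Ups(x_0)=\ld x_0$ should exist, but you never produce one, never verify that it lies in $\loph$ and satisfies the form identity on all of $D(Y)\x D(Y)$, and you explicitly concede that this verification is the analytic content of \cite{FM}. That concession puts you in exactly the same position as the paper, which also simply cites \cite{FM}; so your proposal is acceptable as a proof modulo the same external input the paper relies on, with the bonus that the honest half is now actually proved rather than quoted. For a fully self-contained argument you would still need to carry out the kernel computation for $\sm(x)=-ie^{ix}$ and exhibit the fixed point of $Q_\ld$ (equivalently the nonzero element of $\ker(\ld-\Ups)$) explicitly.
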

More generally, if $\sm$ is a real-valued bounded smooth function in $\R$ with bounded derivatives of all orders or if $\sm$ is multiplied by a complex phase independent of $x$, then the minimal quantum dynamical semigroup constructed from $Y$ and $L$ above is conservative (or honest) \cite[Remark 3.5]{FM}. Such semigroups occur in the dilation and quantum extension of classical diffusion processes on $\R$.  

We use a simple modification in order to obtain the strictly substochastic example. First consider the Schr\"{o}dinger operator, defined formally by $S_K:=\frac{1}{2}\Delta-K$ for some measurable function $K$. This operator is related to diffusion processes with absorption. Under some additional conditions, the Schr\"{o}dinger operator is a self-adjoint operator generating a substochastic semigroup in $\Hb$ (see \cite{V86} for example). We will
consider the case when $K$ is a strictly positive, bounded, real-valued function and we will also let $K$ denote the operator of multiplication with this function. We now define
\[(Y_Ku)(x):=\frac{1}{2}\sm(x)^2 u''(x)-K(x)u(x),\quad D(Y_K)=\{u \in \Hb\,:\,u', u'' \in \Hb\}\]
and leave $(L_l)_{l \in \N}$ as defined in \eqref{E-SubstExQDS}. 

Once again, we consider the two cases above, namely $\sm(x) = 1$ and $\sm(x)=-ie^{ix}$. Since $K$ is multiplication with a strictly positive function, $Y_K$ and $(L_l)_{l \in \N}$ satisfy \eqref{E-QDS} with a strict inequality. Hence the minimal quantum dynamical semigroup associated with $Y_K,
(L_l)_{l \in \N}$ is strictly substochastic. 

From Section \ref{S-QDSKato}, we know that the formal operators 
\begin{align*}
A_K\rho&:=Y_K\rho+\rho Y_K^*=Y\rho+\rho Y^*- (K\rho+\rho K), \quad \rho \in D(A) \text{ and }\\
B\rho &:= L_1 \rho L_1, \quad \rho \in D(B)
\end{align*}
satisfy Kato's Theorem. Since the operator $K$ is bounded, so is the operator $\K:\Tf \to \Tf$, $\K\rho:= K\rho+\rho K$. Hence we may apply our results relating potentials and honesty from Section \ref{S-HonPot} to derive some results on the honesty of this semigroup. In particular, combining Proposition \ref{HonPot} and Proposition \ref{DishonPot} with Proposition \ref{EgUnmod}, it follows that
\begin{prop}
The minimal quantum dynamical semigroup associated with $Y_K$ and $L$ is honest for the case $\sm(x)=1$ and dishonest for the case $\sm(x)=-ie^{ix}$.  
\end{prop}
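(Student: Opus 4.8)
The plan is to realise $\sg{V_K}$ as the Kato semigroup obtained from $\sg{V}$ by adjoining the bounded potential $\K\rho=K\rho+\rho K$, and to transport honesty (resp.\ dishonesty) from the unmodified semigroup of Proposition \ref{EgUnmod} using the potential-preservation results of Section \ref{S-HonPot}. First I would record the setup already indicated in the example: on $D(A)$ one has $A_K=A-\K$ with $\K$ bounded, so $D(A_K)=D(A)$, and $A_K,B$ satisfy the hypotheses of Kato's Theorem. For $\sm(x)=1$, where $\sg{V}$ is honest, the aim is to apply Proposition \ref{HonPot}: since $\K$ is bounded its domain is all of $\Tsa$, so the requirement that $(D(A)\cap D(\K))_+$ be graph-norm dense in $D(A)_+$ is automatic, and the proposition would return honesty of $\sg{V_K}$. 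For $\sm(x)=-ie^{ix}$, where $\sg{V}$ is dishonest, the aim is to apply Proposition \ref{DishonPot} to conclude that $\sg{V_K}$ is dishonest.

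Before invoking these results I would isolate the one structural fact that makes the potential tractable. Although $\K$ is built from the positive multiplication operator $K$, the property actually used is that $-\K$ generates a positive substochastic semigroup, namely $\rho\mapsto e^{-tK}\rho\,e^{-tK}$, which is completely positive and, since $e^{-tK}\le\One$, trace non-increasing on $\Tsa_+$. With this observation the dishonest case is clean, and I would prove it directly through Theorem \ref{H-UAB}(ii): from dishonesty of $\sg{V}$ select the infinitely many substochastic semigroups with generators $G_\ap\supseteq A+B$; each $G_\ap-\K$ again generates a substochastic semigroup, by the Trotter product formula as a strong limit of products of the positive substochastic semigroups generated by $G_\ap$ and by $-\K$, and $G_\ap-\K\supseteq(A+B)-\K=A_K+B$. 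Hence $A_K+B$ has infinitely many substochastic extensions and Theorem \ref{H-UAB}(ii) forces $\sg{V_K}$ to be dishonest.

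The main obstacle, and the step I would scrutinise most carefully, is the positivity hypothesis on the potential in the honest case. The engine of Proposition \ref{HonPot} is the resolvent domination $R(\ld,A_K)\le R(\ld,A)$, read from $R(\ld,A)-R(\ld,A_K)=R(\ld,A_K)\,\K\,R(\ld,A)$ once $\K$ is known to be cone-positive. The difficulty is that $\K$ is \emph{not} cone-positive on $\Tsa$: for a positive rank-one $\rho=\roo{\psi}{\psi}$ the operator $K\rho+\rho K$ is in general not positive semidefinite, so the perturbed free evolution is not dominated by the original, $\roo{P_K(t)\psi}{P_K(t)\psi}\not\le\roo{P(t)\psi}{P(t)\psi}$, because $P_K(t)\psi$ and $P(t)\psi$ need not be parallel (here $P_K(\cdot)$ is the contraction semigroup generated by $Y_K$). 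Thus Proposition \ref{HonPot} is not applicable verbatim and the domination route collapses. To recover the honest case I would instead pass to the form and spectral characterisations: by Corollary \ref{HT-form}, honesty of $\sg{T}$ (in the perturbed system) is equivalent to $\ker(\ld-\Ups_K)=\{0\}$ with $\Ups_K(x)[v,u]=\Ups(x)[v,u]-\inp{v,xKu}-\inp{Kv,xu}$, while by Theorem \ref{HT-dual} dishonesty is equivalent to the existence of a nonzero positive subeigenvector of $(BR(\ld,A_K))^*$. The genuine work is then to show that adjoining a \emph{bounded} killing term to the conservative $\sm(x)=1$ dynamics cannot manufacture such a subeigenvector — the operator-theoretic shadow of the fact that Brownian motion with bounded killing does not explode — and carrying out this comparison without the (false) operator inequality is where the real difficulty lies.
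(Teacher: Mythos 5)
Your starting point is exactly the paper's proof: the statement is justified there in one line by combining Propositions \ref{HonPot} and \ref{DishonPot} with Proposition \ref{EgUnmod}, taking the potential to be $\K\rho=K\rho+\rho K$. Your objection to this is well taken and, as far as I can see, correct: both propositions hypothesise a potential that is \emph{positive} in the sense of Section \ref{SS-K-ASS}, i.e.\ cone-preserving, and $\K$ is not cone-preserving on $\Tsa$ unless $K$ is a scalar. Indeed for $\rho=\roo{\psi}{\psi}$ one has $\inp{\phi,(K\rho+\rho K)\phi}=2\Real\left(\inp{\psi,\phi}\inp{\phi,K\psi}\right)$, which changes sign for suitable $\phi$ whenever $K\psi$ is not parallel to $\psi$; consequently the resolvent domination $R(\ld,A_K)\le R(\ld,A)$ that drives the proof of Proposition \ref{HonPot} is not delivered by that argument (the integrand $P_K(t)\rho P_K(t)^*-P(t)\rho P(t)^*$ is itself indefinite, since $\roo{w}{w}-\roo{u}{u}\ge 0$ forces $u$ and $w$ to be parallel). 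Your treatment of the dishonest case is a genuine improvement: you isolate the property that is actually available --- not cone-positivity of $\K$ but substochasticity of $e^{-t\K}\rho=e^{-tK}\rho\,e^{-tK}$ --- and with it the Trotter product argument shows that each $G_\ap-\K$ generates a substochastic semigroup extending $A_K+B$, so Theorem \ref{H-UAB} yields dishonesty of $\sg{V_K}$ for $\sm(x)=-ie^{ix}$. That half of the statement is fully proved by your argument, and more carefully than in the paper.

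The gap is that you do not prove the other half. For $\sm(x)=1$ you correctly observe that Proposition \ref{HonPot} cannot be invoked verbatim and you point to Corollary \ref{HT-form} and Theorem \ref{HT-dual} as the right replacement, but you stop at naming the difficulty (``show that a bounded killing term cannot manufacture a positive subeigenvector'') without carrying it out. Neither the operator inequality $R(\ld,A_K)\le R(\ld,A)$ nor the weaker, and sufficient, statement $\lim_{n\to\infty}\norm{(BR(\ld,A_K))^n\rho}_{tr}=\inp{\rho,Q_{K,\ld}^n(\One)}\to 0$ is established; the pointwise kernel domination $0\le P_K(t)\psi\le P(t)\psi$ for $\psi\ge 0$ does not control $\norm{LP_K(t)\psi}$ by $\norm{LP(t)\psi}$, so there is no obvious comparison of $Q_{K,\ld}^n(\One)$ with $Q_\ld^n(\One)$ either. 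As written, your proposal therefore proves only the dishonesty assertion; the honesty assertion for $\sm(x)=1$ remains open, and you should flag that the paper's own one-line justification of that assertion rests on applying Proposition \ref{HonPot} outside its stated hypotheses, so either that proposition must be strengthened to cover potentials $\K$ for which $-\K$ generates a substochastic semigroup, or the honest case must be argued directly (for instance by verifying one of the criteria of Proposition \ref{QDScons} for $Y_K,L$ along the lines of the conservativity proof in the cited work of Fagnola and Mora).
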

Therefore, if $\sm(x)=1$ (or more generally, $\sm$ is a real-valued, bounded, smooth function in $\R$ with bounded derivatives of all orders), then the minimal
semigroup is the unique semigroup satisfying \eqref{E-QDSGenT} (Proposition \ref{UniqQDSHon}). Moreover, by Corollary \ref{HonDom}, in this case we have a precise description of the domain of the generator of the minimal semigroup, namely $D(G^*)$ is given by all elements $x \in \loph$ such that the form $\Ups(x)$ on $D(Y) \x D(Y), (v,u) \mapsto \Ups(x)[v,u]$ is norm
continuous. 
\end{eg}

\bibliographystyle{plain}	
\bibliography{ref}


\end{document}